\declaretheorem{Fact}
\DeclareMathOperator{\argmin}{argmin}
\definecolor{gold}{rgb}{0.85,0.65,0}
\numberwithin{subsection}{section}
\numberwithin{theorem}{section} 
\numberwithin{lemma}{section} 
\numberwithin{definition}{section} 
\numberwithin{proposition}{section} 
\numberwithin{lemma}{section} 
\numberwithin{corollary}{section} 
\numberwithin{Fact}{section} 
\numberwithin{equation}{section}
\let\emptyset\varnothing
\def\R{{\mathbb{R}}}
\def\cC{{\cal C}}
\def\cD{{\cal D}}
\def\cJ{{\cal J}}
\def\cO{{\cal O}}
\def\cU{{\cal U}}
\def\1{{\bm 1}}
\newcommand{\dist}{\operatorname{dist}}
\DeclareMathOperator*{\argmax}{arg\,max}
\DeclareMathOperator{\diag}{diag}
\DeclareMathOperator{\conv}{conv}
\begin{document}
\title{ \textbf{Computing Competitive Equilibrium for Chores: \\ Linear Convergence and Lightweight Iteration} }
\titlerunning{Computing CE for Chores}
%
\author{He Chen\inst{1} \and
Chonghe Jiang\inst{2} \and
Anthony Man-Cho So\textsuperscript{(\Letter)}\inst{3} }
\authorrunning{H. Chen et al.}
%
\institute{The Chinese University of Hong Kong, Hong Kong SAR, China \\
\email{hechen@link.cuhk.edu.hk}\\ \and 
The Chinese University of Hong Kong, Hong Kong SAR, China \\
\email{chjiang@link.cuhk.edu.hk}  \\ \and
The Chinese University of Hong Kong, Hong Kong SAR, China \\
\email{manchoso@se.cuhk.edu.hk}
}
\maketitle              
\begin{abstract}
 Competitive equilibrium (CE) for chores has recently attracted significant attention, with many algorithms proposed to approximately compute it. However, existing algorithms either lack iterate convergence guarantees to an exact CE or require solving high-dimensional linear or quadratic programming subproblems. This paper overcomes these issues by proposing a novel unconstrained difference-of-convex formulation, whose stationary points correspond precisely to the CE for chores. We show that the new formulation possesses the local error bound property and the Kurdyka-{\L}ojasiewicz property with an exponent of $1/2$. Consequently, we present the first algorithm whose iterates provably converge linearly to an exact CE for chores. Furthermore, by exploiting the max structure within our formulation and applying smoothing techniques, we develop a subproblem-free algorithm that finds an approximate CE in polynomial time. Numerical experiments demonstrate that the proposed algorithms outperform the state-of-the-art method.

\keywords{Competitive equilibrium for chores  \and Linear convergence \and Lightweight method \and Error bound}
\end{abstract}
\section{Introduction}
The competitive market equilibrium problem, which dates back to the nineteenth century, was initially proposed by Leon Walras \cite{walras2014leon,orlin2010improved}. This problem involves a market allocating $m$ items among $n$ agents with the objective of achieving both efficiency and fairness. Walras gave some principles of an ideal status for such a market but did not discuss its existence. In the seminal work of Arrow and Debreu \cite{arrow1954existence}, they proved that under the concavity of the utility functions of agents, there exists a group of allocations and prices meeting Walras's requirements, which is called \textit{competitive equilibrium} (CE) and satisfies (i) for each agent, the total price/payments of collected items equals his budget/expected amount; (ii) every agent gets his optimal bundle; (iii) each item is allocated.

In the classical Fisher market \cite{brainard2005compute}, which is a special case of Walras's model, the items are goods, and agents spend money to buy them. In this context, computing CE is equivalent to finding the Karush-Kuhn-Tucker (KKT) points of certain convex programs, such as Eisenberg-Gale program \cite{eisenberg1959consensus,gale1989theory} and Shmyrev’s program \cite{shmyrev2009algorithm,birnbaum2011distributed}. Various algorithms have been developed to leverage the specific structures of these convex programs to compute CE efficiently, particularly for linear utilities. Examples include the interior point method \cite{jain2007polynomial,ye2008path}, combinatorial methods \cite{chaudhury2018combinatorial,devanur2008market,duan2015combinatorial}, and first-order methods \cite{birnbaum2011distributed,gao2020first,nan2024convergence}. 

In the early 2010s, Budish \cite{budish2011combinatorial} considered another interesting case, where the items are chores and agents are paid for doing them; for example dividing job shifts among workers, papers among reviewers, and teaching load among faculty. Budish's work has sparked intense research on the properties of CE for chores in the past ten years. As it turns out, computing CE for chores is much more challenging than for goods. First, CE for chores cannot be captured by a convex program since the set of CE can be nonconvex and disconnected  \cite{bogomolnaia2017competitive,bogomolnaia2019dividing}. In contrast to the convex programs in the goods setting, Bogomolnaia et al. \cite{bogomolnaia2017competitive} proposed a nonconvex Eisenberg-Gale-like program whose KKT points correspond to CE for chores. Second, the nonconvex program for CE for chores has implicit open constraints to ensure each agent is allocated chores. To compute a CE for chores, one needs to find a KKT point that lies in the interior of the domain. Unfortunately, it is shown by Bogomolnaia et al. \cite{bogomolnaia2017competitive} that first-order methods typically converge to the boundary, leading to undesirable allocations. Finally,  Chaudhury and Mehlhorn \cite{chaudhury2018combinatorial} showed the PPAD-hardness of computing a CE for chores in the exchange model, which is in sharp contrast to the strong polynomial time algorithms in
the goods setting \cite{garg2019strongly}.  

Despite these difficulties, there is a growing interest in designing algorithms that compute an approximate CE for chores in polynomial time. 
 Chaudhury and Mehlhorn \cite{chaudhury2018combinatorial} proposed a combinatorial algorithm to compute an $\epsilon$-approximate CE with equal income (CEEI) in $\tilde{\cO}(nm/\epsilon^2)$ iterations with time complexity of $\tilde{\cO}(n^4m^2/\epsilon^2)$. However, this algorithm faces some numerical issues, i.e., the bit-length of the prices and the allocation is conjectured to grow exponentially with the iterations \cite{duan2015combinatorial}, which undermines its practicality \cite{chaudhury2024competitive}. Later, Boodaghians et al. \cite{boodaghians2022polynomial} designed a practical exterior-point method (EPM), which addressed the open constraint issue by keeping iterates away from the boundary. To find an $\epsilon$-CE, the EPM theoretically requires to solve $\tilde{\cO}(n^3/\epsilon^2)$ QPs.  More recently,  Chaudhury et al. \cite{chaudhury2024competitive} made remarkable progress. They developed a novel nonlinear program that eliminates implicit open constraints, with each KKT point corresponding to a CE for chores. A greedy Frank-Wolfe (GFW) method was used to find an $\epsilon$-CE in $\tilde{\cO}(n/\epsilon^2)$ iterations. Taking advantage of the new formulation and efficient solvers for LP, the GFW method outperforms previous algorithms both theoretically and empirically. 

Though the EPM and GFW methods perform well in practice and have well-established complexity bounds to compute an approximate CE for chores, they face two significant limitations: \textbf{(L1)} As noted by Chaudhury et al. \cite{chaudhury2024competitive}, even when $\epsilon$ is very small, an $\epsilon$-CE can still be far from an exact CE, making it unsatisfactory in some cases. However, none of the existing algorithms ensure iterates " move to a 'nearby' exact CE" in the chores setting. This contrasts with the classical goods setting, where the distance between iterates and CE can be shown to converge linearly to zero (see, e.g., \cite[Theorem 2.1]{gao2020first}). \textbf{(L2)} Both the EPM and GFW methods require solving high-dimensional QPs or LPs as subproblems. Since off-the-shelf solvers usually adopt interior point methods, solving a subproblem requires up to $\cO((mn)^{3.5})$ time \cite{vavasis1996primal}. This limits the performance of these methods in large-scale settings.

\paragraph{Our Contributions.} In this paper, we consider the problem of computing CE for chores and design two algorithms to resolve the above limitations. To begin, invoking the formulation developed in \cite{chaudhury2024competitive}, we present a novel unconstrained \textit{difference-of-convex} (DC) formulation, whose stationary points have a one-to-one correspondence to CE for chores.  One of our main contributions is to prove that this DC problem possesses the Kurdyka-{\L}ojasiewicz (K{\L}) property with an exponent of $1/2$. This result is nontrivial as it is rooted in the special structure of the DC formulation and places the problem of computing CE for chores under the classical framework of minimizing K{\L} functions with exponent $1/2$. More importantly, leveraging this property, we show for the first time that the difference-of-convex algorithm (DCA) provably achieves iterate convergence to a stationary point, i.e., an exact CE for chores, at a linear rate, thereby addressing limitation {(L1)}. In each iteration, the DCA subproblem is a convex QP with simplex constraints, which can be solved by first-order methods (e.g., projected gradient descent (PGD) or mirror descent) efficiently.

Towards the goal of light computation and addressing {(L2)}, we further design a \textit{smoothed gradient method with rounding} (SGR) based on the DC formulation. By smoothing the $\max$ terms of the objective function via entropy regularization, we derive a nonconvex smooth approximation problem for the DC formulation. The SGR employs a simple gradient descent method with a novel rounding procedure for this approximation problem,  yielding an $\epsilon$-CE in $\tilde{\cO}({m^2}/{\epsilon^3})$ iterations. While the iteration complexity is higher than existing methods, the SGR only requires computing a gradient and rounding the iterate in each iteration, which can be done in $\cO(nm)$ and $\cO(m^2)$ arithmetic operations, respectively. Compared with the EPM and GFW methods that solve a QP or LP with $mn$ variables or constraints, our SGR involves significantly cheaper computation per iteration, making it more practical for large-scale settings.  

We remark that both SGR and DCA crucially rely on the DC formulation. They adopt different approximations for the objective function, leading to their distinct advantages.\footnote{One may ask whether we can combine the advantages of DCA and SGR to give an algorithm addressing both (L1) and (L2). Intuitively, this is difficult since algorithms that converge linearly to stationary points, such as the DCA, PGD, and proximal point method, usually need to solve subproblems. Still, this problem is interesting and we leave it for future work.} Specifically, the DCA majorizes the concave smooth part of the objective function via linearization while the SGR smoothes the convex nonsmooth part. Due to their high-level similarity, the rounding procedure of SGR can be applied to the DCA to derive an iteration complexity bound of $\tilde{\cO}(m/\epsilon^2)$ for finding an $\epsilon$-CE (comparable to the rate of GFW), without compromising the iterate convergence property.

Finally, we demonstrate the practicality and efficiency of our algorithms via numerical experiments. We generate the data from various distributions and evaluate the CPU time needed by different algorithms to find an approximate CE for chores. Numerical results show that when agents and chores are equal in number, i.e., $m=n$, the CPU time of DCA and SGR is significantly lower than that of GFW. Moreover, the CPU time of SGR increases very slightly compared with the GFW method as $n$ grows, which is consistent with our theory. For the case where $n\gg m$, the DCA (resp. SGR) can be $7$ (resp. $50$) times faster than the GFW method. The competitive numerical performance of our algorithms, as well as their theoretical advantages, demonstrate the power of our approach.

\paragraph{Further Related Work.} The market equilibrium problem has many variants addressing different settings. These variants include the auction markets \cite{conitzer2022pacing}, stochastic online Fisher markets \cite{jalota2022stochastic,balseiro2021regularized}, and Fisher markets with extra constraints meeting fairness or capacity requirements \cite{jalota2023fisher,peysakhovich2023implementing}. Generally, these equilibrium problems can be formulated as convex programs and solved via convex optimization methods. The techniques used in this paper, e.g., the smoothing strategy, error bound, and K{\L} property, may also find applications to these problems.

We then review the technical tools used in this paper. We begin with the smoothing method, which was initially proposed by Nesterov \cite{nesterov2005smooth}. The main idea is to find a smooth approximation of the structured nonsmooth terms of the objective function in convex optimization. Consequently, instead of dealing with a nonsmooth problem, one only needs to solve a smooth convex problem that can be efficiently solved by the gradient descent method. This idea is particularly useful in convex optimization; see, e.g., \cite{yang2022auc,chambolle2011first,lin2020near,zhao2023primal} and references therein for its various applications. Moreover, this technique can be generalized to nonconvex optimization; see the recent progress of Zhao \cite{zhao2023primal}. However, the smoothing method does not offer any iterate convergence guarantee. To derive a convergence guarantee for iterative algorithms, a prototypical approach is to develop a suitable error bound condition for the targeted problem, which bounds the distance of vectors
in a test set to a given set by a residual function.  Luo and Tseng \cite{luo1993error} gave an example of the error bound (Luo-Tseng error bound) and developed the linear convergence of a host of iterative methods based on it. Later, Zhou and So \cite{zhou2017unified} presented a unified approach to obtaining error bounds for structured convex problems, giving iterate convergence results for many modern applications. Another popular approach to establishing iterate convergence is through the K{\L} property \cite{bolte2010characterizations,attouch2013convergence}, which stipulates that the growth of a function is locally bounded by the norm of the function's gradient with a certain exponent. The K{\L} property is closely related to the error bound property. In particular, the K{\L} property with exponent $1/2$ can be implied by Luo-Tseng error bound property \cite{li2018calculus} in the convex setting. A typical approach to obtaining linear convergence results for various iterative methods is to first establish an error bound and then prove the K{\L} property with exponent $1/2$ for the problem at hand \cite{liu2019quadratic,wang2023linear}. It is worth noting that error bounds have been proven for some convex formulations of the Fisher market \cite{nan2024convergence,gao2020first}. In comparison, the error bound we establish is for a nonconvex formulation of the chores market and requires far more effort in analyzing the local property.

\paragraph{Organization.} This paper is organized as follows. Sec. \ref{sec:CE} reviews the formal definition of CE (for chores). Sec. \ref{sec:formulation} presents a new DC formulation for determining CE and establishes its local error bound property and K{\L} property with exponent $1/2$. In Sec. \ref{sec:dca}, we propose a DCA to compute CE, whose iterates converge linearly to a CE. In Sec. \ref{sec:sgr}, we propose a smoothed gradient method with a rounding procedure and develop its non-asymptotic rate for computing an approximate CE. In Sec. \ref{sec:numerical}, we report numerical results to show the superior performance of our algorithms in practice. Finally, we give some closing remarks in Sec. \ref{sec:conclusion}.

\paragraph{Notation.} The notation used in this paper is mostly standard. We use $[m]$ to denote the integer set $\{1,2,\ldots,m\}$ and $\|\cdot\|$ to represent the standard Euclidean norm. We use $e_j$ to denote the $j$-th standard basis vector in $\R^m$. Let $\mu\in\R^m$ be a vector. The distance from the point $\mu$ to a set $\cD\subseteq\R^m$ is defined by $\dist(\mu,\cD)\coloneqq \min_{y\in\cD}\ \|\mu-y\|$, and the projection of $\mu\in\R^m$ onto $\cD\subseteq\R^m$ is defined by $\Pi_{\cD}(\mu)\coloneqq \argmin_{y\in\cD}\ \|\mu-y\|$. The sum of sets $\cC,\cD\subseteq\R^m$ is defined by $\cC+\cD\coloneqq\{x+y:x\in\cC,y\in\cD\}$.

\section{Preliminary: Definition of CE for Chores}\label{sec:CE}
As previously discussed, a CE satisfies three fundamental conditions on agents' expected returns, optimal allocations, and item clearance. We now provide a formal depiction of CE within the context of chores.

Consider allocating $m$ divisible chores among $n$ agents, where agent $i$ has a disutility of $d_{ij}$ ($d_{ij}>0$) for chore $j$ per unit. To facilitate this allocation, agents engaging in each chore $j$ are paid at a price of $p_j$ per unit. Each agent $i$ seeks to earn an expected amount $B_i$ ($B_i>0$) from participating in these chores while simultaneously minimizing their total disutility. Let $x_i\in\R^{m}_+$ denote the allocation vector of agent $i$ and $x_{ij}$ denote the $j$-th component of $x_i$, i.e., the amount of chore $j$ allocated to agent $i$. CE for chores is defined as follows; see, e.g., \cite[Definition 1]{chaudhury2024competitive}.
\begin{definition}[Competitive Equilibrium for Chores]\label{def:CE}
We say that a price $p\in\R^m_+$ and an allocation $x\in\R^{n\times m}_+$ satisfy competitive equilibrium (CE) if and only if
\begin{enumerate}[label={{\rm (E\arabic*).}}, leftmargin=*]
    \item $p^{\top}x_i=B_i$ for all $i\in[n]$; 
    \item $d_i^{\top}x_i\leq d_i^{\top}y_i$ for all $y_i\in\R^m_+$ such that $p^{\top}y_i\geq p^{\top}x_i$, for all $i\in[n]$;
    \item $\sum_{i\in[n]}x_{ij}=1$ for all $j\in[m]$.
\end{enumerate}
\end{definition}
Condition (E2) says that agent $i$ minimizes his disutility under his expected amount. This is equivalent to that agent $i$ only chooses chores from the set $\argmax_j\{\frac{p_j}{d_{ij}}\}$. 
\begin{Fact}\label{le:e2}
    Condition {\rm (E2)} is equivalent to: For all $i\in[n]$,
    \begin{equation}\label{eq:e2}
        \frac{p_j}{d_{ij}}\geq \frac{p_{j^{\prime}}}{d_{ij^{\prime}}}\text{ for all } j\in\{j:x_{ij}>0\}\text{ and } j^{\prime}\in[m].
    \end{equation}
\end{Fact}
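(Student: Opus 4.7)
The plan is to exploit the linear-programming structure of (E2). Observe that (E2) states exactly that $x_i$ solves the LP $\min\{d_i^\top y_i : y_i \in \R^m_+,\ p^\top y_i \geq p^\top x_i\}$, whose optimal strategy is to concentrate mass on chores that maximize the ``bang-per-buck'' ratio $p_j/d_{ij}$; condition~\eqref{eq:e2} is the complementary-slackness statement that $x_i$ is supported only on such best chores. I will prove the two implications separately.

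For (E2) $\Rightarrow$ \eqref{eq:e2}, I would argue by contradiction. Suppose that $x_{ij}>0$ and $p_{j'}/d_{ij'} > p_j/d_{ij}$ for some $j,j'\in[m]$; this forces $p_{j'}>0$. If $p_j=0$, (E2) is already contradicted by simply replacing $x_{ij}$ with $0$, which preserves $p^\top x_i$ while strictly decreasing $d_i^\top x_i$. Otherwise, I construct a budget-neutral perturbation $y_i$ by shifting a small mass from $j$ to $j'$: set $y_{ij} = x_{ij} - \epsilon$, $y_{ij'} = x_{ij'} + \epsilon p_j/p_{j'}$, and $y_{i\ell} = x_{i\ell}$ for $\ell\notin\{j,j'\}$, with $\epsilon \in (0, x_{ij}]$. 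A direct computation yields $p^\top y_i = p^\top x_i$ while $d_i^\top y_i - d_i^\top x_i = \epsilon(d_{ij'} p_j/p_{j'} - d_{ij}) < 0$ by the assumed strict inequality, contradicting (E2).

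For \eqref{eq:e2} $\Rightarrow$ (E2), let $\alpha \coloneqq \max_{j'\in[m]} p_{j'}/d_{ij'}$. By hypothesis every $j$ in the support of $x_i$ satisfies $p_j/d_{ij} = \alpha$, so $d_i^\top x_i = \alpha^{-1} p^\top x_i$ (when $\alpha>0$). For any feasible $y_i \in \R^m_+$ with $p^\top y_i \geq p^\top x_i$, the componentwise bound $d_{ij'} \geq \alpha^{-1} p_{j'}$ (valid for every $j'\in[m]$ by the definition of $\alpha$) then gives $d_i^\top y_i \geq \alpha^{-1} p^\top y_i \geq \alpha^{-1} p^\top x_i = d_i^\top x_i$, which is precisely (E2). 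I do not anticipate a substantive obstacle; the argument is essentially a textbook LP-optimality proof, and the only mild care required concerns edge cases with vanishing prices, which are inconsistent with the equilibrium context in which (E1) forces $p\neq 0$.
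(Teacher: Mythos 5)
Your proof is correct and is precisely the standard individual-optimization ("bang-per-buck") argument that the paper itself invokes but omits, citing the analogous goods-case proof: the budget-neutral mass-shift perturbation for (E2) $\Rightarrow$ \eqref{eq:e2} and the aggregate bound $d_i^\top y_i \geq \alpha^{-1}p^\top y_i$ for the converse. The degenerate case $p=0$ that you flag is indeed immaterial here, since in every application of the Fact the prices are nonzero (and strictly positive wherever the reverse direction is used), so your treatment suffices.
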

\begin{proof}
    The proof is similar to that of the case of the goods (see, e.g., \cite[Sec. 3.1]{jalota2022stochastic}), which is based on the individual optimization problem. We omit it for brevity.
\end{proof}
Fact \ref{le:e2} ensures that $p>0$ for all CE $(p,x)$. To see this, notice that for each $j\in[m]$, there exists an $i$ such that $x_{ij}>0$ by (E3). Then $p_j>0,j\in[m]$ follows from \eqref{eq:e2} and $p\in\R^m_+\setminus\{0\}$. 
\begin{corollary}\label{co:p}
    If $(p,x)$ is a CE, then $p_j>0$ for $j\in[m]$.
\end{corollary}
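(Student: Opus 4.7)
The plan is to establish strict positivity of each price coordinate by chaining the three CE conditions together, using Fact \ref{le:e2} as the main leverage. First I would fix an arbitrary chore index $j \in [m]$ and invoke condition (E3), which guarantees $\sum_{i \in [n]} x_{ij} = 1$. Since the allocations are nonnegative, there must exist some agent $i^\ast$ with $x_{i^\ast j} > 0$, placing $j$ inside the set $\{j' : x_{i^\ast j'} > 0\}$ appearing in the hypothesis of Fact \ref{le:e2}.

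Next, I would show that $p$ cannot be the zero vector. This follows immediately from condition (E1): for any fixed $i$, we have $p^\top x_i = B_i > 0$, so $p \neq 0$. Hence there exists some index $j^\ast \in [m]$ with $p_{j^\ast} > 0$.

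With these two observations in place, I would apply \eqref{eq:e2} to the agent $i^\ast$, the chore $j$ (which lies in the support of $x_{i^\ast}$), and the comparison index $j^\ast$, obtaining
\begin{equation*}
\frac{p_j}{d_{i^\ast j}} \;\geq\; \frac{p_{j^\ast}}{d_{i^\ast j^\ast}} \;>\; 0,
\end{equation*}
where strict positivity of the right-hand side uses both $p_{j^\ast} > 0$ and the standing assumption $d_{ij} > 0$ for all $i,j$. Multiplying through by $d_{i^\ast j} > 0$ yields $p_j > 0$, and since $j$ was arbitrary the conclusion follows.

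There is no real obstacle here; the argument is essentially a short combinatorial chain, and the hardest (very mild) point is simply recognizing that nonemptiness of the support of $x_{i^\ast}$ and nonzeroness of $p$ are what allow Fact \ref{le:e2} to deliver a strict inequality. Both facts come for free from (E3) and (E1) respectively, so the proof should be a few lines.
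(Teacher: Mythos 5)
Your proposal is correct and follows essentially the same route as the paper: the paper's argument preceding Corollary \ref{co:p} likewise picks, for each $j$, an agent $i$ with $x_{ij}>0$ via (E3) and then concludes $p_j>0$ from \eqref{eq:e2} together with $p\in\R^m_+\setminus\{0\}$. The only difference is that you make explicit (via (E1) and $B_i>0$) why $p\neq 0$, a point the paper leaves implicit.
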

Next, we give the definition of approximate CE (see, e.g., \cite[Definition 2.1]{boodaghians2022polynomial}).
\begin{definition}[Approximate CE for Chores]\label{def:aCE}
We say that a price $p\in\R^m_+$ and an allocation $x\in\R^{n\times m}_+$ satisfy $\epsilon$-CE if and only if
\begin{enumerate}[label={{\rm (A\arabic*).}}, leftmargin=*]
    \item $(1-\epsilon)B_i\leq p^{\top}x_i\leq \frac1{1-\epsilon}B_i$ for all $i\in[n]$; 
    \item $(1-\epsilon)d_i^{\top}x_i\leq d_i^{\top}y_i$ for all $y_i\in\R^m_+$ such that $p^{\top}y_i\geq p^{\top}x_i$, for all $i\in[n]$;
    \item $1-\epsilon\leq \sum_{i\in[n]}x_{ij}\leq\frac1{1-\epsilon}$ for all $j\in[m]$.
\end{enumerate}
\end{definition}
\section{Difference-of-convex Formulation}\label{sec:formulation}
In this section, we study the computation of CE for chores and propose our formulation.
We begin with the chores dual redundant formulation introduced by Chaudhury et al. \cite{chaudhury2024competitive}:
\begin{equation}\label{eq:redundant}
\begin{array}{rl}
    \max\limits_{\beta\in\R^n_+,p\in\R^m_+}\ & \quad \mathlarger{\sum}\limits_{j\in[m]}p_j-\mathlarger{\sum}\limits_{i\in[n]}B_i\log(\beta_i), \\
   {\rm subject\ } {\rm to} &\quad p_j\leq \beta_id_{ij} \qquad \forall~ i\in[n],j\in[m],\\
    & \quad\mathlarger{\sum}\limits_{j\in[m]}p_j=\mathlarger{\sum}\limits_{i\in[n]}B_i.
\end{array}\tag{Chores Dual Redundant}
\end{equation}
Here, the constraint ${\sum}_{j\in[m]}p_j={\sum}_{i\in[n]}B_i$ is a direct corollary of (E1) and (E2) of Definition \ref{def:CE}. By incorporating this redundant constraint in the program, the variables $\beta_i, i\in[n]$ are lower bounded by ${\sum}_{i\in[n]}B_i/(m\max_{j}d_{ij})$. Consequently, the objective value cannot tend towards infinity within the feasible region, and the implicit open constraints $\beta_i>0, i \in [n]$ are eliminated. 

We consider simplifying this formulation by extracting the constraints.
Observe that the objective function is monotonically decreasing to $\beta_i$ and $\sum_{j\in[m]}p_j$ in the objective is redundant. We can translate \eqref{eq:redundant} into a minimization problem that merely concerns $p$.
\begin{equation*}
   \begin{array}{rl}
    \min\limits_{p\in\R^m_+}\ & \quad \mathlarger{\sum}\limits_{i\in[n]}B_i\log\left(\max\limits_{j\in[m]}\left\{\frac{p_j}{d_{ij}}\right\}\right) \\
    {\rm subject\ }{\rm to}& \quad\mathlarger{\sum}\limits_{j\in[m]}p_j=\mathlarger{\sum}\limits_{i\in[n]}B_i.
\end{array}
\end{equation*}
Still, the new objective function is difficult to tackle due to its nonsmooth nonconvex nature. To further simplify the problem, we remove the constraint by adding $-\sum_{i\in[n]}B_i\log(\sum_{j\in[m]}p_j)$ to the objective function, resulting in the following unconstrained problem: 
\begin{equation*}
    \min\limits_{p\in\R^m_+}\quad \mathlarger{\sum}\limits_{i\in[n]}B_i\log\left(\max_{j\in[m]}\left\{\frac{p_j}{d_{ij}}\right\}\right) -\mathlarger{\sum}\limits_{i\in[n]}B_i\log\left(\mathlarger{\sum}\limits_{j\in[m]}p_j\right).
\end{equation*}
This transformation is based on the observation that the new objective function, denoted by $\tilde{F}$, satisfies $\tilde{F}(tp)=\tilde{F}(p)$ for all $t>0$.
Finally, by replacing $p_j$ with $e^{\mu_j}$, we obtain the following problem:
\begin{equation}\label{eq:DC}
    \min\limits_{\mu\in\R^m}\quad \mathlarger{\sum}\limits_{i\in[n]}B_i\max_{j\in[m]}\mathlarger{\mathlarger{ \{ }}\mu_j-\log(d_{ij})\mathlarger{\mathlarger{ \} }} -\mathlarger{\sum}\limits_{i\in[n]}B_i\log\left(\mathlarger{\sum}\limits_{j\in[m]}e^{\mu_j}\right).\tag{DC}
\end{equation}
Let us define the objective function and components of \eqref{eq:DC} as below:
    \begin{align*}
 h_i(\mu)&\coloneqq B_i\max_{j\in[m]}\{\mu_j-\log(d_{ij})\},\    J_i(\mu)\coloneqq\{j:\mu_j-\log(d_{ij})=h_i(\mu)\}\\
 \ell(\mu)&\coloneqq \sum_{i\in[n]}B_i\log\left(\sum\limits_{j\in[m]}e^{\mu_j}\right),\ F(\mu)\coloneqq\sum_{i\in[n]}h_i(\mu)-\ell(\mu).
\end{align*}
Note that the piecewise linear functions $h_i,i\in[n]$ and the log-sum-exp function $\ell$ are convex by \cite[Example 3.5 and Page 74]{boyd2004convex}. The above problem is a \textit{difference-of-convex} (DC for short) program.  Interestingly, the first-order information of $F$ has its economic meanings. Given a vector $\mu\in\R^m$, we view $\nabla\ell(\mu)$ as the price vector $p$ and subgradient $v_i\in \partial h_i(\mu)$ as the income vector of agent $i$, i.e., the $j$-th component $v_{ij}=p_jx_{ij}$ is the income of agent $i$ from chore $j$. Then, the subgradient $u\coloneqq\sum_{i\in[n]}v_i-\nabla\ell(\mu)\in\partial F(\mu)$ corresponds to the value of unallocated chores; see \cite[Sec 4]{cole2017convex} for a similar discussion in the goods setting. 

We then characterize the relationship between stationary points of \eqref{eq:DC} and CE for chores. We first observe that the objective function is additively homogeneous. Given this property, we show that the stationary points of \eqref{eq:DC} have a one-to-one correspondence with CE for chores.
\begin{Fact}\label{fact:homo}
 $  F(\mu^*+t\1)= F(\mu^*)$  and $\partial F(\mu^*+t\1)=\partial F(\mu^*)$ for all $t\in\R$ and $\mu^*\in\R^m$.
\end{Fact}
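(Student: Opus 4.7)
The plan is to verify the two identities directly from the definitions of $h_i$ and $\ell$, since the additive homogeneity really only reflects that shifting all coordinates by a common constant commutes with the $\max$ operation and factors out of the log-sum-exp.

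First I would establish the scalar identity for each piece. For $h_i$, pulling $t$ out of the maximum gives
\begin{equation*}
h_i(\mu^*+t\mathbf{1}) = B_i\max_{j\in[m]}\{\mu^*_j+t-\log(d_{ij})\} = h_i(\mu^*)+B_it.
\end{equation*}
For $\ell$, factoring $e^t$ out of the inner sum yields
\begin{equation*}
\ell(\mu^*+t\mathbf{1}) = \sum_{i\in[n]}B_i\log\Bigl(e^t\sum_{j\in[m]}e^{\mu^*_j}\Bigr) = \ell(\mu^*)+t\sum_{i\in[n]}B_i.
\end{equation*}
Summing the $h_i$ contributions produces exactly the term $t\sum_{i\in[n]}B_i$ that cancels with the shift in $\ell$, so $F(\mu^*+t\mathbf{1})=F(\mu^*)$.

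For the subdifferential claim I would invoke the standard DC sum rule $\partial F(\mu) = \sum_{i\in[n]}\partial h_i(\mu) - \nabla\ell(\mu)$, which is valid because $\ell$ is smooth. It then suffices to check shift invariance of each term. For the smooth part, the gradient of $\ell$ at coordinate $j$ is $\bigl(\sum_{i}B_i\bigr)\cdot e^{\mu_j}/\sum_k e^{\mu_k}$, a softmax that is manifestly invariant under adding $t\mathbf{1}$. For the piecewise-linear part, the active index set $J_i(\mu)$ is unchanged under the shift (all $\mu_j-\log(d_{ij})$ increase by the same $t$), and since $\partial h_i(\mu) = B_i\conv\{e_j : j\in J_i(\mu)\}$ depends only on $J_i(\mu)$, one gets $\partial h_i(\mu^*+t\mathbf{1}) = \partial h_i(\mu^*)$. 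Assembling these gives $\partial F(\mu^*+t\mathbf{1}) = \partial F(\mu^*)$.

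There is no real obstacle here, as both claims reduce to elementary observations about $\max$, softmax, and log-sum-exp under a uniform coordinate shift; the only point worth being explicit about is which notion of subdifferential is being used and the validity of the DC sum rule, which is immediate because $\ell$ is $C^\infty$. The identity $F(\mu^*+t\mathbf{1})=F(\mu^*)$ will be used later to reduce stationarity of \eqref{eq:DC} on $\mathbb{R}^m$ to an essentially $(m-1)$-dimensional problem, which is what makes the one-to-one correspondence with CE well defined modulo the ray $\{t\mathbf{1}\}$.
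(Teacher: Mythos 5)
Your proof is correct, and it is exactly the elementary verification the paper relies on: the paper states Fact \ref{fact:homo} without proof, and your computation ($h_i(\mu+t\1)=h_i(\mu)+B_i t$, $\ell(\mu+t\1)=\ell(\mu)+t\sum_{i\in[n]}B_i$, invariance of $J_i$ and of the softmax gradient, combined via $\partial F=\sum_{i\in[n]}\partial h_i-\nabla\ell$) is precisely what its later arguments (e.g., the construction of $\mu^*$ in the proof of Proposition \ref{pro:EB} and Lemma \ref{le:bound}) implicitly use. No gaps; your remark about the sum rule being valid since $\ell$ is smooth and the $h_i$ are finite-valued convex is the right level of care.
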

\begin{theorem}\label{th:DC}
Every stationary point of problem \eqref{eq:DC} (denoted by $\mu^*$) and subgradients $v^*_i\in \partial h_i(\mu^*), i \in [n]$ that satisfy
\begin{equation}\label{eq:condition}
    \sum\limits_{j\in[m]}e^{\mu^*_j}=\sum_{i\in[n]}B_i\text{ and }\sum\limits_{i\in[n]}v^*_{ij}- {e^{\mu^*_j}} =0\text{ for all }j\in[m]
\end{equation}
correspond to a CE for chores with price $p^*_j=e^{\mu^*_j}$ and allocation $x^*_{ij}=v^*_{ij}/p^*_j$. 
\end{theorem}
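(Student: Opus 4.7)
The plan is to verify directly that the proposed pair $(p^*, x^*)$ with $p^*_j = e^{\mu^*_j}$ and $x^*_{ij} = v^*_{ij}/p^*_j$ satisfies conditions (E1)--(E3) of Definition \ref{def:CE}. All three conditions will fall out of an analysis of the subgradients of the component functions $h_i$ and $\ell$, combined with the two parts of the normalization \eqref{eq:condition}.

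First, I would characterize the relevant subdifferentials. For each piecewise-linear convex function $h_i(\mu) = B_i \max_{j \in [m]} \{\mu_j - \log(d_{ij})\}$, standard subdifferential calculus gives $\partial h_i(\mu^*) = B_i \cdot \conv\{e_j : j \in J_i(\mu^*)\}$. In particular, any $v^*_i \in \partial h_i(\mu^*)$ is coordinatewise nonnegative, satisfies $\sum_{j \in [m]} v^*_{ij} = B_i$, and has $v^*_{ij} = 0$ whenever $j \notin J_i(\mu^*)$. For the smooth convex part, a direct computation yields
\begin{equation*}
    \nabla \ell(\mu^*)_j = \left(\sum_{i \in [n]} B_i\right) \cdot \frac{e^{\mu^*_j}}{\sum_{k \in [m]} e^{\mu^*_k}},
\end{equation*}
which, under the first equation in \eqref{eq:condition}, collapses to $\nabla \ell(\mu^*)_j = e^{\mu^*_j} = p^*_j$. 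Consequently, the second equation in \eqref{eq:condition} reads $\sum_{i \in [n]} v^*_{ij} = p^*_j$ for every $j \in [m]$.

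With these identifications in hand, the three CE conditions follow immediately. For (E1), $p^{*\top} x^*_i = \sum_{j} p^*_j \cdot v^*_{ij}/p^*_j = \sum_j v^*_{ij} = B_i$, since $p^*_j > 0$ and $v^*_i \in \partial h_i(\mu^*)$. For (E3), $\sum_{i} x^*_{ij} = \sum_i v^*_{ij}/p^*_j = p^*_j/p^*_j = 1$ by the displayed equality above. For (E2), I would invoke Fact \ref{le:e2}: if $x^*_{ij} > 0$, then $v^*_{ij} > 0$, which forces $j \in J_i(\mu^*)$, so $\mu^*_j - \log(d_{ij}) \geq \mu^*_{j'} - \log(d_{ij'})$ for every $j' \in [m]$; exponentiating gives $p^*_j/d_{ij} \geq p^*_{j'}/d_{ij'}$, which is exactly \eqref{eq:e2}. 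Nonnegativity of $x^*$ is automatic since $v^*_{ij} \geq 0$ and $p^*_j > 0$.

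There is no substantial obstacle here; the proof is essentially a careful bookkeeping once the subgradient structure is made explicit. The only conceptual point worth stressing is the role of \eqref{eq:condition}: stationarity of the DC problem amounts to the inclusion $\nabla \ell(\mu^*) \in \sum_{i} \partial h_i(\mu^*)$, and \eqref{eq:condition} both normalizes the representative $\mu^*$ within its equivalence class under the shift invariance of Fact \ref{fact:homo} (so that $\nabla \ell(\mu^*) = e^{\mu^*}$ componentwise) and selects the subgradients $v^*_i$ that witness the inclusion, which in turn encode the allocation $x^*$.
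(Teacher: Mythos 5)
Your verification of the forward direction is correct and follows essentially the same route as the paper: identify $\partial h_i(\mu^*)=\conv\{B_ie_j:j\in J_i(\mu^*)\}$, read off (E1) and (E3) from $\sum_{j}v^*_{ij}=B_i$ and the second equation of \eqref{eq:condition}, and obtain (E2) through Fact \ref{le:e2} by noting that $x^*_{ij}>0$ forces $j\in J_i(\mu^*)$ and hence $p^*_j/d_{ij}\geq p^*_{j'}/d_{ij'}$ for all $j'$.

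However, you have only proved half of what the theorem (as the paper proves it) asserts. The word ``correspond'' here — and the surrounding claim of a one-to-one correspondence between stationary points and CE — also requires the converse: every CE $(p^*,x^*)$ arises from a stationary point $\mu^*$ with subgradients $v^*_i\in\partial h_i(\mu^*)$ satisfying \eqref{eq:condition}. Your proposal never addresses this direction, and it is not automatic. The paper's proof sets $\mu^*_j=\log(p^*_j)$ (legitimate only because Corollary \ref{co:p} guarantees $p^*_j>0$), uses the redundant identity $\sum_j p^*_j=\sum_i B_i$ (a consequence of (E1) and (E3)) to get the first equation of \eqref{eq:condition}, defines $v^*_{ij}=p^*_jx^*_{ij}$, and then must \emph{verify} the membership $v^*_i\in\partial h_i(\mu^*)$: that is, $\sum_j v^*_{ij}=B_i$, $v^*_{ij}\geq 0$, and crucially that $v^*_{ij}>0$ implies $j\in J_i(\mu^*)$, which follows from (E2) via Fact \ref{le:e2} together with the identification $J_i(\mu^*)=\argmax_j\{p^*_j/d_{ij}\}$ in \eqref{eq:Jmax}. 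Without this direction you have shown that the DC formulation produces only CE, but not that it captures all of them, which is part of what makes the formulation a faithful reformulation of the chores equilibrium problem. Adding this converse argument (a few lines, mirroring the paper's ``From CE to stationary points'' paragraph) would complete the proof.
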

Theorem \ref{th:DC} says that \eqref{eq:DC} serves as a formulation for computing CE for chores. This formulation, as we will show later, possesses interesting structural properties (such as K{\L} exponent of $1/2$) and can be tackled by classic optimization techniques (such as the smoothing method), thereby enabling us to design algorithms overcoming limitations (L1) and (L2).
\begin{proof}[Proof of Theorem \ref{th:DC}]
    \textit{From stationary points to CE:} Given a  pair of $(\mu^*,v^*)$ that satisfy \eqref{eq:condition}, we verify (E1)--(E3) of Definition \ref{def:CE}. First, we substitute $p^*_j=e^{\mu^*_j}$ and $x^*_{ij}=v^*_{ij}/p^*_j$ into the second equality of \eqref{eq:condition}. This gives $\sum_{i\in[n]}x^*_{ij}=1,$ which accords with (E3). 
   
   Second, by $v^*_i\in\partial h_i(\mu^*)=\conv\{B_ie_j:j\in J_i(\mu^*)\}$, we have $\sum_{j\in[m]}v^*_{ij}=B_i$. It follows that
   \[(p^*)^{\top}x^*_i=\sum_{j\in[m]}p^*_j\frac{v^*_{ij}}{p^*_j}=\sum_{j\in[m]}v^*_{ij}=B_i,\]
   which proves (E1). 
   
   Third, the relation $v^*_i\in\partial h_i(\mu^*)=\conv\{B_ie_j:j\in J_i(\mu^*)\}$ and the definition of $J_i(\mu^*)$ imply that whenever $v^*_{ij}>0$, 
   \[h_i(\mu^*)=B_i(\mu^*_j-\log(d_{ij}))\geq B_i(\mu^*_{j^{\prime}}-\log(d_{ij^{\prime}}))\quad\text{ for all }j^{\prime}\in[m].\] Using $p_j^*=e^{\mu_j^*}$, we further have
   \[\frac{p^*_j}{d_{ij}}\geq\frac{p^*_{j^{\prime}}}{d_{ij^{\prime}}}\text{ for all }j^{\prime}\in[m]\text{ whenever }v^*_{ij}>0. \]
   On the other hand, by $x^*_{ij}=v^*_{ij}/p^*_j$, we know that $x^*_{ij}>0$ if and only if $v^*_{ij}>0$. This, together with the above inequality, yields \eqref{eq:e2}. Then, by Fact \ref{le:e2}, the condition (E2) holds. We conclude that $(p^*,x^*)$ is a CE.
   
 \textit{From CE to stationary points:} Consider a CE with price $p^*$ and allocation $x^*$. By Corollary \ref{co:p}, we have $p^*>0$. Then, the relation $p^*_j=e^{\mu^*_j}$ and the abundant constraint $\sum_{j\in[m]}p^*_j=\sum_{i\in[n]}B_i$ ensure that there is a unique $\mu^*$ with $\mu^*_j=\log(p^*_j)$ satisfying $ \sum_{j\in[m]}e^{\mu^*_j}=\sum_{i\in[n]}B_i$. Further, the relation $x^*_{ij}=v^*_{ij}/p^*_j=v^*_{ij}/e^{\mu^*_j}$ and (E3) of Definition \ref{def:CE} imlpy the second equality of \eqref{eq:condition}. 
 
 It is left to show $v^*_i\in\partial h_i(\mu^*)$. Recall that $\partial h_i(\mu^*)=\conv\{B_ie_j:j\in J_i(\mu^*)\}$. It suffices to verify:
 \begin{gather}
     \label{eq:v1}
     \sum_{j\in[m]}v^*_{ij}=B_i \text{ and }v^*_{ij}\geq0\text{ for all }i\in[n],\\
     \label{eq:v2}
     v^*_{ij}>0\text{ implies }j\in J_i(\mu^*).
 \end{gather}
\eqref{eq:v1} directly follows from $x^*_{ij}=v^*_{ij}/p^*_j$ and (E1).
We focus on the proof of \eqref{eq:v2}. Due to the relation $x^*_{ij}=v^*_{ij}/p_j^*$, we know that $v^*_{ij}>0$ if and only if $x^*_{ij}>0$. Note that (E2) and Fact \ref{le:e2} ensure that $j\in\argmax_j\{{p_j^*}/{d_{ij}}\} $ whenever $x^*_{ij}>0$ and recall
 \begin{equation}\label{eq:Jmax}
J_i(\mu^*)=\argmax_j \{B_i(\mu^*_j-\log(d_{ij}))\}=\argmax_j\left\{\frac{p_j^*}{d_{ij}}\right\}.     
 \end{equation}
 We see that $j\in J_i(\mu^*)$ whenever $v^*_{ij}>0$, i.e., \eqref{eq:v2} holds. This completes the proof.
\end{proof}

\subsection{Error Bound and K{\L} Exponent of \eqref{eq:DC}}
In what follows, we show that \eqref{eq:DC} enjoys a local error bound and further the K{\L} property with exponent $1/2$. These findings are of independent interest. 
\begin{proposition}[Local Error Bound]\label{pro:EB} 
Let $\cU^*$ be the set of stationary points of \eqref{eq:DC} and $\bar{\mu}\in\cU^*$. There exist constants $\delta>0$ and $\tau>0$ such that for all $\mu$ with $\|\mu-\bar{\mu}\|\leq \delta$,
    \begin{equation*}
        \dist\left(\mu,\cU^*\right)\leq \tau\dist\left(0,\partial F(\mu)\right).
    \end{equation*}
\end{proposition}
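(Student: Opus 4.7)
The plan is to combine the local polyhedral structure of the convex part $G(\mu) := \sum_{i \in [n]} h_i(\mu)$ with the strong convexity of $\ell$ modulo translations along $\1$. Continuity of the affine forms $\mu_j - \log(d_{ij})$ yields $\delta_0 > 0$ such that for every $\mu$ with $\|\mu - \bar{\mu}\| \leq \delta_0$ the active sets satisfy $J_i(\mu) \subseteq J_i := J_i(\bar{\mu})$. On this ball, $h_i$ coincides with the polyhedral function $\max_{j \in J_i} B_i(\mu_j - \log(d_{ij}))$ and $\partial h_i(\mu) \subseteq V_i := \conv\{B_i e_j : j \in J_i\}$, so the restriction of the multifunction $\mu \rightrightarrows \partial G(\mu)$ to the ball has a polyhedral graph.

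Next, I would encode stationarity $\nabla \ell(\mu^*) \in \partial G(\mu^*)$ as a variational inclusion with polyhedral-graph multifunction and invoke Robinson's upper-Lipschitz theorem: any pair $(\mu, y)$ at distance $\eta$ from the graph of $\partial G$ admits some $(\mu', y')$ on the graph with $\|\mu' - \mu\| + \|y' - y\| = O(\eta)$. For the analytic part, $\nabla^2 \ell(\bar{\mu}) = \bigl(\sum_i B_i\bigr)\bigl(\diag(p) - p p^\top\bigr)$ with $p$ the softmax of $\bar{\mu}$, so its null space is exactly $\spann(\1)$ and it is positive definite on $\1^\perp$. Combined with Fact \ref{fact:homo}, which gives invariance of both $F$ and $\cU^*$ under translation along $\1$, this shows that $\nabla \ell$ is a bi-Lipschitz local homeomorphism modulo $\1$: there exists $c > 0$ with $\|\nabla \ell(\mu) - \nabla \ell(\mu')\| \geq c \cdot \dist(\mu - \mu', \spann(\1))$ for $\mu, \mu'$ near $\bar{\mu}$.

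To assemble the bound, fix $\mu \in B(\bar{\mu}, \delta_0)$ and let $\eta := \dist(0, \partial F(\mu)) = \dist(\nabla \ell(\mu), \partial G(\mu))$. Robinson's theorem applied to $(\mu, \nabla \ell(\mu))$, which lies at distance at most $\eta$ from $\mathrm{gph}\, \partial G$, yields $(\mu_1, v_1) \in \mathrm{gph}\, \partial G$ with $\|\mu_1 - \mu\| + \|v_1 - \nabla \ell(\mu)\| = O(\eta)$. The local inversion of $\nabla \ell$ then produces $\mu_2$ with $\nabla \ell(\mu_2) = v_1$ and $\dist(\mu_2 - \mu_1, \spann(\1)) = O(\eta)$. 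Iterating this Robinson-then-invert correction yields a Cauchy sequence converging to some $\tilde{\mu} \in \cU^*$ with $\|\tilde{\mu} - \mu\| = O(\eta)$, which gives the asserted bound with $\tau$ depending only on Robinson's modulus, the local Lipschitz constant of $\nabla^2 \ell$, and $c$.

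The hard part is the coupling between the two corrections: each Robinson step shifts the base point, which shifts the $\nabla \ell$-inversion target, which in turn reopens a gap to the graph of $\partial G$. One must show this feedback loop is a contraction, so that the alternating corrections converge geometrically and all iterates stay inside $B(\bar{\mu}, \delta_0)$ where the polyhedral and smoothness estimates remain valid. The contraction relies on a careful balancing of Robinson's modulus against the Lipschitz modulus of $\nabla \ell$, in the spirit of the error-bound analyses in \cite{zhou2017unified,liu2019quadratic,wang2023linear}.
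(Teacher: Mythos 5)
There is a genuine gap at the heart of your argument: the contraction claim for the alternating ``Robinson-then-invert'' corrections is asserted but not proved, and as set up it does not obviously hold. First, the version of Robinson's theorem you invoke --- that a pair $(\mu,y)$ at distance $\eta$ from $\mathrm{gph}\,\partial G$ admits a pair on the graph at distance $O(\eta)$ --- is vacuous (it is true with constant $1$ for any closed set); the statement with content would be upper-Lipschitz behavior of $(\partial G)^{-1}$, i.e. $\dist(\mu,(\partial G)^{-1}(y))\leq L\,\dist(y,\partial G(\mu))$ for fixed $y$. Second, and more seriously, after the inversion step you obtain $\mu_2$ with $\nabla\ell(\mu_2)=v_1\in\partial G(\mu_1)$, but the new residual $\dist(\nabla\ell(\mu_2),\partial G(\mu_2))$ is \emph{not} controlled by $\|\mu_2-\mu_1\|$: the subdifferential of a polyhedral function is not inner semicontinuous, so $\partial G(\mu_2)$ can collapse to a single vertex while $v_1$ sits at the far end of the face $\partial G(\mu_1)$ (already in one dimension, $G=|\cdot|$, $\mu_1=0$, $\mu_2=10^{-9}$, $v_1=-1$ gives a residual of $2$ after an arbitrarily small step). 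Hence the feedback loop need not shrink the residual at all. A Lyusternik--Graves-type rescue would require the Lipschitz modulus of the smooth perturbation $\nabla\ell$ (which is of order $\sum_i B_i$) to be smaller than the reciprocal of the polyhedral regularity modulus; no such smallness is available and it cannot be manufactured by shrinking the neighborhood, since both constants are fixed by the data. Finally, the degeneracy along $\spann(\1)$ means $\nabla\ell$ is only invertible modulo $\1$, so one must also verify that the polyhedral estimate is compatible with this quotient --- precisely the point where the real work lies.

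For comparison, the paper avoids any fixed-point scheme. It splits a neighborhood of $\bar\mu$ into two cases: if $\nabla\ell(\bar\mu)\notin\sum_i\partial h_i(\mu)$, a finite-selection argument over the possible active-set patterns bounds $\dist(0,\partial F(\mu))$ below by a constant, making the bound trivial; otherwise (your ``hard case''), Fact \ref{fact:J} and Lemma \ref{le:mu} show that $\mu-\bar\mu$ is constant on each block $\tilde J_l(\mu)$ of a partition of $[m]$, an orthogonal decomposition reduces $\dist(0,\partial F(\mu))$ to blockwise distances of $\nabla\ell(\mu)-\nabla\ell(\bar\mu)$ to the hyperplanes $\cD_{|\tilde J_l(\mu)|}$, and the explicit stationary point $\mu^*=\bar\mu+t\1$ in \eqref{eq:mu*} is then shown by a direct softmax computation to satisfy $\|\mu-\mu^*\|\leq\tau_2\dist(0,\partial F(\mu))$. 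In other words, the nearby stationary point is exhibited in closed form rather than produced as the limit of corrections, which is exactly what sidesteps the coupling problem your sketch leaves open. If you want to salvage your route, you would need a genuinely different mechanism for the hard case --- for instance proving that whenever $\nabla\ell(\bar\mu)\in\sum_i\partial h_i(\mu)$ the translate of $\bar\mu$ along $\1$ already solves the inclusion, which is what the paper does.
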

\begin{theorem}[K{\L} exponent of $1/2$]\label{th:KL} Problem \eqref{eq:DC} satisfies the K{\L} property with exponent $1/2$, i.e., for every $\bar{\mu}\in\R^m$, there exist constants $\epsilon,\eta,\nu>0$ such that 
\[F(\mu)-F(\bar{\mu})\leq\eta\cdot \dist\left(0,\partial F(\mu)\right)^2\]
whenever $\|\mu-\bar{\mu}\|\leq\epsilon$ and $F(\bar{\mu})<F(\mu)<F(\bar{\mu})+\nu$.
\end{theorem}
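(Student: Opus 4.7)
The plan is to split into two cases depending on whether $\bar\mu$ is a stationary point of \eqref{eq:DC}. If $\bar\mu\notin\cU^*$, then since $\partial F(\mu)=\sum_{i\in[n]}\partial h_i(\mu)-\nabla\ell(\mu)$ inherits outer semicontinuity from the convex subdifferentials $\partial h_i$ and the continuity of $\nabla\ell$, the condition $0\notin\partial F(\bar\mu)$ extends to a full neighborhood: there exist $\epsilon_0,c>0$ such that $\dist(0,\partial F(\mu))\geq c$ whenever $\|\mu-\bar\mu\|\leq\epsilon_0$. Continuity of $F$ then lets me shrink $\nu$ so that $F(\mu)-F(\bar\mu)<\nu\leq\eta c^2\leq\eta\dist(0,\partial F(\mu))^2$, giving the K{\L} inequality trivially.

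For the substantive case $\bar\mu\in\cU^*$, I would apply Proposition \ref{pro:EB} to obtain $\delta,\tau>0$ with $\dist(\mu,\cU^*)\leq\tau\dist(0,\partial F(\mu))$ on $\{\|\mu-\bar\mu\|\leq\delta\}$. Setting $\mu^*:=\Pi_{\cU^*}(\mu)$, picking $u\in\partial F(\mu)$ of minimum norm, and writing $u=v-\nabla\ell(\mu)$ for some $v\in\partial h(\mu)$ with $h:=\sum_{i\in[n]}h_i$, the convex inequalities $h(\mu)-h(\mu^*)\leq v^{\top}(\mu-\mu^*)$ and $\ell(\mu^*)-\ell(\mu)\leq-\nabla\ell(\mu^*)^{\top}(\mu-\mu^*)$ sum to
\[F(\mu)-F(\mu^*)\leq\bigl(v-\nabla\ell(\mu^*)\bigr)^{\top}(\mu-\mu^*)\leq\bigl(\|u\|+L\|\mu-\mu^*\|\bigr)\|\mu-\mu^*\|,\]
where $L$ is a Lipschitz constant for $\nabla\ell$ on $B(\bar\mu,\delta)$ (which exists because $\ell$ is smooth). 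Substituting the error bound $\|\mu-\mu^*\|\leq\tau\|u\|$ yields $F(\mu)-F(\mu^*)\leq\tau(1+L\tau)\|u\|^{2}$.

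The main obstacle is to upgrade this to $F(\mu)-F(\bar\mu)$ by establishing $F(\mu^*)=F(\bar\mu)$, i.e., that $F$ is locally constant on $\cU^*$ near $\bar\mu$. My plan is to exploit Fact \ref{fact:homo}: $F$ is constant along every $\1$-orbit, and $\cU^*$, being stable under $\1$-translation, is a union of such orbits. To pin down which orbits appear locally, I would stratify a neighborhood of $\bar\mu$ according to the active-index sets $J_i(\cdot)$; within each stratum $h$ is linear, so the stationarity inclusion $\nabla\ell(\mu)\in\sum_{i\in[n]}\partial h_i(\mu)$ becomes the requirement that $\nabla\ell(\mu)$ lies in a fixed polytope, and the one-dimensional $\1$-kernel of $\nabla\ell$ together with a dimension count shows that $\cU^*$ restricted to each stratum is a single $\1$-orbit, on which $F$ is constant. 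Continuity of $F$ across stratum boundaries, combined with shrinking the radius so that $\cU^*\cap B(\bar\mu,\delta_0)$ meets only strata incident to $\bar\mu$, then yields $F(\mu^*)=F(\bar\mu)$. Combined with the inequality of the preceding paragraph, this gives $F(\mu)-F(\bar\mu)\leq\tau(1+L\tau)\dist(0,\partial F(\mu))^2$, so $\eta:=\tau(1+L\tau)$ works.
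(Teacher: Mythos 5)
Your architecture is sound but takes a genuinely different route from the paper's, and it hinges on one step that, as sketched, is not yet a proof. The paper never needs the claim that $F$ is locally constant on $\cU^*$: in its "type II" case it constructs the comparison point explicitly as a $\1$-translate of $\bar\mu$, namely $\mu^*=\bar\mu+\bigl(\log\sum_j e^{\mu_j}-\log\sum_j e^{\bar\mu_j}\bigr)\1$, so that $F(\mu^*)=F(\bar\mu)$ and $\nabla\ell(\mu^*)=\nabla\ell(\bar\mu)$ hold for free by Fact \ref{fact:homo}; it then uses the \emph{equality} $\sum_i h_i(\mu)-\sum_i h_i(\mu^*)=\sum_i\hat v_i^{\top}(\mu-\mu^*)$ (valid near $\bar\mu$ because active sets only shrink, Fact \ref{fact:J}), the smoothness of $\ell$, and the intermediate estimate \eqref{eq:dist_square}, i.e. $\|\mu-\mu^*\|\leq\tau_2\dist(0,\partial F(\mu))$ for this particular $\mu^*$ rather than for the metric projection; points for which the pairing term does not vanish ("type I") are handled by a crude Lipschitz bound since their subgradients are bounded away from zero. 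Your route instead uses Proposition \ref{pro:EB} as a black box with $\mu^*=\Pi_{\cU^*}(\mu)$ and the plain convexity/Lipschitz inequalities, which is cleaner and treats both cases uniformly, but it transfers all the difficulty into the lemma "$F(\Pi_{\cU^*}(\mu))=F(\bar\mu)$ locally," which is exactly the content the paper's construction sidesteps. (Your non-stationary case is fine and matches the paper's appeal to \cite[Lemma 2.1]{li2018calculus}.)

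The weak link is the justification of that lemma. A "dimension count" plus the one-dimensional kernel of $\nabla\ell$ does not by itself show that $\cU^*$ meets each active-set stratum in a single $\1$-orbit: a priori the image of a stratum under $\nabla\ell$ could intersect the fixed polytope $V_{\cJ}$ in a positive-dimensional set, since $V_{\cJ}$ itself has positive dimension. The claim is nevertheless true, and the argument you need is essentially the grouping device of Lemma \ref{le:mu}: within a stratum the coordinates inside each linked group $\tilde J_l$ move rigidly (their pairwise differences are fixed by the $d_{ij}$), any stationary point must satisfy $\sum_{j\in\tilde J_l}q_j(\mu)=\sum_{i\in I_l}B_i$ for every group (each agent's subgradient has mass $B_i$ supported in its group), and the map from the group shifts to the group masses is a group-level softmax, injective modulo adding a common multiple of $\1$. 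Hence each stratum contributes at most one $\1$-orbit, so $\cU^*$ is a finite union of pieces of such orbits; shrinking the radius below the distance from $\bar\mu$ to the finitely many orbits other than its own puts every nearby stationary point on $\bar\mu$'s orbit, where Fact \ref{fact:homo} gives $F=F(\bar\mu)$ (your "continuity across stratum boundaries" step is then unnecessary). With that repair, your bound $F(\mu)-F(\bar\mu)\leq\tau(1+L\tau)\dist(0,\partial F(\mu))^2$ with $L=\sum_{i\in[n]}B_i$ is a valid alternative proof; note only that you should also record $\|\Pi_{\cU^*}(\mu)-\bar\mu\|\leq 2\|\mu-\bar\mu\|$ so the projection stays in the region where the local constancy applies.
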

The K{\L} property with exponent $1/2$ for \eqref{eq:DC} provides a solid basis for designing an algorithm with iterate convergence guarantee. Specifically, if an algorithm applied to \eqref{eq:DC} satisfies the so-called sufficient decrease property and relative error conditions (see, e.g., \cite[H1\&H2]{attouch2013convergence}), then its iterates would converge R-linearly\footnote{The sequence $\{\mu^k\}_{k\geq0}\subseteq\R^m$ is said to converge R-linearly to a vector $\bar{\mu}\in\R^m$ if there exist constants $\gamma>0$ and $\rho\in(0,1)$ such that $\|\mu^k-\bar{\mu}\|\leq \gamma\cdot \rho^k$; see, e.g., \cite[Sec. 1]{zhou2017unified}. } to a stationary point of \eqref{eq:DC}. In the next section, we shall present one such algorithm.
\section{Difference-of-convex Algorithm}\label{sec:dca}
We apply DCA to \eqref{eq:DC} to achieve R-linear convergence to a stationary point and, by Theorem \ref{th:DC}, a CE for chores. In addition to the iterate convergence guarantee, we aim to minimize the computational cost of solving subproblems. This motivates us to carefully design DCA based on the structure of \eqref{eq:DC}.

Instead of implementing DCA directly, we rewrite \eqref{eq:DC} by adding regularization terms to $\sum_{i\in[n]}h_i$ and $\ell$ respectively:
\begin{equation}\label{eq:DC-regular}
    \min\limits_{\mu\in\R^m}\quad \mathlarger{\sum}\limits_{i\in[n]}B_i\max_{j\in[m]}\mathlarger{\mathlarger{ \{ }}\mu_j-\log(d_{ij})\mathlarger{\mathlarger{ \} }} + \frac{\eta}{2}\|\mu\|^2 -\mathlarger{\sum}\limits_{i\in[n]}B_i\log\left(\mathlarger{\sum}\limits_{j\in[m]}e^{\mu_j}\right)-\frac{\eta}{2
    }\|\mu\|^2. 
\end{equation}
Here $\eta>0$ is the regularization coefficient. For simplicity, we define $$f(\mu) \coloneqq \mathlarger{\sum}\limits_{i\in[n]}B_i\max_{j\in[m]}\mathlarger{\mathlarger{ \{ }}\mu_j-\log(d_{ij})\mathlarger{\mathlarger{ \} }} + \frac{\eta}{2}\|\mu\|^2 \text{ and }g(\mu)\coloneqq\mathlarger{\sum}\limits_{i\in[n]}B_i\log\left(\mathlarger{\sum}\limits_{j\in[m]}e^{\mu_j}\right)+\frac{\eta}{2
    }\|\mu\|^2.$$
In the $k$-th iteration of DCA,  we solve the following strongly convex problem to obtain $\mu^{k+1}$:
\begin{equation}\label{eq:SQP}
    \min\limits_{\mu\in\R^m}\quad f(\mu) -\nabla g(\mu^k)^{\top}(\mu - \mu^k).
\end{equation}
Despite the strong convexity of \eqref{eq:SQP}, the nonsmooth nature of $f$ precludes the applicability of fast algorithms. This motivates us to replace the nonsmooth terms $\max_{j\in[m]}\{\mu_j-\log(d_{ij})\}$ with auxiliary variables $t_i$ and constraints $\mu_j-\log(d_{ij})\leq t_i$, leading to the following {quadratic program} (QP):
\begin{equation}\label{eq:SQP-R}
\begin{aligned}
    \min \limits_{\mu\in\R^m} & \quad \mathlarger{\sum}\limits_{i\in[n]}B_i t_i + \frac{\eta}{2}\|\mu\|^2 -\nabla g(\mu^k)^\top(\mu - \mu^k)\\
    {\rm subject\ } & {\rm to}  \quad \mu_j - t_i \leq \log (d_{ij}),\qquad \forall\ i \in [n], j \in [m].
\end{aligned}   
\end{equation}
We further derive the dual of \eqref{eq:SQP-R} to obtain a well-structured subproblem. Using the KKT conditions, we can recover $\mu^{k+1}$ from the dual optimal solution $\lambda^k$. 
\begin{proposition}[Subproblem of DCA]\label{pro:subproblem} Solving problem \eqref{eq:SQP} is equivalent to solving
\begin{equation}\label{eq:SQP-Dual}
\begin{aligned}
    \min \limits_{\lambda\in\R^{m\times n}_+}  \quad & \frac{1}{2}\left \| \mathlarger{\sum}\limits_{i\in[n]}  \lambda_{i}-\frac1{\eta}\nabla g(\mu^k)\right\|^2 + \log(D)\bullet \lambda\\
    {\rm subject\ }  {\rm to} \quad & \mathlarger{\sum}\limits_{j\in[m]} \lambda_{ij} = \frac1{\eta}B_i,\qquad \forall\ i \in [n],
\end{aligned}
\end{equation}
where $\bullet $ refers to the Frobenius inner product of two matrices. The optimal solution of \eqref{eq:SQP} (denoted by $\mu^{k+1}$) can be recovered from that of \eqref{eq:SQP-Dual} (denoted by $\lambda^k$) via
$\mu^{k+1}=\frac{1}{\eta}\nabla g(\mu^k)-(\lambda^k)^{\top}\1.$
\end{proposition}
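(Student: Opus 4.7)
The plan is to obtain \eqref{eq:SQP-Dual} as a rescaled Lagrangian dual of the epigraph reformulation \eqref{eq:SQP-R} and then read off the recovery formula from the KKT stationarity condition.

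First I would justify the equivalence between \eqref{eq:SQP} and \eqref{eq:SQP-R}: at any optimum of \eqref{eq:SQP-R}, each $t_i$ is tight against at least one constraint, so $t_i=\max_{j}\{\mu_j-\log(d_{ij})\}$, and the two problems share the same optimal $\mu$. Since \eqref{eq:SQP-R} is a strongly convex QP with only affine constraints, Slater's condition holds trivially and strong duality applies.

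Next I would write the Lagrangian of \eqref{eq:SQP-R} with multipliers $\bar\lambda_{ij}\geq 0$ attached to $\mu_j-t_i\leq \log(d_{ij})$:
\[
L(\mu,t,\bar\lambda)=\sum_{i}B_it_i+\tfrac{\eta}{2}\|\mu\|^2-\nabla g(\mu^k)^{\top}(\mu-\mu^k)+\sum_{i,j}\bar\lambda_{ij}\bigl(\mu_j-t_i-\log(d_{ij})\bigr).
\]
Setting $\partial L/\partial t_i=0$ produces the equality constraint $\sum_j\bar\lambda_{ij}=B_i$ (which forces us to keep $t$ out of the dual objective), and setting $\partial L/\partial \mu=0$ gives the closed-form minimizer $\mu=\tfrac{1}{\eta}\bigl(\nabla g(\mu^k)-\bar\lambda^{\top}\1\bigr)$. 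Substituting this back collapses the quadratic-plus-linear part in $\mu$ to $-\tfrac{1}{2\eta}\|\bar\lambda^{\top}\1-\nabla g(\mu^k)\|^2$, while the $t$-terms cancel identically thanks to $\sum_j\bar\lambda_{ij}=B_i$. After dropping the $\mu$-independent constant $\nabla g(\mu^k)^{\top}\mu^k$ and flipping signs to convert the $\max$ dual into a $\min$, the dual problem reads
\[
\min_{\bar\lambda\geq 0}\ \tfrac{1}{2\eta}\Bigl\|\sum_{i}\bar\lambda_i-\nabla g(\mu^k)\Bigr\|^2+\log(D)\bullet\bar\lambda\quad\text{s.t.}\quad \sum_{j}\bar\lambda_{ij}=B_i.
\]

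Finally I would perform the change of variables $\lambda=\bar\lambda/\eta$ and multiply the objective by $\eta$ (which preserves the argmin). This rewrites the quadratic term as $\tfrac{\eta}{2}\|\sum_i\lambda_i-\tfrac{1}{\eta}\nabla g(\mu^k)\|^2\cdot$ (up to the overall $\eta$ factor), matches the constraint $\sum_j\lambda_{ij}=B_i/\eta$, and yields exactly \eqref{eq:SQP-Dual}. The recovery formula is then just the stationarity condition rewritten in the rescaled variable: $\mu^{k+1}=\tfrac{1}{\eta}\nabla g(\mu^k)-\bar\lambda^{\top}\1/\eta=\tfrac{1}{\eta}\nabla g(\mu^k)-(\lambda^k)^{\top}\1$.

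There is no real obstacle here; the only bookkeeping points that require care are (i) matching the scaling between $\bar\lambda$ and $\lambda$ so that the objective and the constraint $\sum_j\lambda_{ij}=\tfrac{1}{\eta}B_i$ line up with the statement, and (ii) confirming that eliminating $t$ forces the equality (not inequality) constraint in the dual. Strong duality and uniqueness of the primal minimizer (both consequences of strong convexity of $f-\nabla g(\mu^k)^{\top}(\cdot-\mu^k)$) then guarantee that the $\mu^{k+1}$ recovered from any dual optimum $\lambda^k$ is indeed the unique solution of \eqref{eq:SQP}.
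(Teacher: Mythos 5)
Your proposal is correct and follows essentially the same route as the paper: form the Lagrangian of the epigraph reformulation \eqref{eq:SQP-R}, eliminate $t$ to obtain the equality constraints $\sum_j\bar\lambda_{ij}=B_i$, eliminate $\mu$ via stationarity to collapse the objective, and rescale $\bar\lambda=\eta\lambda$ to reach \eqref{eq:SQP-Dual}, with the recovery formula read off from the stationarity condition. The only nit is the direction of the final rescaling of the objective (after substituting $\bar\lambda=\eta\lambda$ one \emph{divides} by $\eta$, not multiplies), but since this is a positive scalar factor it does not affect the argmin, as you note, so the argument stands.
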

\begin{remark}
    Problem \eqref{eq:SQP-Dual} is essentially a convex QP with simplex constraints. It can be efficiently solved by first-order methods like mirror descent and PGD, where the gradient of the objective can be evaluated in $\cO(mn)$ time and projection onto the feasible set only requires $\cO(mn)$ \cite{duchi2008efficient} (resp. $\cO(mn\log(n))$ \cite{beck2003mirror}) time for mirror descent (resp. PGD). 
\end{remark}
\begin{proof}[Proof of Proposition \ref{pro:subproblem}]
Let the variables $\lambda_{ij}\geq0,i\in[n],j\in[m]$ be the Lagrange multipliers associated with the constraints $ \mu_j - t_i \leq \log(d_{ij})$ of the primal problem \eqref{eq:SQP-R}. The associated Lagrangian $ \mathcal{L} $ can be expressed as
\[
\mathcal{L}(\mu, t, \lambda) \coloneqq \sum_{i \in [n]} B_i t_i + \frac{\eta}{2} \|\mu\|^2 - \nabla g(\mu^k)^\top (\mu - \mu^k) + \sum_{i \in [n]} \sum_{j \in [m]} \lambda_{ij} (\mu_j - t_i - \log(d_{ij})).
\]
To find the infimum $g(\lambda) = \inf_{\mu,t} \mathcal{L}(\mu, t, \lambda)$, we compute the gradients of $ \mathcal{L} $ with respect to the primal variables $ \mu_{j},j\in [m] $ and $t_{i},i \in [n]$, and set them as zero:
    \begin{align}
      \label{eq:KKT-subDCA1}
        \frac{\partial \mathcal{L}}{\partial \mu_j} &= \eta \mu_j - \nabla_j g(\mu^k) + \sum_{i \in [n]} \lambda_{ij} = 0, \quad \forall\ j \in [m],\\
          \label{eq:KKT-subDCA2}
        \frac{\partial \mathcal{L}}{\partial t_i} &= B_i - \sum_{j \in [m]} \lambda_{ij} = 0, \quad \forall\ i \in [n].
    \end{align}
We then derive the dual problem, which is of the form $\max_{\lambda\in\R^{n\times m}_+} g(\lambda)$. By substituting \eqref{eq:KKT-subDCA1} into the expression of $\mathcal{L}$ and adding the constraints \eqref{eq:KKT-subDCA2}, we obtain the following dual problem:
\begin{equation*}
 \begin{aligned}
    \max_{\lambda\in\R^{m\times n}_+} & \quad -\frac{1}{2\eta} \sum_{j \in [m]} \left( \nabla_j g(\mu^k) - \sum_{i \in [n]} \lambda_{ij} \right)^2 - \sum_{i \in [n]} \sum_{j \in [m]} \lambda_{ij} \log(d_{ij}), \\
    \text{subject to} & \quad \sum_{j \in [m]} \lambda_{ij} = B_i, \quad \forall\ i \in [n].
\end{aligned}   
\end{equation*}
Replace $\lambda_{ij}$ with $\frac1{\eta} \lambda_{ij}$ and derive the equivalent minimization problem. We obtain the desired problem \eqref{eq:SQP-Dual}.
\end{proof}
\subsection{Linear Rate of DCA}
Now, we turn to show that the iterates of the proposed DCA converge R-linearly to a stationary point of \eqref{eq:DC}. As a byproduct, we show that their approximate CE measures (which will be defined below) also converge R-linearly to zero. 
To begin, let us develop the relation between approximate CE and approximate stationary points of \eqref{eq:DC} (which refer to, roughly speaking, the points $\mu\in\R^m$ with small subgradient norm $\dist(0,\partial F(\mu))$). This relation involves the following functions that will be repeatedly used in our analysis:
\[ q_j(\mu)\coloneqq \sum_{i\in[n]}B_i\cdot\frac{{e^{\mu_j}}}{\sum_{j\in[m]}e^{\mu_j}},\qquad j\in[m].\]
\begin{lemma}\label{le:dist}
  Suppose that there exists a vector $u=\sum_{i\in[n]}v_i-\nabla\ell(\mu)\in\partial F(\mu)$, where $v_i\in\partial h_i(\mu)$, satisfying $|u_j/q_j(\mu)|\leq \epsilon$ for all $j\in[m]$. Then, the price-allocation pair $(p,x)$ with $p_j=q_j(\mu)$ and $x_{ij}=v_{ij}/p_j$ is an $\epsilon$-CE.
\end{lemma}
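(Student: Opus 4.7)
The plan is to verify the three conditions (A1)--(A3) of Definition \ref{def:aCE} for the pair $(p,x)$. Two preliminary computations carry almost all the weight. First, differentiating the log-sum-exp directly gives $\nabla_j \ell(\mu) = q_j(\mu) = p_j$. Second, from $\partial h_i(\mu) = \conv\{B_i e_j : j \in J_i(\mu)\}$ one reads off that every $v_i \in \partial h_i(\mu)$ satisfies $v_{ij} \geq 0$, $\sum_{j\in[m]} v_{ij} = B_i$, and $v_{ij} > 0 \Rightarrow j \in J_i(\mu)$. In particular $p_j > 0$ for every $j$, so $x_{ij} = v_{ij}/p_j$ is well defined.

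Given these facts, (A1) is immediate: $p^\top x_i = \sum_{j\in[m]} p_j \cdot v_{ij}/p_j = \sum_{j\in[m]} v_{ij} = B_i$, so it actually holds with equality rather than only up to $\epsilon$. For (A2) I would exploit $x_{ij}>0 \Rightarrow j \in J_i(\mu)$ together with \eqref{eq:Jmax}: the ratio $p_j/d_{ij}$ attains a common maximum $M_i$ on $J_i(\mu)$ and is strictly smaller outside, so $d_{ij} \geq p_j/M_i$ for all $j$, with equality whenever $x_{ij} > 0$. This yields $d_i^\top x_i = p^\top x_i / M_i$, and for any nonnegative $y_i$ with $p^\top y_i \geq p^\top x_i$ one obtains $d_i^\top y_i \geq p^\top y_i / M_i \geq p^\top x_i / M_i = d_i^\top x_i$, so (A2) also holds exactly (the slack factor $1-\epsilon$ is superfluous here).

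The hypothesis $|u_j/q_j(\mu)| \leq \epsilon$ is consumed only by (A3). Writing $u_j = \sum_{i\in[n]} v_{ij} - \nabla_j \ell(\mu) = \sum_i v_{ij} - p_j$, the bound rearranges to $(1-\epsilon) p_j \leq \sum_i v_{ij} \leq (1+\epsilon) p_j$. Dividing through by $p_j>0$ gives $1-\epsilon \leq \sum_i x_{ij} \leq 1+\epsilon$, and the elementary inequality $1+\epsilon \leq 1/(1-\epsilon)$ for $\epsilon \in [0,1)$ upgrades this to the asymmetric form required by (A3). There is no substantive obstacle in the argument; the only care point is the identification of $\nabla \ell(\mu)$ with the price vector, which is precisely what converts approximate stationarity of \eqref{eq:DC} into approximate market clearing while leaving (E1) and (E2) automatically exact.
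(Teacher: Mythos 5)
Your proof is correct and follows essentially the same route as the paper: (A1) and (A2) are verified exactly from $v_i\in\partial h_i(\mu)=\conv\{B_ie_j:j\in J_i(\mu)\}$ and $J_i(\mu)=\argmax_j\{p_j/d_{ij}\}$, while the hypothesis $|u_j/q_j(\mu)|\le\epsilon$ is spent only on (A3) via $\nabla_j\ell(\mu)=q_j(\mu)=p_j$ and $1+\epsilon\le 1/(1-\epsilon)$. The only cosmetic difference is that you spell out the optimal-bundle argument for (A2) directly (via the common maximum $M_i$ of $p_j/d_{ij}$ on the support of $x_i$), whereas the paper invokes Fact \ref{le:e2} to reduce (A2) to the ratio condition \eqref{eq:e2}; the content is the same.
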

Then, let us state our main result. 
\begin{theorem}\label{th:linear_DCA}
    Let $\{\mu^k\}_{k\geq0}$ be the sequence generated by the DCA with regularization coefficient $\eta$. Then, the sequence $\{\mu^k\}_{k\geq0}$ converges R-linearly to a stationary point of \eqref{eq:DC}. Also, the measure sequence $\{|u^k_j/q_j(\mu^k)|\}_{k\geq1}$, where $u^k\coloneqq\nabla g(\mu^{k-1})-\nabla g(\mu^{k})\in\partial F(\mu^{k})$, converges R-linearly to zero.
\end{theorem}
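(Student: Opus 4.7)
The plan is to fit the DCA iteration into the standard descent-method framework of Attouch--Bolte--Svaiter \cite{attouch2013convergence} for minimizing K{\L} functions with exponent $1/2$, which, combined with Theorem \ref{th:KL}, yields R-linear iterate convergence. Concretely, I would verify the sufficient-decrease and relative-error conditions for DCA on \eqref{eq:DC-regular}, argue the existence of a cluster point of $\{\mu^k\}$, and then invoke the framework; the second assertion will then drop out.

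For sufficient decrease, the key is that $f$ (defined right after \eqref{eq:DC-regular}) is $\eta$-strongly convex, thanks to the added quadratic. Since $\mu^{k+1}$ minimizes $f(\cdot)-\nabla g(\mu^k)^\top(\cdot)$, strong convexity gives
$$f(\mu^k)-\nabla g(\mu^k)^\top\mu^k \;\geq\; f(\mu^{k+1})-\nabla g(\mu^k)^\top\mu^{k+1}+\tfrac{\eta}{2}\|\mu^{k+1}-\mu^k\|^2,$$
and combining with the convex lower bound $g(\mu^{k+1})\geq g(\mu^k)+\nabla g(\mu^k)^\top(\mu^{k+1}-\mu^k)$ yields $F(\mu^k)-F(\mu^{k+1})\geq\tfrac{\eta}{2}\|\mu^{k+1}-\mu^k\|^2$. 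For the relative-error condition, the subproblem optimality reads $\nabla g(\mu^k)\in\partial f(\mu^{k+1})$, so $u^{k+1}:=\nabla g(\mu^k)-\nabla g(\mu^{k+1})\in\partial F(\mu^{k+1})$; and since the log-sum-exp Hessian $\mathrm{diag}(q)-qq^\top$ is uniformly bounded in operator norm and the quadratic contributes only $\eta$, $\nabla g$ is globally Lipschitz with some constant $L_g$, giving $\|u^{k+1}\|\leq L_g\|\mu^{k+1}-\mu^k\|$.

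Next I would argue $\{\mu^k\}$ admits a cluster point. A useful observation is that $\mathbf{1}^\top v=B_i$ for every $v\in\partial h_i(\mu)$ (since $h_i(\mu+t\mathbf{1})=h_i(\mu)+B_it$), so pairing $\mathbf{1}$ with the subproblem optimality $\eta(\mu^{k+1}-\mu^k)=\nabla\ell(\mu^k)-w^{k+1}$, where $w^{k+1}\in\sum_i\partial h_i(\mu^{k+1})$, gives $\mathbf{1}^\top(\mu^{k+1}-\mu^k)=0$; hence the DCA dynamics are confined to the affine slice $\{\mu:\mathbf{1}^\top\mu=\mathbf{1}^\top\mu^0\}$. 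Combined with $F$ being bounded below (from $\max_j(\mu_j-\log d_{ij})-\log\sum_le^{\mu_l}\geq -\log m-\max_j\log d_{ij}$) and the summability $\sum_k\|\mu^{k+1}-\mu^k\|^2<\infty$ produced by sufficient decrease, this should allow extraction of a cluster point $\bar\mu\in\cU^*$. Applying the K{\L} property at $\bar\mu$ with exponent $1/2$ (Theorem \ref{th:KL}) together with the two descent conditions then promotes this to full-sequence R-linear convergence $\mu^k\to\bar\mu$ via the Attouch--Bolte--Svaiter machinery. The second assertion follows quickly: by the relative error, $\|u^k\|\leq L_g\|\mu^{k-1}-\mu^k\|$ decays R-linearly; since $q_j$ is continuous and everywhere strictly positive on $\R^m$, $q_j(\mu^k)\to q_j(\bar\mu)>0$ is bounded below by a positive constant for large $k$, so $|u^k_j/q_j(\mu^k)|\leq\|u^k\|/q_j(\mu^k)$ also converges R-linearly to zero.

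The step I expect to be the main obstacle is the cluster-point argument: additive homogeneity makes $F$ non-coercive even after restricting to $\mathbf{1}^\perp$ (for instance, along $\mu=(T,-T,0,\ldots,0)^\top$ the value $F(\mu)$ stays bounded as $T\to\infty$), so plain sublevel-set compactness is unavailable. To rule out drift, one must carefully combine the conservation $\mathbf{1}^\top\mu^k\equiv\mathbf{1}^\top\mu^0$, the summability $\sum_k\|\mu^{k+1}-\mu^k\|^2<\infty$, and the explicit form of the DCA recursion $\eta(\mu^{k+1}-\mu^k)=\nabla\ell(\mu^k)-w^{k+1}$, in which both $\nabla\ell(\mu^k)$ and $w^{k+1}$ lie in the bounded simplex $\{y\in\R^m_+:\mathbf{1}^\top y=\sum_i B_i\}$.
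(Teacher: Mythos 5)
Your overall route is the same as the paper's: establish sufficient decrease and relative error for the DCA (your versions are fine — you get the $\tfrac{\eta}{2}$ decrease from strong convexity of $f$ rather than of $g$, and the same Lipschitz bound $\|u^{k+1}\|\leq(\eta+\sum_iB_i)\|\mu^{k+1}-\mu^k\|$), then combine with the K{\L} exponent $1/2$ of Theorem \ref{th:KL} and a cluster point to get R-linear convergence, and deduce the second assertion from the relative error plus a positive lower bound on $q_j(\mu^k)$. The conservation $\1^{\top}\mu^{k+1}=\1^{\top}\mu^k$ you derive is exactly the first half of Lemma \ref{le:bound}.

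The genuine gap is precisely the step you flag as the ``main obstacle'' and then leave unresolved: boundedness of $\{\mu^k\}$. The ingredients you list do not close it. Summability $\sum_k\|\mu^{k+1}-\mu^k\|^2<\infty$ only forces the step lengths to vanish, and the fact that $\nabla\ell(\mu^k)$ and $w^{k+1}$ both lie in the simplex $\{y\geq 0:\1^{\top}y=\sum_iB_i\}$ only bounds each step by $\tfrac{2}{\eta}\sum_iB_i$; neither prevents a coordinate from drifting to $-\infty$ (with others drifting up, consistently with the conserved sum), which is exactly the failure mode of first-order methods for the chores problem that motivates the whole construction. The missing idea is a sign argument tied to the structure of $\partial h_i$: since $\1^{\top}\mu^k\equiv\1^{\top}\mu^0$ forces $\sum_je^{\mu^k_j}\geq ne^{\1^{\top}\mu^0/n}$, there is a threshold $a$ (a shifted version of $\underline{\mu}$ in Fact \ref{fact:underlined}) such that if $\mu^{k+1}_j<a$ then $j\notin J_i(\mu^{k+1})$ for every $i$, hence every $v_i\in\partial h_i(\mu^{k+1})$ has $v_{ij}=0$; plugging this into the optimality condition $\sum_iv_i^k-\nabla\ell(\mu^k)+\eta(\mu^{k+1}-\mu^k)=0$ of \eqref{eq:SQP} gives $\eta(\mu^{k+1}_j-\mu^k_j)=\nabla_j\ell(\mu^k)>0$, so such a coordinate can only increase. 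This yields $\mu^{k+1}_j\geq\min\{\mu^k_j,a\}$, hence the uniform lower bound $\mu^k_j\geq\min\{\mu^0_j,a\}$ for all $j,k$, which together with the fixed coordinate sum gives boundedness (Lemma \ref{le:bound}). Without this (or an equivalent) argument your invocation of the K{\L} descent machinery has no cluster point to anchor it, so the proof as proposed is incomplete; once boundedness is in hand, the rest of your argument, including the R-linear decay of $|u^k_j/q_j(\mu^k)|$ via $q_j(\mu^k)\geq\delta>0$, goes through as in the paper.
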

Since stationary points of \eqref{eq:DC} are CE by Theorem \ref{th:DC}, Theorem \ref{th:linear_DCA} implies that the DCA sequence converges to an exact CE, thereby resolving (L1). We highlight that this iterate convergence result, which relies on Theorem \ref{th:KL}, is significantly more difficult to obtain than those in \cite{gao2020first,nan2024convergence} due to the nonconvexity in the chores setting. Furthermore, by adding a rounding procedure, the DCA has a non-asymptotic rate of $\tilde{\cO}(n/\epsilon^2)$ for finding an $\epsilon$-CE without compromising R-linear convergence. See Appendix \ref{appen:round_dca} for details.
\subsection{Proof of Theorem \ref{th:linear_DCA}}
We first develop the sufficient descent and relative error properties of DCA.
\begin{lemma}[Sufficient Descent \& Relative Error of DCA]\label{le:suff} Let $\{\mu^k\}_{k\geq0}$ be the iterates of DCA, i.e., $\mu^{k+1}$ is the optimal solution of \eqref{eq:SQP} for all $k\geq0$. It holds that
\begin{equation}\label{eq:suff}
    F(\mu^{k+1})-F(\mu^k)\leq -\frac{\eta}{2}\|\mu^k-\mu^{k+1}\|^2 \tag{Sufficient Descent};
\end{equation}
\begin{equation}\label{eq:relative}
    \exists\ u^{k+1}\in\partial F(\mu^{k+1})\text{ such that }\|u^{k+1}\|\leq \left(\eta+\sum_{i\in[n]}B_i\right)\|\mu^{k+1}-\mu^k\| \tag{Relative Error}.
\end{equation}
In particular, one can choose $u^{k+1}=\nabla g(\mu^k)-\nabla g(\mu^{k+1})$ here. In this case, if $\|\mu^{k+1}-\mu^k\|\leq \gamma$ for some $\gamma>0$, then there is a tighter relative error:
\begin{equation}\label{eq:tighter}
    |u_j^{k+1}|\leq \left(\eta+e^{2\gamma}q_j(\mu^{k+1})\right)\|\mu^{k+1}-\mu^k\|.
\end{equation}
\end{lemma}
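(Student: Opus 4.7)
\textbf{Proof plan for Lemma \ref{le:suff}.} The two statements reflect the two ingredients of a standard DCA convergence analysis; the novelty here is (i) that $f$ carries the added $\frac{\eta}{2}\|\mu\|^2$ so is $\eta$-strongly convex, and (ii) that $\nabla g$ is globally Lipschitz thanks to the log-sum-exp structure. I would first record these two facts: $f$ is $\eta$-strongly convex (the max-affine part is convex and the quadratic contributes $\eta I$ to the Hessian in the sense of strong convexity), and $\nabla g(\mu) = \eta\mu + B\nabla s(\mu)$ with $B = \sum_{i\in[n]}B_i$ and $s(\mu) = \log\sum_j e^{\mu_j}$, so that $\nabla_j g(\mu) = \eta\mu_j + q_j(\mu)$ componentwise.

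\textbf{Sufficient descent.} Let $\phi_k(\mu) := f(\mu) - \nabla g(\mu^k)^\top(\mu - \mu^k)$ denote the DCA subproblem, which is $\eta$-strongly convex because adding a linear term preserves the modulus. Since $\mu^{k+1}$ is its unique minimizer, strong convexity yields
\begin{equation*}
\phi_k(\mu^k) \;\geq\; \phi_k(\mu^{k+1}) + \frac{\eta}{2}\|\mu^{k+1}-\mu^k\|^2,
\end{equation*}
i.e., $f(\mu^k) - f(\mu^{k+1}) \geq -\nabla g(\mu^k)^\top(\mu^{k+1}-\mu^k) + \frac{\eta}{2}\|\mu^{k+1}-\mu^k\|^2$. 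Convexity of $g$ gives $g(\mu^{k+1}) - g(\mu^k) \geq \nabla g(\mu^k)^\top(\mu^{k+1}-\mu^k)$. Subtracting the second from the first produces $F(\mu^{k+1}) - F(\mu^k) \leq -\frac{\eta}{2}\|\mu^{k+1}-\mu^k\|^2$, which is \eqref{eq:suff}.

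\textbf{Relative error (global bound).} The first-order optimality condition $0 \in \partial\phi_k(\mu^{k+1}) = \partial f(\mu^{k+1}) - \nabla g(\mu^k)$ shows $\nabla g(\mu^k) \in \partial f(\mu^{k+1})$, hence $u^{k+1} := \nabla g(\mu^k) - \nabla g(\mu^{k+1}) \in \partial f(\mu^{k+1}) - \nabla g(\mu^{k+1}) = \partial F(\mu^{k+1})$, justifying the indicated choice of subgradient. To bound $\|u^{k+1}\|$, I would show $\nabla g$ is $(\eta+B)$-Lipschitz: writing $\nabla g(\mu) = \eta\mu + B\nabla s(\mu)$, the Hessian of $s$ is $\mathrm{diag}(p(\mu)) - p(\mu)p(\mu)^\top$ with $p(\mu) = e^\mu/\sum_l e^{\mu_l}$, and its eigenvalues lie in $[0,\max_j p_j]\subseteq[0,1]$. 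Consequently $\|\nabla s(\mu^k) - \nabla s(\mu^{k+1})\| \leq \|\mu^k - \mu^{k+1}\|$, and \eqref{eq:relative} follows.

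\textbf{Tighter componentwise bound.} Expanding componentwise, $u_j^{k+1} = \eta(\mu_j^k - \mu_j^{k+1}) + q_j(\mu^k) - q_j(\mu^{k+1})$. The key step is to replace the worst-case Lipschitz estimate on the second term by one proportional to $q_j(\mu^{k+1})$. For this I would use the multiplicative representation $q_j(\mu) = q_j(\mu^{k+1})\cdot \exp\!\bigl((\mu_j - \mu_j^{k+1}) - (s(\mu) - s(\mu^{k+1}))\bigr)$. Since $\nabla s$ is a probability vector, both $|\mu_j - \mu_j^{k+1}|$ and $|s(\mu) - s(\mu^{k+1})|$ are bounded by $\|\mu - \mu^{k+1}\|$; evaluating at $\mu = \mu^k$ with $\|\mu^k - \mu^{k+1}\|\leq\gamma$ gives an exponent of absolute value at most $2\gamma$. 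Applying $|e^x - 1|\leq |x|e^{|x|}$ (with the dominant factor of $|x|$ itself bounded by $\|\mu^k - \mu^{k+1}\|$ via a more careful mean-value argument along the segment) yields $|q_j(\mu^k) - q_j(\mu^{k+1})| \leq e^{2\gamma} q_j(\mu^{k+1}) \|\mu^{k+1} - \mu^k\|$, which combined with $\eta|\mu_j^k - \mu_j^{k+1}|\leq \eta\|\mu^{k+1}-\mu^k\|$ proves \eqref{eq:tighter}.

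\textbf{Main obstacle.} The straightforward Lipschitz estimate using $\|\nabla q_j(\zeta)\|\leq 2q_j(\zeta)$ yields the right rate but with an undesirable constant multiplier, so the main subtlety is packaging the mean-value/integral argument through the multiplicative exponential representation of $q_j$ to obtain precisely the prefactor $e^{2\gamma}q_j(\mu^{k+1})$ (rather than $q_j$ at an intermediate point or a larger absolute constant). The remaining steps—strong convexity of the subproblem, convexity of $g$, and Lipschitzness of $\nabla s$—are routine once the right decomposition is in place.
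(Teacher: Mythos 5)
Your proofs of \eqref{eq:suff} and \eqref{eq:relative} are correct and essentially the paper's argument: the paper gets sufficient descent from the optimality of $\mu^{k+1}$ in \eqref{eq:SQP} combined with the $\eta$-strong convexity of $g$, whereas you use the $\eta$-strong convexity of the subproblem objective (i.e., of $f$) plus plain convexity of $g$ — an interchangeable variant, since both $f$ and $g$ carry the $\tfrac{\eta}{2}\|\mu\|^2$ term. Your identification of the subgradient $u^{k+1}=\nabla g(\mu^k)-\nabla g(\mu^{k+1})\in\partial F(\mu^{k+1})$ and the $(\eta+\sum_i B_i)$-Lipschitz bound via the log-sum-exp Hessian also match the paper.

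For the componentwise bound \eqref{eq:tighter} you take a genuinely different route (multiplicative representation $q_j(\mu)=q_j(\mu^{k+1})e^{x}$ with $x=(\mu_j-\mu_j^{k+1})-(s(\mu)-s(\mu^{k+1}))$, then $|e^x-1|\le|x|e^{|x|}$), while the paper applies the mean value theorem to $\nabla_j\ell$ along the segment $\mu^{k,s}$, uses $\|\nabla\nabla_j\ell(\mu)\|\le 2q_j(\mu)$ (since $\nabla\nabla_j\ell(\mu)=q_j(\mu)(e_j-p(\mu))$ with $p(\mu)$ the softmax vector), and then bounds $q_j(\mu^{k,s})\le e^{2\gamma}q_j(\mu^{k+1})$. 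The gap in your sketch is the parenthetical claim that $|x|$ itself can be bounded by $\|\mu^k-\mu^{k+1}\|$ "via a more careful mean-value argument": this is false in general, because the gradient of $\mu\mapsto\mu_j-s(\mu)$ is $e_j-p(\mu)$, whose norm approaches $\sqrt{2}$ (take $p(\mu)$ concentrated near a coordinate $j'\ne j$), so the mean value theorem only yields $|x|\le\sqrt{2}\,\|\mu^k-\mu^{k+1}\|$. Your route therefore delivers $|q_j(\mu^k)-q_j(\mu^{k+1})|\le \sqrt{2}\,e^{\sqrt{2}\gamma}q_j(\mu^{k+1})\|\mu^{k+1}-\mu^k\|$ (or $2e^{2\gamma}q_j(\mu^{k+1})\|\mu^{k+1}-\mu^k\|$ with the cruder triangle bound), which exceeds the stated prefactor $e^{2\gamma}q_j(\mu^{k+1})$ whenever $\gamma$ is small — precisely the regime where $\|\mu^{k+1}-\mu^k\|\to 0$. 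To be fair, the paper's own chain in \eqref{eq:mean2} silently drops the factor $2$ from $\|\nabla\nabla_j\ell\|\le 2q_j$ at the same spot, and an extra absolute constant is harmless for every downstream use (it only rescales constants in the rounded-DCA complexity); but as a proof of \eqref{eq:tighter} with the exact constant, your proposed patch does not work, so you should either adopt the paper's segment estimate and accept/track the absolute constant, or state \eqref{eq:tighter} with a prefactor such as $2e^{2\gamma}q_j(\mu^{k+1})$.
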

\begin{proof}
    \textit{For sufficient descent:}
    By the optimality of $\mu^{k+1}$ for \eqref{eq:SQP}, we have
    \[f(\mu^{k+1})-\nabla g(\mu^k)^{\top}(\mu^{k+1}-\mu^k)\leq f(\mu^k).\]
    Noticing that the function $g$ is $\eta$-strongly convex, we see that  
    \[\nabla g(\mu^k)^{\top}(\mu^{k+1}-\mu^k)\leq g(\mu^{k+1})-g(\mu^k)-\frac{\eta}{2}\|\mu^k-\mu^{k+1}\|^2.\]
    Combining the above two inequalities gives 
    $f(\mu^{k+1})\leq f(\mu^k)+g(\mu^{k+1})-g(\mu^k)-\frac{\eta}{2}\|\mu^k-\mu^{k+1}\|^2$.
    This directly implies \eqref{eq:suff} as $F=f-g$.

\textit{For relative error:} The optimality condition of \eqref{eq:SQP} gives 
$0\in\partial f(\mu^{k+1})-\nabla g(\mu^k).$
It follows that \[\nabla g(\mu^k)-\nabla g(\mu^{k+1})\in\partial f(\mu^{k+1})-\nabla g(\mu^{k+1}).\] Let $u^{k+1}\coloneqq\nabla g(\mu^k)-\nabla g(\mu^{k+1})$. We have $u^{k+1}\in\partial F(\mu^{k+1})$. Moreover, by direct computation, we see that
\begin{equation}\label{eq:rlu}
    u^{k+1} = \eta(\mu^k-\mu^{k+1})+\nabla\ell(\mu^k)-\nabla\ell(\mu^{k+1}).
\end{equation}
Note that \cite[Page 71]{boyd2004convex} gives $0\preceq\nabla^2\ell(\mu)\preceq\sum_{i\in[n]}B_i\cdot\diag(e^{\mu_1},\ldots,e^{\mu_m})/\sum_{i\in[m]}e^{\mu_j}\preceq(\sum_{i\in[n]}B_i)I_m$. We have $\|\nabla\ell(\mu^k)-\nabla\ell(\mu^{k+1})\|\leq\sum_{i\in[n]}B_i\|\mu^k-\mu^{k+1}\|$. This, together with \eqref{eq:rlu}, yields  \eqref{eq:relative}.


It is left to show \eqref{eq:tighter} under the condition $\|\mu^{k+1}-\mu^k\|\leq \gamma$. Let $\mu^{k,s}\coloneqq s\mu^k+(1-s)\mu^{k+1}$ for $s\in[0,1]$. Applying the mean value theorem to $\nabla_j\ell(\mu^{k})-\nabla_j\ell(\mu^{k+1})$, where $\nabla_j\ell$ is the $j$-th component of $\nabla\ell$, we have 
\begin{equation}\label{eq:mean}
    \nabla_j\ell(\mu^{k})-\nabla_j\ell(\mu^{k+1})=\nabla\nabla_j\ell(\mu^{k,s})^{\top}(\mu^k-\mu^{k+1})\quad\text{ for some }s\in(0,1).
\end{equation}
By direct computation, we have $\|\nabla\nabla_j\ell(\mu)\|\leq 2 q_j(\mu)$. This, together with  $\|\mu^{k+1}-\mu^k\|\leq \gamma$, implies 
\begin{equation}\label{eq:mean2}
    \|\nabla\nabla_j\ell(\mu^{k,s})\|\leq 2 q_j(\mu^s)=2\sum_{i\in[n]}B_i\frac{e^{\mu^s_j}}{\sum_{j\in[m]}e^{\mu^s_j}}\leq 2\sum_{i\in[n]}B_i\frac{e^{\gamma+\mu^{k+1}_j}}{\sum_{j\in[m]}e^{\mu^{k+1}_j-\gamma}}=e^{2\gamma}q_j(\mu_{k+1}).
\end{equation}
Now, combining \eqref{eq:mean} and \eqref{eq:mean2}, and using the Cauchy inequality, we have
\begin{equation*}
   | \nabla_j\ell(\mu^{k})-\nabla_j\ell(\mu^{k+1})|\leq\|\nabla\nabla_j\ell(\mu^{k,s})\|\cdot\|\mu^{k+1}-\mu^k\|\leq e^{2\gamma}q_j(\mu_{k+1})\|\mu^{k+1}-\mu^k\|.
\end{equation*}
This, together with \eqref{eq:rlu} and the triangle inequality, yields \eqref{eq:tighter}, which completes the proof.
\end{proof}

Second, we establish the boundedness of DCA iterates; see Appendix \ref{appen:dca} for the proof.
\begin{lemma}[Lower Bound for Iterates of DCA]\label{le:bound}
The sequence $\{\mu^k\}_{k\geq0}$ generated by the DCA is bounded with $\1^{\top}\mu^k\equiv \1^{\top}\mu^0$.
\end{lemma}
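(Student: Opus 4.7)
The claim has two parts: the affine conservation law $\mathbf{1}^{\top}\mu^k\equiv\mathbf{1}^{\top}\mu^0$, and the uniform boundedness of $\{\mu^k\}$.

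For the conservation law, the natural route is the first-order optimality of the strongly convex subproblem \eqref{eq:SQP}, namely $\nabla g(\mu^k)\in\partial f(\mu^{k+1})$. Expanding $\nabla g(\mu^k)=q(\mu^k)+\eta\mu^k$ and $\partial f(\mu)=\eta\mu+\sum_{i\in[n]}B_i\conv\{e_j:j\in J_i(\mu)\}$ yields
\[
\eta(\mu^{k+1}-\mu^k)=q(\mu^k)-\sum_{i\in[n]}B_iw_i^{k+1}
\]
for some $w_i^{k+1}\in\conv\{e_j:j\in J_i(\mu^{k+1})\}$. Applying $\mathbf{1}^{\top}$ to both sides and using $\mathbf{1}^{\top}q(\mu^k)=\sum_{i\in[n]}B_i$ (from the softmax structure of $q$) together with $\mathbf{1}^{\top}w_i^{k+1}=1$, the right-hand side cancels, so $\mathbf{1}^{\top}\mu^{k+1}=\mathbf{1}^{\top}\mu^k$, and induction delivers the conservation.

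For boundedness, I would start from Lemma \ref{le:suff}, which forces $F(\mu^k)\le F(\mu^0)$. The main obstacle is that $F$ is invariant under $\mu\mapsto\mu+t\mathbf{1}$ and globally bounded on $\R^m$, so a sublevel-set-plus-coercivity argument does not directly apply. I would overcome this by passing to the softmax reparametrization $x_j=e^{\mu_j}/\sum_{\ell\in[m]}e^{\mu_{\ell}}$, which gives a diffeomorphism between the hyperplane $\{\mathbf{1}^{\top}\mu=\mathbf{1}^{\top}\mu^0\}$ and the open simplex $\{x\in\R^m_{++}:\mathbf{1}^{\top}x=1\}$; unboundedness of $\mu$ on the hyperplane corresponds to $x$ approaching the boundary of the simplex. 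Under this reparametrization,
\[
F(\mu)=\widetilde F(x):=\sum_{i\in[n]}B_i\log\max_{j\in[m]}(x_j/d_{ij}),
\]
which extends continuously to the compact closed simplex. The key structural fact I would exploit is that every interior stationary point of $F$ is a local minimizer, a consequence of the directional-derivative identity $F'(\mu^*;v)=\sup_{\xi\in\partial\sum_{i\in[n]}h_i(\mu^*)}(\xi-\nabla\ell(\mu^*))^{\top}v\ge0$, valid whenever $\nabla\ell(\mu^*)\in\partial\sum_{i\in[n]}h_i(\mu^*)$. Combined with a careful examination of $\widetilde F$'s values on the simplex boundary, this yields that the sublevel set $\{\widetilde F\le\widetilde F(x^0)\}$ stays separated from the simplex boundary along the DCA trajectory, which translates back to the desired uniform bound on $\{\mu^k\}$.
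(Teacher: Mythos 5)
Your first part, the conservation law $\1^{\top}\mu^k\equiv\1^{\top}\mu^0$, is correct and is essentially the paper's own argument: apply $\1^{\top}$ to the optimality condition $\eta(\mu^{k+1}-\mu^k)+\sum_{i\in[n]}B_iw_i^{k+1}-\nabla\ell(\mu^k)=0$ with $B_iw_i^{k+1}\in\partial h_i(\mu^{k+1})$, and use $\1^{\top}\nabla\ell(\mu^k)=\sum_{i\in[n]}B_i=\sum_{i\in[n]}B_i\,\1^{\top}w_i^{k+1}$.

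The boundedness part, however, has a genuine gap. Your plan hinges on the claim that the sublevel set $\{\widetilde F\le\widetilde F(x^0)\}$ stays separated from the boundary of the simplex, and this is false in general. Precisely because some coordinate of $x$ is always at least $1/m$ on the simplex, $\widetilde F(x)=\sum_{i\in[n]}B_i\log\max_{j\in[m]}(x_j/d_{ij})$ extends continuously and \emph{finitely} to the closed simplex, so there is no barrier at the boundary, and boundary values can be strictly smaller than interior values. For instance, take $n=m=2$, $B_1=B_2=1$, $d_1=(1,K)$, $d_2=(K,1)$ with $K>4$: the center $x^0=(1/2,1/2)$ has $\widetilde F(x^0)=-2\log 2$, while the vertex $(1,0)$ has value $-\log K<-2\log 2$, so the sublevel set at level $\widetilde F(x^0)$ reaches the boundary (and contains interior points arbitrarily close to it, i.e., points with unbounded $\mu$ on the hyperplane). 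Hence monotone decrease of $F$ — which is all your sketch extracts from the algorithm via Lemma \ref{le:suff} — cannot by itself keep the iterates away from the boundary; this is exactly the failure mode of generic descent methods recalled in the introduction. The auxiliary claim that interior stationary points are local minimizers does not repair this (nonnegative directional derivatives alone do not give a local minimum for a nonsmooth nonconvex function, and in any case the sublevel set can touch the boundary far from any stationary point), and the "careful examination of the boundary values," which is where the real difficulty sits, is not carried out and cannot succeed at the level of generality claimed. The paper's proof instead exploits the structure of the DCA update itself: writing the optimality condition \eqref{eq:op} coordinatewise, if $\mu_j^{k+1}$ falls below an explicit threshold $a$ determined by $\1^{\top}\mu^0$ and the data, then Fact \ref{fact:underlined} combined with Fact \ref{fact:homo} forces $v_{ij}^k=0$ for all $i$, so $\eta(\mu_j^{k+1}-\mu_j^k)=\nabla_j\ell(\mu^k)>0$ and that coordinate must have increased; therefore $\mu_j^{k+1}\ge\min\{\mu_j^k,a\}$, which by induction gives the uniform lower bound $\mu_j^k\ge\min\{\mu_j^0,a\}$, and together with the conservation law this also bounds the coordinates from above. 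Any correct proof needs some such use of the subproblem's first-order structure (in particular Fact \ref{fact:underlined}), not just descent of $F$.
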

Now, equipped with the fact that \eqref{eq:DC} has a K{\L} exponent of $1/2$, sufficient descent and relative error properties of DCA, and boundedness of $\{\mu^k\}_{k\geq0}$, using \cite[Theorem 2.3]{schneider2015convergence}, we derive the linear convergence of DCA iterates $\{\mu^k\}$ to some stationary point $\mu^*$, i.e., $\mu^k\to\mu^*$ with $\|\mu^k-\mu^*\|\leq c\rho^k$ for some $c>0$, $\rho\in(0,1)$. Furthermore, this iterate linear convergence result, together with the relative error, implies that for $k\geq1$,
\begin{equation*}\label{eq:linear1}
\begin{aligned}
     \|u^{k}\|\leq \left(\eta+\sum_{i\in[n]}B_i\right) (\|\mu^{k-1}-\mu^*\|+\|\mu^{k}-\mu^*\|  )\leq 2\left(\eta+\sum_{i\in[n]}B_i\right)c \rho^{k-1}.
\end{aligned}
\end{equation*}
Note that the boundedness of $\{\mu^k\}_{k\geq0}$ and continuity of $q_j$ imply that $q_j(\mu^k)\geq\delta$ for some $\delta>0$  for all $k\geq0$ and $j\in[m]$. The R-linear convergence of $\{|u^k_j/q_j(\mu^k)|\}_{k\geq1}$ directly follows. We complete the proof of Theorem \ref{th:linear_DCA}.
 
\section{Smoothed Gradient Method with Rounding Procedure}\label{sec:sgr}
Although the DCA addresses limitation (L1), its subproblem remains an optimization problem similar to those of EPM and GFW methods. To address (L2), we propose a subproblem-free method to compute approximate CE. The key is to find a suitable smooth approximation of the nonsmooth part of \eqref{eq:DC}. Specifically, we smooth the $\max$ terms in \eqref{eq:DC} via entropy regularization with parameter $\delta>0$:
\begin{equation}\label{eq:max_smooth}
    \min\limits_{\mu\in\R^m}\quad \mathlarger{\sum}\limits_{i\in[n]}B_i \max_{\mathbf{\lambda}_{i}\in\Delta_m }\left\{\mathlarger{\sum}_{j\in[m]}\lambda_{ij}\left(\mu_j-\log(d_{ij}) \right) -\delta \lambda_{ij}\log(\lambda_{ij}) \right\} -\mathlarger{\sum}\limits_{i\in[n]}B_i\log\left(\mathlarger{\sum}\limits_{j\in[m]}e^{\mu_j}\right).
\end{equation}
Here, the inner maximization problem has the closed-form solution  \[\lambda_{ij}^*=\frac{e^{\frac{\mu_j-\log(d_{ij})}{\delta}} }{\sum_{j\in[m]}e^{\frac{\mu_j-\log(d_{ij})}{\delta}} },\quad\ i\in[n],j\in[m].\]
Substituting the optimal $\lambda^*$ into \eqref{eq:max_smooth}, we obtain the equivalent problem:
\begin{equation}\label{eq:SP}
    \min\limits_{\mu\in\R^m}\quad F_{\delta}(\mu)\coloneqq \delta\mathlarger{\sum}\limits_{i\in[n]}B_i  \log\left(\mathlarger{\sum}_{j\in[m]} e^{\frac{\mu_j-\log(d_{ij})}{\delta}} \right)  -\mathlarger{\sum}\limits_{i\in[n]}B_i\log\left(\mathlarger{\sum}\limits_{j\in[m]}e^{\mu_j}\right). \tag{SP}
\end{equation} 
Clearly,  $F\leq F_{\delta}\leq F+\delta\log(m) \sum_{i\in[n]}B_i$ by the $\max$ operator in \eqref{eq:max_smooth} and $0\leq -\sum_{j\in[m]}\delta\lambda_{ij}\log(\lambda_{ij})\leq \delta\log(m)$ for $\lambda_i\in\Delta_m$. It follows that \eqref{eq:SP} is a smooth approximation of \eqref{eq:DC}. Now, two natural questions arise: (i) How do we derive an $\epsilon$-CE from \eqref{eq:SP}? (ii) Does the function $F_\delta$ possess any favorable properties?
For question (ii), consider the gradient $\nabla F_{\delta}(\mu)$, whose $j$-th component is
\[\nabla_jF_{\delta}(\mu)=\mathlarger{\sum}\limits_{i\in[n]}B_i\left(\frac{e^{\frac{\mu_j-\log(d_{ij})}{\delta}}}{\sum\limits_{j\in[m]} e^{\frac{\mu_j-\log(d_{ij})}{\delta}} } \right)-\mathlarger{\sum}\limits_{i\in[n]}B_i\left(\frac{e^{\mu_j}}{{\sum}_{j\in[m]}e^{\mu_j}}\right). \]
By direct computation, we see that $\nabla F_{\delta}$ is $\sum_{i\in[n]}B_i(1/\delta+1)$-Lipschitz continuous and $\nabla F_{\delta}(\mu)$ can be evaluated in $\cO(mn)$ time. We summarize the properties of $F_{\delta}$ as follows:
\begin{Fact}[Properties of $F_{\delta}$]\label{fact:F_epsilon}  The following properties hold:
\begin{enumerate}[{\rm (i)}]
    \item $F\leq F_{\delta}\leq F+\delta\log(m) \sum_{i\in[n]}B_i$;
    \item $\nabla F_{\delta}(\mu)$ can be evaluated in $\cO(mn)$ time;
    \item $\nabla F_{\delta}$ is $\sum_{i\in[n]}B_i(1/\delta+1)$-Lipschitz continuous.
\end{enumerate}
\end{Fact}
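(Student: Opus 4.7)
The plan is to prove the three claims separately, each by direct calculation; none requires substantive new ideas, and the main work lies in a clean Hessian computation for part (iii).

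For (i), the two-sided sandwich would follow from the standard log-sum-exp bound $\max_j a_j \leq \delta \log \sum_j e^{a_j/\delta} \leq \max_j a_j + \delta \log m$ applied with $a_j = \mu_j - \log(d_{ij})$ for each $i \in [n]$. The lower bound is immediate from $\sum_j e^{a_j/\delta} \geq e^{\max_j a_j/\delta}$, while the upper bound uses $\sum_j e^{a_j/\delta} \leq m \cdot e^{\max_j a_j/\delta}$. Multiplying by $B_i$, summing over $i \in [n]$, and subtracting $\ell(\mu)$ from both sides then yields (i). Equivalently, I can observe that in \eqref{eq:max_smooth} the linear term attains its pointwise maximum $\max_j\{\mu_j - \log(d_{ij})\}$ at a simplex vertex (where the entropy vanishes), while $-\sum_j \lambda_{ij} \log \lambda_{ij}$ is non-negative and bounded above by $\log m$ on $\Delta_m$.

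For (ii), I would count arithmetic operations. For each agent $i \in [n]$, assembling the softmax vector $p^i \in \Delta_m$ with entries $p^i_j = e^{(\mu_j - \log(d_{ij}))/\delta}/\sum_k e^{(\mu_k - \log(d_{ik}))/\delta}$ costs $O(m)$ flops, so the aggregate weighted sum $\sum_i B_i p^i$ requires $O(mn)$ time in total. The second component of $\nabla F_\delta$, namely $(\sum_i B_i) \cdot q$ with $q_j = e^{\mu_j}/\sum_k e^{\mu_k}$, costs only $O(m)$. Subtracting produces $\nabla F_\delta(\mu)$ in $O(mn)$ time overall.

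For (iii), I would write $F_\delta = \sum_{i \in [n]} B_i \phi_i - \ell$ with $\phi_i(\mu) = \delta \log \sum_j e^{(\mu_j - \log(d_{ij}))/\delta}$ and compute the log-sum-exp Hessians
\[ \nabla^2 \phi_i(\mu) = \tfrac{1}{\delta} \bigl( \diag(p^i) - p^i (p^i)^\top \bigr), \qquad \nabla^2 \ell(\mu) = \Bigl( \sum_{i \in [n]} B_i \Bigr)\bigl( \diag(q) - q q^\top \bigr). \]
Each matrix $\diag(p) - pp^\top$ with $p \in \Delta_m$ is the covariance matrix of a probability distribution on $[m]$, hence positive semidefinite and dominated in Loewner order by $\diag(p) \preceq I$; in particular its operator norm is at most $1$. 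Applying the triangle inequality for the operator norm to $\nabla^2 F_\delta = \sum_i B_i \nabla^2 \phi_i - \nabla^2 \ell$, I obtain $\|\nabla^2 F_\delta(\mu)\| \leq \bigl(\sum_{i \in [n]} B_i\bigr)(1/\delta + 1)$ uniformly in $\mu$, which yields the stated Lipschitz constant for $\nabla F_\delta$ via the mean value theorem.

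The only non-routine step is the Hessian derivation in (iii); one might worry that the triangle inequality is wasteful, since the two Hessians enter with opposite signs and could in principle partially cancel. However, the target Lipschitz constant is already a straightforward sum of the individual operator-norm bounds $\sum_i B_i/\delta$ and $\sum_i B_i$, so the triangle inequality suffices and no further refinement is needed.
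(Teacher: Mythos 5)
Your proposal is correct and follows essentially the same route as the paper, which justifies (i) via the entropy/max interpretation of \eqref{eq:max_smooth}, (ii) by reading off the explicit gradient formula, and (iii) by a "direct computation" that amounts to exactly your log-sum-exp Hessian bounds $0 \preceq \diag(p)-pp^\top \preceq I$ combined with the triangle inequality. You merely spell out the standard details the paper leaves implicit, so nothing further is needed.
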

\begin{algorithm}[htbp]
	\caption{Round$(a,\mu^0,b)$ }
	\begin{algorithmic}[1]
		\REQUIRE{ Parameter $a\in(-\infty,\underline{\mu_{\delta}}]$, initial point $\mu^0\in\R^m$, $b=\sum_{i\in[n]}B_i$  }
        \STATE $k=1$, $\mu^{out}=\mu^0$, $J_0=\{j:q_j(\mu^0)<e^{a}\}$, $\mu^1_j=\max_{j\in J_{0}}\{\mu^0_j\}$ for $j\in J_0$
        \WHILE{ $J_{k-1}\neq\emptyset$ }
        \STATE  ${\rm thres }_k= \log(\sum_{j\in[m]\setminus J_{k-1}}e^{\mu^k_j})-\log(b\cdot e^{-a }-|J_{k-1}| )$
		\STATE $\Delta J_k = \argmin_{j}\{\mu^k_j:\mu^k_j<{\rm thres}_k,j\notin J_{k-1} \}$
        \IF{ $\Delta J_k\neq\emptyset$ }
        \STATE $c_k=\mu^k_j$ for $j\in\Delta J_k$
         \STATE $\mu^{k+1}_j=\max\{\mu^k_j,c_k\}$ \text{ for }$j\in [m]$
         \STATE  $J_k=J_{k-1}\cup \Delta J_k$, $k=k+1$
         \ELSE
         \STATE $\mu^{new}_j=\max\{\mu^k_j, {\rm thres}_k \}$ for $j\in [m]$
         \STATE $d=\1^{\top}(\mu^0-\mu^{new})/m$ and $\mu^{out}=\mu^{new}+d\1$ 
         \STATE \textbf{break}
        \ENDIF
		\ENDWHILE{ }
		\ENSURE{$\mu^{out}$}
	\end{algorithmic}
	\label{al:round} 
\end{algorithm}
For question (i), we develop the relation between approximate CE and approximate stationary points of \eqref{eq:SP} (which refer to, roughly speaking, the points $\mu\in\R^m$ with small gadient norm $\|\nabla F_\delta(\mu)\|$). We recall the functions
$q_j(\mu)\coloneqq \sum_{i\in[n]}B_i{{e^{\mu_j}}}/{\sum_{j\in[m]}e^{\mu_j}}$, $j\in[m]$ and present the following lemma.
\begin{lemma}\label{le:SP_CE}
       Let the parameter $\epsilon>0$ be fixed. Suppose that the vector $\mu\in\R^m$ and the parameter $\delta>0$ satisfy 
  \[\left|\frac{\nabla_j F_{\delta}(\mu)}{q_j(\mu)}\right|\leq \epsilon\ \text{for all}\ j\in[m];\quad  \epsilon\geq (1.3+\log(m-1))\delta.\]
      Then, the pair $(p,v)$ with $p_j=q_j(\mu)$ and $x_{ij}=v_{ij}/p_j$ is an $\epsilon$-CE, where $v_{ij}={B_ie^{\frac{\mu_j-\log(d_{ij})}{\delta}} }/{{\sum}_{j\in[m]} e^{\frac{\mu_j-\log(d_{ij})}{\delta } } }.$
\end{lemma}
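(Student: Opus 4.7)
}
The plan is to verify the three conditions (A1), (A2), (A3) of Definition \ref{def:aCE} for the constructed price--allocation pair $(p,x)$. Conditions (A1) and (A3) should follow quickly from the explicit form of $v_{ij}$ and a short computation of $\nabla F_\delta$; the nontrivial step, in which the hypothesis $\epsilon \geq (1.3 + \log(m-1))\delta$ really enters, is (A2).

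First, a direct calculation from the definition of $F_\delta$ gives $\nabla_j F_\delta(\mu) = \sum_i v_{ij} - q_j(\mu)$. With $p_j = q_j(\mu)$ and $x_{ij} = v_{ij}/p_j$, the hypothesis $|\nabla_j F_\delta(\mu)/q_j(\mu)| \leq \epsilon$ rewrites as $|\sum_i x_{ij} - 1|\leq \epsilon$, which is exactly (A3) since $1+\epsilon \leq (1-\epsilon)^{-1}$. For (A1), the softmax probabilities $v_{ij}/B_i$ sum to one in $j$, hence $p^\top x_i = \sum_j v_{ij} = B_i$ and (A1) in fact holds with equality.

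The core of the proof is (A2). Since the individual problem $\min\{d_i^\top y_i : p^\top y_i \geq p^\top x_i,\ y_i \geq 0\}$ is an LP whose optimum concentrates on $\argmax_j(p_j/d_{ij})$, condition (A2) is equivalent to $d_i^\top x_i \leq (1-\epsilon)^{-1} B_i \min_j(d_{ij}/p_j)$. Introducing $\alpha_{ij} \coloneqq (\mu_j - \log d_{ij})/\delta$ and the nonnegative gap $\gamma_{ij} \coloneqq \max_{j'}\alpha_{ij'} - \alpha_{ij}$, and using $p_j \propto e^{\mu_j}$ together with $v_{ij} \propto e^{\alpha_{ij}}$ and $d_{ij}/p_j \propto e^{-\delta\alpha_{ij}}$, a direct manipulation in which all unknown scale factors cancel reduces (A2) to the scalar inequality
\[
(1-\epsilon)\sum_{j\in[m]} e^{-\gamma_{ij}(1-\delta)} \;\leq\; \sum_{j\in[m]} e^{-\gamma_{ij}}.
\]

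To close the argument, I would isolate the indices with $\gamma_{ij}=0$ (they contribute equally to both sides) and bound the remaining at most $m-1$ terms by studying the one-variable function $h(\gamma) = e^{-\gamma(1-\delta)} - (1-\epsilon)^{-1} e^{-\gamma}$ on $\gamma \geq 0$. Solving $h'(\gamma)=0$ gives the unique interior maximizer $\gamma^\star = \delta^{-1}\log\bigl((1-\delta)^{-1}(1-\epsilon)^{-1}\bigr)$ and the closed-form peak $h(\gamma^\star) = \delta\bigl((1-\delta)(1-\epsilon)\bigr)^{1/\delta - 1}$, so it suffices to verify $(m-1)\,h(\gamma^\star) \leq \epsilon/(1-\epsilon)$. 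Taking logarithms and applying $\log(1-\epsilon)\leq -\epsilon$ together with $(1-\delta)^{1/\delta}\leq e^{-1}$ converts this into a scalar estimate of the form $r + \log r \geq \log(m-1) - 1 + O(\delta)$ in $r \coloneqq \epsilon/\delta$, which is valid precisely when $r \geq 1.3 + \log(m-1)$. The main obstacle will be organizing this final chain of scalar bounds so that the additive constant $1.3$ genuinely absorbs both the $(1-\delta)^{1/\delta}$ correction and the slack in $\log(1-\epsilon)\leq -\epsilon$, especially when $\delta$ is not negligibly small.
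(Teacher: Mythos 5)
Your proposal is correct, and for the crucial condition (A2) it follows a genuinely different route from the paper. Both arguments treat (A1) and (A3) identically and reduce (A2) to $(1-\epsilon)\,d_i^{\top}x_i\leq B_i/\max_{j}\{p_j/d_{ij}\}$; from there the paper writes $d_i^{\top}x_i=B_i\sum_{j}(p_j/d_{ij})^{1/\delta-1}/\sum_{j}(p_j/d_{ij})^{1/\delta}$ and invokes a standalone multivariate inequality (Lemmas \ref{le:tech0}--\ref{le:tech}), bounding $\sum_j a_j^{\gamma-1}/\sum_j a_j^{\gamma}$ by $\frac{1}{\max_j a_j}\cdot\frac{(m-1)^{1/\gamma}}{1-1.3/\gamma}$ via an equal-coordinates argument plus one-variable calculus, and then converts the hypothesis into $\frac{(m-1)^{\delta}}{1-1.3\delta}\leq\frac{1}{1-\epsilon}$. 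You instead normalize by the maximal index, so that (A2) becomes $\sum_j h(\gamma_{ij})\leq 0$ with $h(\gamma)=e^{-(1-\delta)\gamma}-(1-\epsilon)^{-1}e^{-\gamma}$, and bound each of the at most $m-1$ nonmaximal terms by the peak value $h(\gamma^\star)=\delta\bigl((1-\delta)(1-\epsilon)\bigr)^{1/\delta-1}$, leaving the sufficient condition $(m-1)h(\gamma^\star)\leq\epsilon/(1-\epsilon)$. I verified this closes: with $r=\epsilon/\delta$, the elementary bounds $-\log(1-\epsilon)\geq\epsilon$ and $-\log(1-\delta)\geq\delta$ reduce it to $r+\log r\geq\log(m-1)-1+\delta$, which indeed follows from $r\geq 1.3+\log(m-1)$ (with considerable slack, so ``valid precisely when'' overstates tightness). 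Your per-term peak bound is more elementary, avoiding the multivariate optimization of Lemma \ref{le:tech0} entirely, at the cost of a looser estimate; the paper's collective bound is what makes the constant $1.3$ sharp, whereas in your route it merely needs to be large enough. Two minor fixes: the indices with $\gamma_{ij}=0$ do not ``contribute equally to both sides'' (they contribute $1-\epsilon$ versus $1$); what your displayed condition actually uses, correctly, is that the maximal index supplies the negative slack $h(0)=-\epsilon/(1-\epsilon)$. Also, as in the paper, the argument implicitly needs $\epsilon<1$ and $\delta<1$ for $(1-\epsilon)^{-1}$ and the logarithms to make sense.
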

By Lemma \ref{le:SP_CE}, we need to seek a vector $\mu$ satisfying $\max_{j\in[m]}|\nabla_j F_{\delta}(\mu)/q_j(\mu)|\leq\epsilon$ to obtain an $\epsilon$-CE. A candidate algorithm should not only search for $\mu$ with small gradient norm $\|\nabla F_{\delta}(\mu)\|$ but also provide lower bounds on $\{q_j(\mu^k)\}_{k\geq0}$ for its iterates $\{\mu^k\}_{k\geq0}$. To obtain such lower bounds, we design a rounding procedure that rounds $\mu$ into $\hat{\mu}$ satisfying $q_{j}(\hat{\mu})\geq e^a,j\in[m]$ for some $a\in\R$ without increasing the function value. We first present an observation that serves as the basis of the rounding.
\begin{Fact}\label{fact:underlined2}
   Suppose that $\delta\leq 1/(2+\log(m-1))$ and there is an index $j_0\in[m]$ such that $q_{j_0}(\mu)<e^{\underline{\mu_{\delta}} }$, where
    \[\underline{\mu_{\delta}}\coloneqq\log\left(\frac{\sum_{i\in[n]}B_i }{2m}\right)-\frac{1+\delta}{1-\delta}\log\left(\frac{\max_{i,j}\{d_{ij}\}}{\min_{i,j}\{d_{ij}\}}\right) -\delta \log(4m).\]
    Then, the $j_0$-th coordinate of approximate function gradient is negative, i.e.,  $\nabla_{j_0}F_{\delta}(\mu)<0$.
\end{Fact}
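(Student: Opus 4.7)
The plan is to work directly with the decomposition
\begin{equation*}
\nabla_{j_0}F_\delta(\mu) \;=\; \underbrace{\sum_{i\in[n]}B_i\frac{e^{(\mu_{j_0}-\log(d_{ij_0}))/\delta}}{\sum_{k\in[m]}e^{(\mu_k-\log(d_{ik}))/\delta}}}_{=:\;A_{j_0}(\mu)} \;-\; q_{j_0}(\mu),
\end{equation*}
obtained by direct differentiation of $F_\delta$, and to show that the hypothesis forces $A_{j_0}(\mu)<q_{j_0}(\mu)$. Throughout, I would abbreviate $B=\sum_{i\in[n]}B_i$, $D_{\max}=\max_{i,j}d_{ij}$, $D_{\min}=\min_{i,j}d_{ij}$, $\mu_{\max}=\max_{k\in[m]}\mu_k$, $L=\log(D_{\max}/D_{\min})\ge 0$, and introduce the key quantity $\Delta=\mu_{\max}-\mu_{j_0}\ge 0$.

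The first step converts the hypothesis into a lower bound on $\Delta$. Since $\sum_{k\in[m]}e^{\mu_k}\le m\,e^{\mu_{\max}}$, one has $q_{j_0}(\mu)\ge (B/m)e^{-\Delta}$, and combining this with $q_{j_0}(\mu)<e^{\underline{\mu_{\delta}}}$ and substituting the definition of $\underline{\mu_{\delta}}$ gives
\begin{equation*}
\Delta \;>\; \log(B/m)-\underline{\mu_{\delta}} \;=\; \log 2 \,+\, \tfrac{1+\delta}{1-\delta}L \,+\, \delta\log(4m).
\end{equation*}
The second step upper-bounds $A_{j_0}(\mu)$. Keeping only the $k^*=\argmax_{k}\mu_k$ term in each denominator gives $\sum_{k}e^{(\mu_k-\log(d_{ik}))/\delta}\ge e^{(\mu_{\max}-\log D_{\max})/\delta}$, and combining this with $\log d_{ij_0}\ge \log D_{\min}$ yields $A_{j_0}(\mu)\le B(D_{\max}/D_{\min})^{1/\delta}e^{-\Delta/\delta}$. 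Dividing by the lower bound on $q_{j_0}(\mu)$ produces $A_{j_0}(\mu)/q_{j_0}(\mu)\le m(D_{\max}/D_{\min})^{1/\delta}e^{-\Delta(1-\delta)/\delta}$, so $\nabla_{j_0}F_\delta(\mu)<0$ follows once $\Delta > (\delta\log m+L)/(1-\delta)$, a requirement I label $(\star)$.

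Third, I would verify that the bound on $\Delta$ from step one implies $(\star)$. Subtracting the two bounds and using $L\ge 0$, it suffices to prove $\log 2+\delta\log(4m)\ge \delta\log m/(1-\delta)$; multiplying through by $1-\delta$ and using $\log 4=2\log 2$ rearranges this to $\log 2\,(1+\delta-2\delta^2)\ge \delta^2\log m$. The bound $\delta\le 1/(2+\log(m-1))\le 1/2$ makes $1+\delta-2\delta^2\ge 1$, so it further suffices that $\delta^2\log m\le \log 2$; this follows from $\delta^2\log m\le \log m/(2+\log(m-1))^2$ combined with the elementary estimate $\log m\le \log 2+\log(m-1)\le \log 2\,(2+\log(m-1))^2$ (valid for $m\ge 2$ since $4\log 2>1$). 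The main obstacle is this calibration: the three contributions $\log 2$, $\delta\log(4m)$, and $\tfrac{1+\delta}{1-\delta}L$ hard-coded in $\underline{\mu_{\delta}}$ are tuned so that the $\log 2+\delta\log(4m)$ margin exactly absorbs the overhead $\delta^2\log m$ produced by retaining only the dominant term in each $T_i$, and the hypothesis $\delta\le 1/(2+\log(m-1))$ is precisely what makes this absorption hold uniformly in $m$.
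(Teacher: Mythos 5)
Your proof is correct, and it takes a genuinely different route from the paper's. Both arguments start from the same decomposition $\nabla_{j_0}F_\delta(\mu)=A_{j_0}(\mu)-q_{j_0}(\mu)$, but the paper then reduces, uniformly in $i$ (replacing $\log d_{ij_0}-\log d_{ij}$ by $-L$), to the single inequality $\sum_{j}e^{(\mu_j-\mu_{j_0}-L)/\delta}>\sum_{j}e^{\mu_j-\mu_{j_0}}$ and proves it by partitioning $[m]$ into a ``small'' set $J_0=\{j:\mu_j-\mu_{j_0}\le L/(1-\delta)+\log 2\}$ and its complement $J_1$, showing $J_1\neq\emptyset$ via an index $j_1$ with $q_{j_1}(\mu)\ge\sum_i B_i/m$, and letting the one term at $j_1$ (with a factor $\tfrac12$ to spare) dominate the loss $-|J_0|\,e^{L/(1-\delta)+\log 2}\ge -2m e^{L/(1-\delta)}$ from the small set. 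You instead collapse everything onto a single scalar $\Delta=\mu_{\max}-\mu_{j_0}$: keep only the $\mu_{\max}$-coordinate in each softmax denominator to get $A_{j_0}(\mu)\le B\,e^{(L-\Delta)/\delta}$, lower-bound $q_{j_0}(\mu)\ge (B/m)e^{-\Delta}$, and check that the lower bound on $\Delta$ extracted from $q_{j_0}(\mu)<e^{\underline{\mu_{\delta}}}$ beats the threshold $(\delta\log m+L)/(1-\delta)$; your final calibration $\log 2\,(1+\delta-2\delta^2)\ge\delta^2\log m$ under $\delta\le 1/(2+\log(m-1))$ is verified correctly (including strictness, since the step-one bound on $\Delta$ is strict). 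What your version buys is brevity and transparency: it is a one-inequality argument that makes explicit how each constant in $\underline{\mu_{\delta}}$ is consumed (the $\tfrac{1+\delta}{1-\delta}L$ term absorbs the disutility spread, the $\log 2+\delta\log(4m)$ margin absorbs the $\delta\log m/(1-\delta)$ overhead of discarding all but the dominant term). What the paper's partition argument buys is that it retains the full denominator sums rather than a single term, which is slightly less lossy and mirrors the threshold mechanics reused in the rounding procedure (Algorithm 1 and Lemma C.3); but for the stated fact, with the constants as given, both proofs are complete and valid.
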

By Fact \ref{fact:underlined2} and the mean value theorem, if $q_j(\mu)<e^{\underline{\mu_{\delta}}}$ for some $j\in[m]$, then  $\mu_j$ can be increased to $\underline{\mu_{\delta}}$ without increasing the value of $F_{\delta}$. Further, intuitively, for every $\mu\in\R^m$, we can find an associated $\hat{\mu}$ such that $F_{\delta}(\hat{\mu})\leq F_{\delta}(\mu)$ and $q_j(\hat{\mu})\geq e^{\underline{\mu_{\delta}}}$ by increasing the coordinates $\mu_j$ to a \textit{threshold} for those indices $j$ that satisfy $q_j(\mu)<e^{\underline{\mu_{\delta}}}$.
This intuition leads us to the rounding procedure in Algorithm \ref{al:round}. Algorithm \ref{al:round} first identifies the indices $j$ that satisfy $q_j(\mu^0)<e^a$ to form an index set $J_0$, where $\mu^0\in\R^m$ is the input and $a\in\R$ is a constant smaller than ${\underline{\mu_{\delta}}}$. Then, Algorithm \ref{al:round} aims to increase $\mu_j,j\in J_0$ to a threshold number ${\rm thres}_0\in\R$ so that $q_j(\mu)\geq e^a$ for $j\in J_0$. However, such an aggressive increase in $\mu_j,j\in J_0$  may cause $q_j(\mu)< e^a$ for some $j\in[m]\setminus J_0$. To avoid this, the coordinates $\mu_j,j\in J_0$ are increased without exceeding the minimal value of $\mu_j,j\in [m]\setminus J_0$. By repeating this process, within $m$ steps Algorithm \ref{al:round} will stop and provide a desired output.
\begin{lemma}\label{le:round}
Consider $\delta$ and $\underline{\mu_{\delta}}$ in Fact \ref{fact:underlined2}.
Then,  Algorithm \ref{al:round} stops in $m$ steps and outputs a vector $\mu\in\R^m$ that satisfies
\begin{enumerate}[\rm (i)]   
    \item $\1^{\top}\mu=\1^{\top}\mu^0$;
    \item  $F_{\delta}(\mu)\leq F_{\delta}(\mu^0)$; 
    \item $q_j(\mu)\geq e^{a}$ for all $j\in[m]$.
\end{enumerate}
\end{lemma}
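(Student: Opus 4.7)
The plan is to verify termination and properties (i)--(iii) in turn, with the bulk of the work devoted to (ii). Termination is immediate: each execution of the if-branch enlarges $J_{k-1}$ by the non-empty set $\Delta J_k$, so $|J_k|$ grows by at least one per iteration; since $|J_k|\le m$, the else-branch must trigger within $m$ iterations. Property (i) follows from the choice $d=\1^{\top}(\mu^0-\mu^{new})/m$, which forces $\1^{\top}\mu^{out}=\1^{\top}\mu^{new}+md=\1^{\top}\mu^0$. For (iii), substituting the termination-step identity $e^{{\rm thres}_k}=A/B$ (with $A\coloneqq\sum_{j'\notin J_{k-1}}e^{\mu^k_{j'}}$ and $B\coloneqq be^{-a}-|J_{k-1}|$) into $q_j(\mu^{new})$ gives $\sum_{j'}e^{\mu^{new}_{j'}}=A\cdot be^{-a}/B$, whence $q_j(\mu^{new})=e^a$ for $j\in J_{k-1}$ and $q_j(\mu^{new})\ge e^a$ for $j\notin J_{k-1}$ (here $\mu^k_{j'}\ge{\rm thres}_k$ because $\Delta J_k=\emptyset$); shift invariance $q_j(\mu+t\1)=q_j(\mu)$ then passes this bound to $\mu^{out}$.

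The heart of the proof is (ii), which I approach through the invariant that at the start of while-iteration $k$, the coordinates in $J_{k-1}$ share a common value $v_{k-1}$ satisfying $v_{k-1}<{\rm thres}_k$---equivalently $q_j(\mu^k)<e^a$ for $j\in J_{k-1}$. At initialization, choosing $j^*\in\argmax_{j'\in J_0}\mu^0_{j'}$ and using $|J_0|e^{v_0}\ge\sum_{j'\in J_0}e^{\mu^0_{j'}}$ yields $q_j(\mu^1)\le q_{j^*}(\mu^0)<e^a$. The induction step reduces to the algebraic identity $c_k<{\rm thres}_k\iff c_k<{\rm thres}_{k+1}$: after the update, the new $A'=A-|\Delta J_k|e^{c_k}$ and $B'=B-|\Delta J_k|$, and routine simplification shows $c_k<\log A'-\log B'$ is equivalent to $e^{c_k}B<A$, which is the former inequality.

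With the invariant in hand, I parametrize each raise-phase as a continuous path $t\mapsto\mu(t)$ lifting $\mu_j,\ j\in J_{k-1}$ uniformly; along it the formula $q_j(\mu(t))=b\big/\big[|J_{k-1}|+e^{-(v_{k-1}+t)}\sum_{j'\notin J_{k-1}}e^{\mu^0_{j'}}\big]$ is monotonically increasing in $t$, so $q_j(\mu(t))$ is bounded above by its endpoint value, which the invariant keeps below $e^a\le e^{\underline{\mu_{\delta}}}$. Fact \ref{fact:underlined2} then yields $\nabla_j F_{\delta}(\mu(t))<0$ for each raised coordinate, hence $\frac{d}{dt}F_{\delta}(\mu(t))\le 0$. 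The initial raise $\mu^0\to\mu^1$ is the subtle case since the coordinates in $J_0$ start at distinct heights; I decompose it into sub-phases ordered by sorting $\{\mu^0_j\}_{j\in J_0}$, each sub-phase lifting a group of currently-equal coordinates to the next sorted height, to which the same monotonicity-plus-Fact argument applies. The main obstacle is precisely this invariant-plus-monotonicity bookkeeping: the combinatorial identity governing the threshold transition and the analytic monotonicity of $q_j$ along every uplift must cooperate to sustain $q_j<e^a$ for the raised coordinates throughout. Finally, $F_{\delta}(\mu+t\1)=F_{\delta}(\mu)$ transfers the resulting descent from $\mu^{new}$ to $\mu^{out}$, proving (ii).
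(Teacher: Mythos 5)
Your proposal is correct and follows essentially the same route as the paper's proof: termination via the strict growth of $J_k$, (i) via the centering shift, (iii) via the threshold identity at the final step, and (ii) via the invariant that raised coordinates keep $q_j<e^a\leq e^{\underline{\mu_{\delta}}}$ along each uplift, so that Fact \ref{fact:underlined2} plus a mean-value/monotonicity argument and the shift invariance of $F_{\delta}$ give descent. The only differences are bookkeeping (you sustain the invariant through the explicit transition identity $c_k<{\rm thres}_k\iff c_k<{\rm thres}_{k+1}$ and split the initial raise into sorted sub-phases, whereas the paper uses the ordering invariant \eqref{eq:induction} with $c_k<{\rm thres}_k$ from the definition of $\Delta J_k$ and a single mean-value application), which does not change the substance of the argument.
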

Now, we are ready to give an algorithm, i.e., the SGR, to compute an $\epsilon$-CE from \eqref{eq:SP}. In each iteration, we perform a gradient descent step for \eqref{eq:SP} to decrease the value of $F_{\delta}$ and round the iterate to obtain lower bounds on $\{q_j(\hat{\mu}^k)\}_{k\geq0},j\in[m]$. When the stopping criterion $|\nabla_j F_{\delta}(\hat{\mu}^k)/q_j(\hat{\mu}^k)|<\epsilon$, $j\in[m]$  is met, we can recover an $\epsilon$-CE from the output $\hat{\mu}^k$ due to Lemma \ref{le:SP_CE}.
\begin{algorithm}[htbp]
	\caption{Smoothed Gradient Method with Rounding (SGR) }
	\begin{algorithmic}[1]
		\REQUIRE{ Parameters $\epsilon$, $\delta$, and $\underline{\mu_{\delta}}$ satisfying conditions of Lemma \ref{le:SP_CE} and Fact \ref{fact:underlined2}, initial point $\mu^0\in\R^m$, maximum iteration number $k_{max}$, stepsize $\eta$  }
        \STATE $k=0$, $a=\underline{\mu_{\delta}}$, $b=\sum_{i\in[n]}B_i$
        \WHILE{ $k\leq k_{max}$ }
        \STATE $\hat{\mu}^k=$Round$(a,\mu^k,b)$
        \IF{ $|\nabla_j F_{\delta}(\hat{\mu}^k)/q_j(\hat{\mu}^k)|<\epsilon$ for all $j\in[m]$ }
          \STATE  \textbf{break}
         \ENDIF
        \STATE $\mu^{k+1}=\hat{\mu}^k-\eta\nabla F_{\delta}(\hat{\mu}^k)$
        \STATE $k=k+1$
        \ENDWHILE{ }
		\ENSURE{$\hat{\mu}^k$}
	\end{algorithmic}
	\label{al:sgr} 
\end{algorithm}

By exploring the sufficient decrease property for the SGR, we can further give an upper bound on the iteration complexity of SGR.  
\begin{theorem}\label{th:SGM}
Suppose that the parameters $\epsilon,\delta,$ and $\eta$ of SGR satisfy $0<\epsilon<1/2$, 
$\delta=\frac{\epsilon}{1.3+\log(m-1)}, \text{ and } \eta= \frac{\gamma}{\sum_{i\in[n]}B_i}\frac{\delta}{1+\delta}$ with $0<\gamma<1$.
 Then, the SGR finds an $\epsilon$-CE in at most $\tilde{\cO}(\frac{m^2}{\epsilon^3})$ iterations, and the total complexity is at most $\tilde{\cO}(\frac{m^3(n+m)}{\epsilon^3})$.  
\end{theorem}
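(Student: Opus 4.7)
The plan is to produce a sufficient-descent inequality per SGR iteration and then convert it into an iteration-complexity bound, using Lemmas~\ref{le:SP_CE} and~\ref{le:round} together with the Lipschitz property of $\nabla F_\delta$. First, I would observe that Lemma~\ref{le:round}(ii) gives $F_\delta(\hat\mu^k)\le F_\delta(\mu^k)$, and that a standard descent-lemma argument for gradient descent with step $\eta=\gamma/L$ (where $L=\sum_{i\in[n]}B_i(1/\delta+1)$ is the Lipschitz constant of $\nabla F_\delta$ by Fact~\ref{fact:F_epsilon}(iii)) yields
\[
F_\delta(\mu^{k+1})\le F_\delta(\hat\mu^k)-\tfrac{\gamma(1-\gamma/2)}{L}\,\|\nabla F_\delta(\hat\mu^k)\|^2.
\]
Chaining the two inequalities gives $F_\delta(\mu^{k+1})\le F_\delta(\mu^k)-c_\delta\|\nabla F_\delta(\hat\mu^k)\|^2$ with $c_\delta=\Theta(\delta/\sum_iB_i)$.

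Next, I would use the stopping criterion. If the algorithm has not terminated at iteration $k$, then there is an index $j$ with $|\nabla_jF_\delta(\hat\mu^k)|>\epsilon\,q_j(\hat\mu^k)$. By Lemma~\ref{le:round}(iii), the rounded iterate satisfies $q_j(\hat\mu^k)\ge e^{\underline{\mu_\delta}}$ for every $j$, hence $\|\nabla F_\delta(\hat\mu^k)\|^2\ge \epsilon^2 e^{2\underline{\mu_\delta}}$. Substituting into the descent inequality yields a uniform per-iteration decrease of order $\delta\epsilon^2 e^{2\underline{\mu_\delta}}/\sum_iB_i$. A routine telescoping then gives
\[
k \le \frac{F_\delta(\mu^0)-\inf F_\delta}{c_\delta\,\epsilon^2 e^{2\underline{\mu_\delta}}}.
\]
For the numerator, I would use Fact~\ref{fact:F_epsilon}(i) to write $\inf F_\delta\ge \inf F$; the existence of a CE (Theorem~\ref{th:DC}) and the translation invariance of $F$ (Fact~\ref{fact:homo}) ensure $\inf F>-\infty$, so the numerator is $O(1+\delta\log(m)\sum_iB_i)=O(1)$ up to fixed market data.

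Finally, plugging in $\delta=\Theta(\epsilon/\log m)$ and the explicit form of $\underline{\mu_\delta}$ from Fact~\ref{fact:underlined2} gives $e^{2\underline{\mu_\delta}}=\Theta(m^{-2})$ up to logarithmic and market-dependent factors. The per-iteration descent is therefore $\Theta(\epsilon^3/(m^2\log m))$, leading to an iteration count of $\tilde{\mathcal O}(m^2/\epsilon^3)$. Combining with the per-iteration cost $\mathcal O(mn)$ for $\nabla F_\delta$ (Fact~\ref{fact:F_epsilon}(ii)) and $\mathcal O(m^2)$ for Algorithm~\ref{al:round} (Lemma~\ref{le:round}), the total arithmetic complexity becomes $\tilde{\mathcal O}(m^3(n+m)/\epsilon^3)$, and correctness of the recovered pair follows directly from Lemma~\ref{le:SP_CE}.

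The main obstacle I anticipate is the bookkeeping that turns the abstract bound $F_\delta(\mu^0)-\inf F_\delta$ into a clean, $\delta$-independent constant: one must verify that $\inf F_\delta$ is uniformly lower-bounded as $\delta\to 0$ (so that the leading constant does not blow up), and carefully track how the $\delta\log(4m)$ term in $\underline{\mu_\delta}$ merges into the logarithmic factors. A secondary delicate point is justifying that gradient descent can be applied with a fixed step size despite SGR replacing $\mu^k$ by $\hat\mu^k$ before the step; this requires invoking the descent lemma at the rounded point and noting that Lemma~\ref{le:round} keeps $F_\delta$ non-increasing, so the usual monotonicity argument still closes the telescoping sum.
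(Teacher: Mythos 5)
Your proposal follows essentially the same route as the paper's proof: a descent-lemma step at the rounded iterate with stepsize $\gamma/L$, monotonicity of $F_\delta$ under rounding (Lemma \ref{le:round}(ii)), the gradient-norm threshold $\epsilon\, e^{\underline{\mu_{\delta}}}$ obtained from $q_j(\hat\mu^k)\ge e^{\underline{\mu_{\delta}}}$ (Lemma \ref{le:round}(iii) with Lemma \ref{le:SP_CE}), and telescoping against the range of $F_\delta$, which after plugging in $\delta=\Theta(\epsilon/\log m)$ gives $\tilde{\cO}(m^2/\epsilon^3)$ iterations and the stated total cost. The only soft spot is your justification that $\inf F>-\infty$: existence of a stationary point plus translation invariance does not by itself yield lower-boundedness, but the paper's Fact \ref{fact:F} (or the elementary estimate $\max_j\{\mu_j-\log d_{ij}\}\ge\max_j\mu_j-\max_{i,j}\log d_{ij}$ combined with $\log\sum_j e^{\mu_j}\le\log m+\max_j\mu_j$) supplies exactly the explicit, $\delta$-independent bound your telescoping needs.
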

Though the iteration complexity of SGR is higher than that of the GFW method, the SGR only computes the gradient of $F_{\delta}$ (at a time cost of $\cO(nm)$) and rounds the iterate (at a time cost of $\cO(m^2)$) in each iteration. By contrast, the GFW method needs to solve an LP with $mn$ constraints, which requires $\cO((mn)^{3.5})$ time \cite{vavasis1996primal}. Therefore, SGR offers a reasonable trade-off. We emphasize that the SGR is the first algorithm that computes approximate CE for chores without solving other optimization problems as subproblems, making it especially appealing in high-dimensional settings.
\section{Numerical Experiments}\label{sec:numerical}
We compare the numerical performance of the proposed DCA and SGR with the state-of-the-art GFW method proposed in \cite{chaudhury2024competitive}. Specifically, we evaluate the time they spend to compute an $\epsilon$-CE for chores, especially in \textit{high-dimensional} settings. We use different distributions to generate i.i.d. disutilities $d_{ij}$ and expected amounts $B_i$, where $i\in[n],j\in[m]$. These distributions include the uniform distribution on $(0,1]$, log-standard-normal distribution, exponential distribution with the scale parameter $1$, and uniform random integers on $\{1,2, \ldots, 1000\}$. We run all experiments on a personal desktop that uses the Apple M3 Pro Chip and has 18GB RAM. We use Gurobi version 11.0.0 to solve GFW subproblems. See Appendix \ref{appen:additional_exp} for more details.

First, we set $m=n$ and demonstrate the algorithms' CPU time in Figure \ref{fig:low_precision}. Figure \ref{fig:low_precision} shows that the CPU time of DCA and SGR is significantly lower than that of GFW: For all tested $n$, the CPU time of DCA (resp. SGR) does not exceed $50$ (resp. $10$) seconds, while the GFW may need $200+$ seconds. Moreover, the CPU time of SGR increases very slightly compared with the GFW as $n$ grows, which is consistent with our theory. Then, we consider the imbalanced-dimension case, where the ratio $n/m$ is large. In this setting, the superiority of our algorithms is even clearer: The DCA (resp. SGR) can be $7$ (resp. $50$) times faster than the GFW method. These results validate the practicality and efficiency of our algorithms. 
\begin{figure}[htbp]
    \centering
    \captionsetup[subfloat]{labelformat=empty} 
    \subfloat[Uniform]{\includegraphics[scale=0.22]{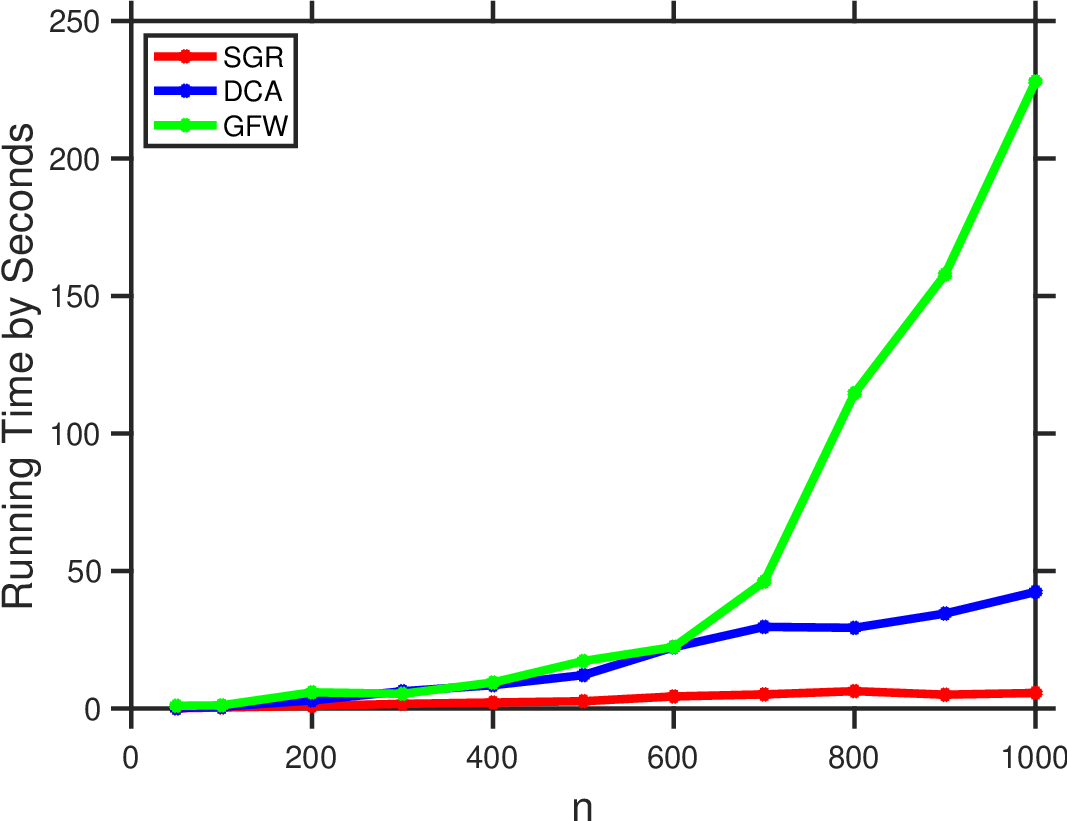}}\hspace{1mm}
    \subfloat[Log-Normal]{\includegraphics[scale=0.22]{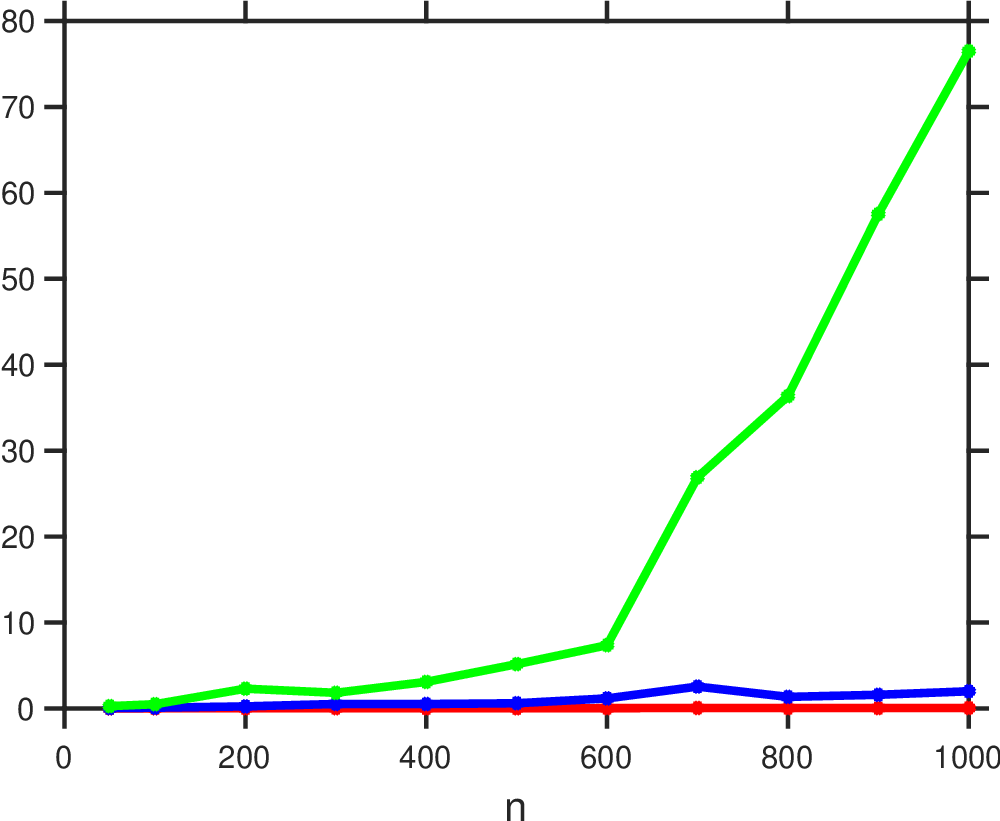}}\hspace{1mm}
    \subfloat[Exponential]{\includegraphics[scale=0.22]{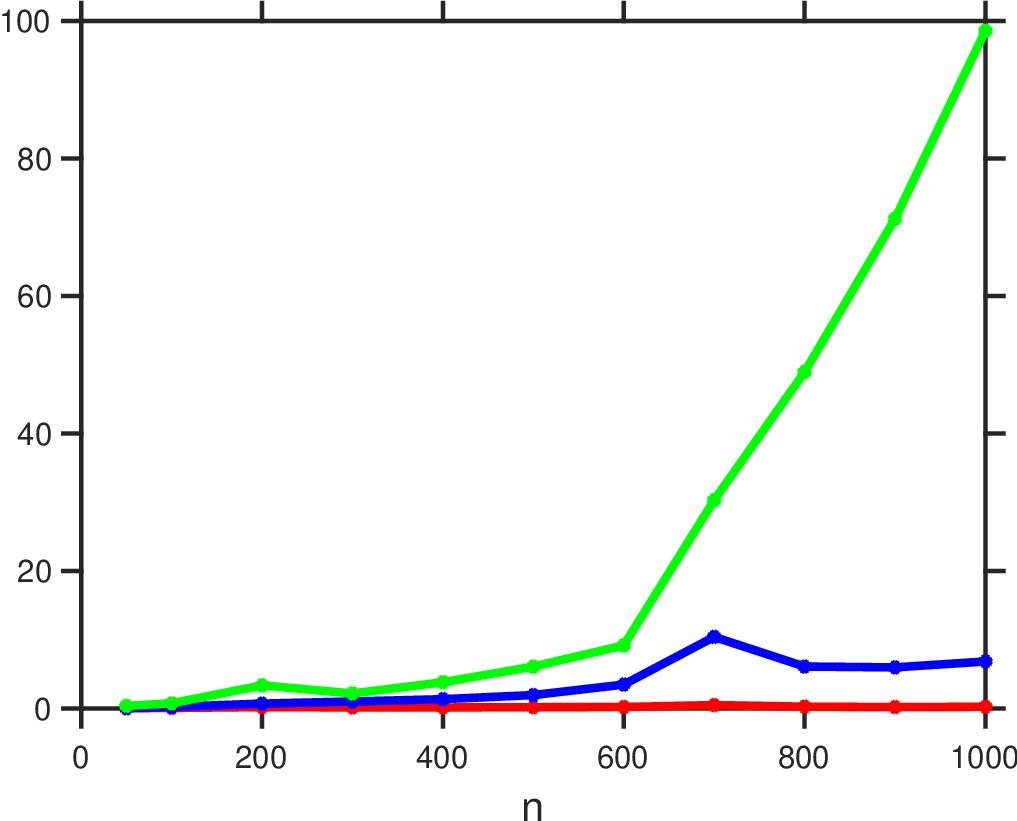}}\hspace{1mm}
    \subfloat[Integer]{\includegraphics[scale=0.22]{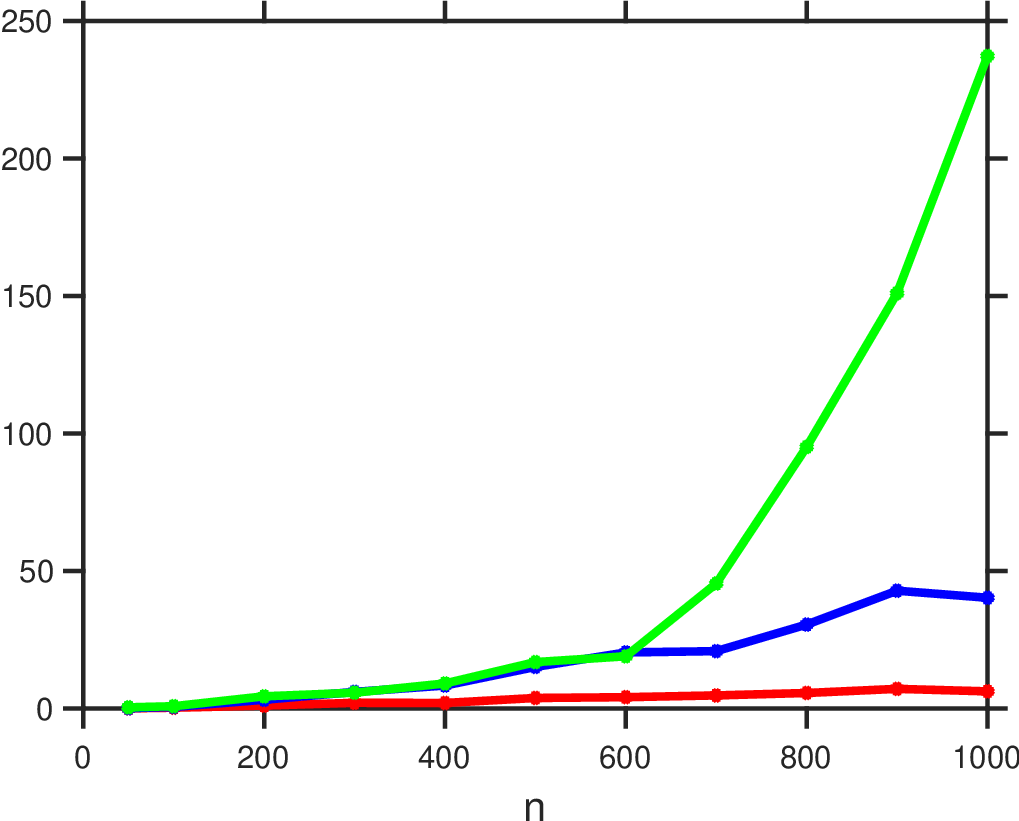}}
    \caption{CPU Time Comparison under Different Generative Models, $\epsilon=0.01$}
    \label{fig:low_precision}
\end{figure}
\begin{table}[htbp]
\centering
\label{tb:ratio}
\begin{tabular}{|c|cc|ccc|}
\hline \multirow{2}{*}{ Generative Model } & \multicolumn{2}{c}{ Data Structure } & \multicolumn{3}{|c|}{ Average CPU time (s) } \\
& Agents(n) & Chores(m) & GFW & Ours(DCA) & Ours(SGR)\\
\hline Uniform & 500 & 50 & 2.01 & 0.38 & \textbf{0.05} \\
Uniform & 600 & 50 & 2.75 & 0.34 & \textbf{0.05} \\
 Uniform & 700 & 50 & 3.41 & 0.37 & \textbf{0.07} \\
 Uniform & 800 & 50 & 3.08 & 0.51 & \textbf{0.09} \\
 Uniform & 900 & 50 & 4.60 & 0.69 & \textbf{0.09} \\
 Uniform & 1000 & 50 & 5.63 & 0.82 & \textbf{0.11} \\
\hline
\end{tabular}
\caption{CPU Time Comparison under High Agent-Chore Ratio, $\epsilon=0.01$}
\end{table}


\section{Conclusion}\label{sec:conclusion}
In this paper, we developed a novel unconstrained DC formulation for computing CE for chores and proposed two algorithms based on the new formulation: DCA and SGR. Our algorithms address the limitations of existing methods. Specifically, the DCA achieves R-linear convergence to an exact CE, and the SGR for finding an approximate CE, which is subproblem-free, has a polynomial iteration complexity. The main ingredients for achieving these results are the K{\L} property (with exponent $1/2$) and the smoothing strategy. They closely rely on the structure of the DC formulation. We believe that these techniques may find their applications in other market equilibrium problems, yielding strong convergence results for lightweight methods.

\begin{credits}
\subsubsection{\ackname} Anthony Man-Cho So was supported in part by the Hong Kong Research Grants Council (RGC) General Research Fund (GRF) project CUHK 14204823.
\end{credits}

\bibliographystyle{splncs04}
\bibliography{ref}
\newpage
\appendix
The appendix is structured as follows. 
In Appendix \ref{appen:kl}, we prove the error bound condition and K{\L} exponent property with exponent $1/2$ of the new formulation \eqref{eq:DC}.  Appendix \ref{appen:dca} and \ref{appen:sgr} provide missing proofs for our DCA and SGR, respectively.  In Appendix \ref{appen:round_dca}, we present a rounded DCA that possesses a non-asymptotic rate for finding an approximate CE. We provide experiment details in Appendix \ref{appen:additional_exp}.
 
\section{Proofs of Error Bound and K{\L} exponent}\label{appen:kl}
\subsection{Proof of Proposition \ref{pro:EB}}
    Let us first recall the notation 
    \[h_i(\mu)=B_i\max_{j\in[m]}\{\mu_j-\log(d_{ij})\}; \quad \ell(\mu)=\sum_{i\in[n]}B_i\log\left(\sum_{j\in[m]}e^{\mu_j}\right);\quad J_i(\mu)=\{j:\mu_j-\log(d_{ij})=h_i(\mu)\}.\]
    Given the base point $\bar{\mu}$, we classify the points $\mu\in\R^m$ into two types. We say that:
\begin{enumerate}
    \item $\mu$ is a type I point if for all $v_i\in \partial h_i(\mu)$, it holds that 
\begin{equation}\label{eq:typeI}
\sum_{i\in[n]}v_i-\nabla \ell(\bar{\mu})\neq0;\tag{Type I}
\end{equation}
\item  $\mu$ is a type II point if there exist $\hat{v}_i\in \partial h_i(\mu) $ such that
\begin{equation}\label{eq:typeII}
\sum_{i\in[n]}\hat{v}_i-\nabla \ell(\bar{\mu})=0.\tag{Type II}
\end{equation}
\end{enumerate}
 We prove that type I and II points satisfy the local error bound, respectively.
\paragraph{Type I:} For a type I point $\mu$, let $\cJ(\mu)$ denote the Cartesian product of $J_i(\mu)$, i.e., \[\cJ(\mu)=J_1(\mu)\times J_2(\mu)\times\cdots \times J_n(\mu).\]
Using the expression of $\partial h_i(\mu)$, we see 
\[\partial h_i(\mu)=\left\{v_i:\sum_{j\in[m]}v_{ij}=B_i,\ v_{ij}\geq0,\ v_{ij}=0\text{ if }j\notin J_i(\mu)\right\},\]which is a polyhedron. 
Then, the set $V_{\cJ(\mu)}\coloneqq\sum_{i\in[n]}\partial h_i(\mu)$
is also a polyhedron by \cite[Corollary 19.3.2]{rockafellar2015convex}, which only depends on $\cJ(\mu)$. Now, \eqref{eq:typeI} is equivalent to
\[\nabla\ell(\bar{\mu})\notin V_{\cJ(\mu)},\text{ i.e., }\dist\left(\nabla\ell(\bar{\mu}),V_{\cJ(\mu)}\right)>0.\]
Since $\cJ(\mu)\subseteq \Xi\coloneqq[m]\times\cdots\times [m]$ belongs to a finite discrete set, we know 
\[\delta_1\coloneqq \min\limits_{\cJ(\mu)\subseteq \Xi}\left\{\dist\left(\nabla\ell(\bar{\mu}),V_{\cJ(\mu)}\right):\dist\left(\nabla\ell(\bar{\mu}),V_{\cJ(\mu)}\right)>0\right\}>0.\]
We are ready for the local error bound for type I points. For all type I points $\mu$ with $\|\mu-\bar{\mu}\|\leq\delta_1/{(2\sum_{i\in[n]}B_i)}$, we have the following estimates on $\dist(0,\partial F(\mu))$:
\begin{equation}\label{eq:dist_F1}
\begin{aligned}
    \dist\left(0,\partial F(\mu)\right)&=\dist\left(0,\sum\limits_{i\in[n]}\partial h_i(\mu)-\nabla\ell(\mu)\right)\\
    &=\dist\left(0,\sum\limits_{i\in[n]}\partial h_i(\mu)-\nabla\ell(\bar{\mu})+\nabla\ell(\bar{\mu})-\nabla\ell(\mu)\right)\\
    &\geq \dist\left(0,\sum\limits_{i\in[n]}\partial h_i(\mu)-\nabla\ell(\bar{\mu})\right)-\|\nabla\ell(\bar{\mu})-\nabla\ell(\mu)\|\\
    &\geq\dist\left(\nabla\ell(\bar{\mu}),V_{\cJ(\mu)}\right)-\sum_{i\in[n]}B_i\|\mu-\bar{\mu}\|\\
    & \geq \delta_1-\sum_{i\in[n]}B_i\frac{\delta_1}{2\sum_{i\in[n]}B_i}=\frac{\delta_1}{2},
\end{aligned}
\end{equation}
where the first inequality uses the triangle inequality, and the second one is due to the $\sum_{i\in[n]}B_i$-Lipschitz continuity of $\nabla\ell$ and definition of $V_{\cJ(\mu)}$. On the other hand, for type I points $\mu \text{ with }\|\mu-\bar{\mu}\|\leq \frac{\delta_1}{2\sum_{i\in[n]}B_i}$, it holds that
\[\frac{\delta_1}{2}\geq \sum_{i\in[n]}B_i\|\mu-\bar{\mu}\|\geq \sum_{i\in[n]}B_i\dist(\mu,\cU^*).  \]
This, together with \eqref{eq:dist_F1}, implies that
\begin{equation}\label{eq:typeIerror}
   \dist(\mu,\cU^*)\leq \frac1{\sum_{i\in[n]}B_i}  \dist\left(0,\partial F(\mu)\right) \text{ for type I points }\mu \text{ with }\|\mu-\bar{\mu}\|\leq \frac{\delta_1}{2\sum_{i\in[n]}B_i}.
\end{equation}
Inequality \eqref{eq:typeIerror} is the desired local error bound for type I points with $\tau_1\coloneqq 1/\sum_{i\in[n]}B_i$.
\paragraph{Type II:} The proof of local error bound for type II points relies on a careful analysis of the maximum index sets $J_i(\mu)$. Specifically, we define classes for $J_i(\mu)$, $i\in[n]$, depending on whether they intersect (directly or indirectly), and then union $J_i(\mu)$ in the same class to get $\tilde{J}_l(\mu)$, $l\in[s]$. Then, the new index sets $\tilde{J}_l(\mu)$, $l\in[s]$ is a partition for $[m]$, and $\mu_j-\bar{\mu}_j$ equal for $j\in \tilde{J}_l(\mu)$ due to the following fact. This facilitates a decomposition of $\partial F(\mu)$, which is key to error bound for type II points.
    \begin{Fact}\label{fact:J}
        There exists $\delta_2>0$ such that for all $\mu$ with $\|\mu-\bar{\mu}\|\leq \delta_2$, it holds that $J_i(\mu)\subseteq J_i(\bar{\mu})$. Furthermore, for each $i\in[n]$, $\mu_{j}-\bar{\mu}_{j}$ equal for all $j\in J_i(\mu)$.
    \end{Fact}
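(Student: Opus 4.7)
The plan is to exploit the fact that each $h_i(\mu) = B_i \max_j\{\mu_j - \log(d_{ij})\}$ is a pointwise maximum of finitely many affine functions, so a continuity/gap argument suffices. First, for each $i \in [n]$ I would define the strict gap between the maximizing values and the others at $\bar\mu$ by
\[
\gamma_i \coloneqq \min_{j \notin J_i(\bar\mu)} \left( \frac{h_i(\bar\mu)}{B_i} - \bar\mu_j + \log(d_{ij}) \right) > 0,
\]
with the convention $\gamma_i = +\infty$ when $J_i(\bar\mu) = [m]$. Then set $\gamma \coloneqq \min_{i \in [n]} \gamma_i > 0$ and take $\delta_2 \coloneqq \gamma/4$.

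For any $\mu$ with $\|\mu - \bar\mu\| \le \delta_2$, I have the coordinatewise bound $|\mu_j - \bar\mu_j| \le \delta_2$ for every $j$. Then for indices $j' \in J_i(\bar\mu)$ and $j \notin J_i(\bar\mu)$, a direct estimate gives
\[
(\mu_{j'} - \log(d_{ij'})) - (\mu_j - \log(d_{ij})) \ge \gamma - 2\delta_2 > 0,
\]
so the maximum defining $h_i(\mu)$ cannot be attained at any $j \notin J_i(\bar\mu)$. This establishes the inclusion $J_i(\mu) \subseteq J_i(\bar\mu)$.

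For the second assertion, fix $i$ and pick arbitrary $j, j' \in J_i(\mu)$. By the first part, both belong to $J_i(\bar\mu)$, hence by definition of $J_i(\cdot)$ one has $\bar\mu_j - \log(d_{ij}) = \bar\mu_{j'} - \log(d_{ij'})$ and $\mu_j - \log(d_{ij}) = \mu_{j'} - \log(d_{ij'})$. Subtracting the two equalities makes the $\log(d_{ij})$ terms cancel and yields $\mu_j - \bar\mu_j = \mu_{j'} - \bar\mu_{j'}$, as required.

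The argument is essentially a standard stability-of-active-sets statement for piecewise linear maxima, so I do not expect a genuine obstacle; the only subtlety is selecting $\delta_2$ uniformly across all $i$, which is handled by taking the minimum $\gamma$ over the finite index set $[n]$.
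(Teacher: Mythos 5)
Your proposal is correct and follows essentially the same route as the paper: both define $\delta_2$ as a fixed fraction of the minimal gap at $\bar{\mu}$ between the active values $\bar{\mu}_j-\log(d_{ij})$, $j\in J_i(\bar{\mu})$, and the inactive ones, use a perturbation bound (your coordinatewise estimate versus the paper's $1$-Lipschitz argument for the max) to get $J_i(\mu)\subseteq J_i(\bar{\mu})$, and then obtain the second claim by subtracting the two activity identities so the $\log(d_{ij})$ terms cancel. The only cosmetic point is the degenerate case $J_i(\bar{\mu})=[m]$ for all $i$ (where your $\gamma=+\infty$), which is trivially fine but would be cleaner with, say, $\delta_2=\min\{\gamma/4,1\}$.
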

    \begin{lemma}\label{le:mu}
       Given a type II point $\mu$ with $\|\mu-\bar{\mu}\|\leq \delta_2$, there are partitions $[n]=\bigcup_{l\in[s]}I_l(\mu)$ and $[m]=\bigcup_{l\in[s]}\tilde{J}_l(\mu)$ that are determined by $\cJ(\mu)$ such that:
       \begin{enumerate}[{\rm (i)}]
       \item $\tilde{J}_l(\mu)=\bigcup_{i\in I_l}J_i(\mu)$;
           \item  For each $l\in[s]$, $\mu_j-\bar{\mu}_j$ equal for $j\in \tilde{J}_l(\mu)$;
           \item $I_l(\mu)\bigcap I_{l^{\prime}}(\mu)=\emptyset$ and $\tilde{J}_l(\mu)\bigcap \tilde{J}_{l^{\prime}}(\mu)=\emptyset$ if $l\neq l^{\prime}$.
       \end{enumerate}
    \end{lemma}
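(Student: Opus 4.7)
The plan is to build the partitions $\{I_l(\mu)\}_{l\in[s]}$ and $\{\tilde{J}_l(\mu)\}_{l\in[s]}$ by taking connected components of a graph on $[n]$ whose edges are determined by the index sets $J_i(\mu)$, and then to propagate the equality of $\mu_j-\bar{\mu}_j$ along chains of intersecting $J_i(\mu)$ using Fact \ref{fact:J}.

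Concretely, I would first define an equivalence relation on $[n]$ by declaring $i\sim i'$ whenever there exist a chain $i=i_0,i_1,\ldots,i_k=i'$ with $J_{i_t}(\mu)\cap J_{i_{t+1}}(\mu)\neq\emptyset$ for each $t$. The equivalence classes form a partition $[n]=\bigcup_{l\in[s]}I_l(\mu)$, which is manifestly determined by $\cJ(\mu)$. Set $\tilde{J}_l(\mu)\coloneqq\bigcup_{i\in I_l(\mu)}J_i(\mu)$, so (i) holds by construction.

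For the disjointness claims in (iii), the partition property of $\{I_l(\mu)\}$ is immediate from being equivalence classes. For $\{\tilde{J}_l(\mu)\}$, suppose for contradiction that $j\in\tilde{J}_l(\mu)\cap\tilde{J}_{l'}(\mu)$ with $l\neq l'$. Then there exist $i\in I_l(\mu)$ and $i'\in I_{l'}(\mu)$ with $j\in J_i(\mu)\cap J_{i'}(\mu)$, forcing $i\sim i'$, a contradiction. To see that $\{\tilde{J}_l(\mu)\}$ covers $[m]$, I would use the type II condition: choose $\hat{v}_i\in\partial h_i(\mu)$ with $\sum_{i\in[n]}\hat{v}_i=\nabla\ell(\bar{\mu})$. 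Since each component of $\nabla\ell(\bar{\mu})$ is strictly positive and $\hat{v}_i$ is supported on $J_i(\mu)$, for every $j\in[m]$ there exists some $i$ with $\hat{v}_{ij}>0$, hence $j\in J_i(\mu)\subseteq\tilde{J}_{l(i)}(\mu)$.

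The main obstacle is (ii), and here I would argue as follows. Fix $l\in[s]$ and two indices $j,j'\in\tilde{J}_l(\mu)$. By the construction of $I_l(\mu)$, we can find agents $i_0,i_1,\ldots,i_k\in I_l(\mu)$ and indices $j=j_0,j_1,\ldots,j_k=j'$ such that $j_{t-1},j_t\in J_{i_{t-1}}(\mu)$ with $j_t\in J_{i_{t-1}}(\mu)\cap J_{i_t}(\mu)$ for each $t$. Now Fact \ref{fact:J} (applied under the assumption $\|\mu-\bar{\mu}\|\leq\delta_2$) gives that $\mu_{j'}-\bar{\mu}_{j'}$ is constant over $j'\in J_i(\mu)$ for each agent $i$; in particular
\[
\mu_{j_{t-1}}-\bar{\mu}_{j_{t-1}}=\mu_{j_t}-\bar{\mu}_{j_t}
\]
for each $t\in[k]$, and telescoping along the chain yields $\mu_j-\bar{\mu}_j=\mu_{j'}-\bar{\mu}_{j'}$. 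This establishes (ii) and completes the lemma. The subtlety to be careful about is making sure the chain is valid, i.e., that consecutive sets $J_{i_{t-1}}(\mu)$ and $J_{i_t}(\mu)$ share a common index, which is exactly what the equivalence relation guarantees; and that the equality of $\mu_j-\bar{\mu}_j$ holds inside a single $J_i(\mu)$, which is the nontrivial content of Fact \ref{fact:J}.
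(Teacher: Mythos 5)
Your proposal is correct and follows essentially the same route as the paper's proof: both define the classes via chains of intersecting sets $J_i(\mu)$ (you phrase the equivalence relation on agents, the paper on the sets themselves, which is the same thing), both use the type II condition with $\nabla_j\ell(\bar{\mu})>0$ to get that the $\tilde{J}_l(\mu)$ cover $[m]$, and both obtain (ii) by propagating the constancy of $\mu_j-\bar{\mu}_j$ on each $J_i(\mu)$ (Fact \ref{fact:J}) along the chain.
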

    \begin{proof}[Proof of Fact \ref{fact:J}]
        Let us set 
    \begin{equation}\label{eq:delta}
        \delta_2\coloneqq\frac13\min_{i\in[n]}\left\{ h_i(\bar{\mu})-\max_{j\in[m]\setminus J_i(\bar{\mu})}\left\{\mu_j-\log(d_{ij})\right\}\right\}.
    \end{equation}
    Then $\delta_2>0$ and for all $\|\mu-\mu^*\|\leq\delta_2$, it holds that
    \begin{equation}\label{eq:index}
        \begin{aligned}
  \max_{j\in J_i(\bar{\mu})}\left\{\mu_j-\log(d_{ij})\right\}&\geq \ \  \max_{j\in J_i(\bar{\mu})}\left\{\bar{\mu}_j-\log(d_{ij})\right\}-\|\mu-\bar{\mu}\|\\
  &\geq\quad h_i(\bar{\mu})-\delta_2\\
  &\geq \max_{j\in[m]\setminus J_i(\bar{\mu})}\left\{\bar{\mu}_j-\log(d_{ij})\right\}+3\delta_2-\delta_2\\
  & \geq \max_{j\in[m]\setminus J_i(\bar{\mu})}\left\{{\mu}_j-\log(d_{ij})\right\}-\|\mu-\bar{\mu}_j\|+2\delta_2\\
  & = \max_{j\in[m]\setminus J_i(\bar{\mu})}\left\{{\mu}_j-\log(d_{ij})\right\}+\delta_2,\\
    \end{aligned}
\end{equation}
where the first and fourth inequalities use the $1$-Lipschitz continuity of function $\mu\mapsto \max_{j\in J}\{\mu_j-\log(d_{ij})\}$ (with $J\subseteq[m]$ being an index set), and the third inequality is due to the definition \eqref{eq:delta}.
Using \eqref{eq:index}, together with the definition of $J_i(\mu)$ and $\delta_2>0$, we see that $J_i(\mu)\subseteq J_i(\bar{\mu})$. 

Finally, we show that $\mu_{j}-\bar{\mu}_{j}$ equal for all $j\in J_i(\mu)$. It suffices to notice that $J_i(\mu)\subseteq J_i(\bar{\mu})$ ensures $h_i(\bar{\mu})=\bar{\mu}_j-\log(d_{ij})$ for $j\in J_i({\mu})$, and thus
\[h_i(\mu)-h_i(\bar{\mu})=\left({\mu}_j-\log(d_{ij})\right)-\left(\bar{\mu}_j-\log(d_{ij})\right)=\mu_j-\bar{\mu}_j\text{ for }j\in J_i(\mu).\]
    \end{proof}
    \begin{proof}[Proof of Lemma \ref{le:mu}] The starting point is to partition $[m]$. Note that $\mu$ is a type II point, i.e., $\eqref{eq:typeII}$ holds, and $\nabla_{j}\ell(\bar{\mu})>0$ for each $j\in[m]$ by direct computation. We see that for each  $j\in[m]$, there is an index $i\in[n]$ such that $\hat{v}_{ij}>0$, and further $j\in J_i(\mu)$ due to $\hat{v}_i\in\partial h_i(\mu)$ and $h_i(\mu)=\conv\{B_ie_j:j\in J_i(\mu)\}$. It follows that $[m]\subseteq \bigcup_{i\in[n]}J_{i}(\mu)$. Since $J_{i}(\mu)\subseteq[m]$, we have $[m]=\bigcup_{i\in[n]}J_{i}(\mu)$.
    
    Then, to partition $[m]$, it suffices to partition $\{J_i(\mu):i\in[n]\}$ into different equivalence classes. Specifically, we write $J_{i}(\mu)\sim J_{i^{\prime}}(\mu)$ if $J_{i}(\mu)\bigcap J_{i^{\prime}}(\mu)\neq \emptyset$ or  there exist $i_1,i_2,\ldots,i_l\in[m]$ such that
    \[J_{i}(\mu)\bigcap J_{i_1}(\mu)\neq\emptyset,\ J_{i_1}(\mu)\bigcap J_{i_2}(\mu)\neq\emptyset,\ldots, J_{i_{l-1}}(\mu)\bigcap J_{i_l}(\mu)\neq\emptyset,\ J_{i_l}(\mu)\bigcap J_{i^{\prime}}(\mu)\neq\emptyset. \]
    This definition ensures: $J_{i}(\mu)\sim J_{i}(\mu)$; if $J_{i}(\mu)\sim J_{i^{\prime}}(\mu)$, then $ J_{i^{\prime}}(\mu)\sim J_{i}(\mu)$; if $J_{i_1}(\mu)\sim J_{i_2}(\mu)$ and $J_{i_2}(\mu)\sim J_{i_3}(\mu)$, then $J_{i_1}(\mu)\sim J_{i_3}(\mu)$. So the relation "$\sim$" defines equivalence classes for $\{J_i(\mu):i\in[n]\}$. 
    
    Assume without loss of generality (WLOG for short) that there are $s$ equivalence classes and let $I_l(\mu),\ l\in[s]$ contain the indices $i$ of each equivalence class. It follows from the definition of these equivalence classes that
    \begin{equation}\label{eq:Ji}
       I_l(\mu)\bigcap I_{l^{\prime}}(\mu)=\emptyset\text{ and } J_i(\mu)\bigcap J_{i^{\prime}}(\mu)=\emptyset \text{ if }i\in I_{l},\ i^{\prime}\in I_{l^{\prime}}\text{ with }l\neq l^{\prime}.
    \end{equation}
    Define $\tilde{J}_l(\mu)\coloneqq \bigcup_{i\in I_l}J_i(\mu)$. Then, Item (i) automatically holds and \eqref{eq:Ji} implies Item (iii).
    
    To prove Item (ii), we recall Fact \ref{fact:J} that ensures that for each $i\in[n]$, $\mu_j-\bar{\mu}_j$ equal for $j\in J_i(\mu)$. We see that if $J_i(\mu)\bigcap J_{i^{\prime}}(\mu)\neq \emptyset$, then $\mu_j-\bar{\mu}_j$ equal for $j\in J_i(\mu)\bigcup J_{i^{\prime}}(\mu)$. This, together with the definition of our equivalence classes, implies that for each $l\in[s]$,
        \[\mu_j-\bar{\mu}_j\text{ equal for all }j\in\bigcup\limits_{i\in I_l}J_i(\mu)=\tilde{J}_l(\mu). \]
        This completes the proof of Lemma \ref{le:mu}.
    \end{proof}
    The left proof for the local error bound of type II points consists of two parts.
    
\textbf{(i) Decomposition-based estimation for} $\dist(0,\partial F(\mu))$: Let us consider following equivalent form of $\dist^2(0,\partial F(\mu))$:
\begin{equation}\label{eq:distF2}
    \begin{aligned}
    \dist^2\left(0,\partial F(\mu)\right)&=\dist^2\left(0,\sum\limits_{i\in[n]}\partial h_i(\mu)-\nabla\ell(\mu)\right)\\
    &=\dist^2\left(0,\sum\limits_{i\in[n]}\partial h_i(\mu)-\nabla\ell(\bar{\mu})+\nabla\ell(\bar{\mu})-\nabla\ell(\mu)\right)\\
    &=\dist^2\left(\nabla\ell({\mu})-\nabla\ell(\bar{\mu}),\sum\limits_{i\in[n]}\partial h_i(\mu)-\nabla\ell(\bar{\mu})\right)\\
    &=\dist^2\left(\nabla\ell({\mu})-\nabla\ell(\bar{\mu}),\sum\limits_{l\in[s]}\sum\limits_{i\in I_l(\mu)}\left(\conv\{B_ie_j:j\in J_i(\mu)\}-\hat{v}_i \right)\right)\\
\end{aligned}
\end{equation}
where the last equality uses \eqref{eq:typeII} and the partition $[n]=\bigcup_{l\in[s]}I_l(\mu)$ given by Lemma \ref{le:mu}. Recall that $\hat{v}_i\in \partial h_i(\mu)=\conv\{B_ie_j:j\in J_i(\mu)\}$. We see that for $i\in I_l(\mu)$, \[\begin{aligned}
    \conv\{B_ie_j:j\in J_i(\mu)\}-\hat{v}_i &\subseteq \left\{\lambda\in\R^m:\sum_{j\in J_i(\mu)}\lambda_j=0,\ \lambda_j=0\text{ for }j\notin J_i(\mu)\right\}\\
    &\subseteq \cC_l(\mu)\coloneqq\left\{\lambda\in\R^m:\sum_{j\in \tilde{J}_l(\mu)}\lambda_j=0,\ \lambda_j=0\text{ for }j\notin \tilde{J}_l(\mu)\right\}\\
\end{aligned}.\]
Observe that by definition of $\cC_l(\mu)$, the sum of subsets of $\cC_l(\mu)$ is still its subset. It follows that
\begin{equation}\label{eq:cC}
    \sum\limits_{i\in I_l(\mu)}\left(\conv\{B_ie_j:j\in J_i(\mu)\}-\hat{v}_i \right)\subseteq \cC_l(\mu).
\end{equation}
Combining \eqref{eq:distF2} and \eqref{eq:cC}, we have
\begin{equation}\label{eq:distF3}
    \dist^2\left(0,\partial F(\mu)\right)\geq \dist^2\left(\nabla\ell({\mu})-\nabla\ell(\bar{\mu}),\sum\limits_{l\in[s]}\cC_l(\mu)\right). 
\end{equation}
We write 
\[u_l(\mu)\coloneqq \Pi_{{\rm span}\{e_j:j\in \tilde{J}_l(\mu)\}}\left(\nabla\ell({\mu})-\nabla\ell(\bar{\mu})\right),\]
and have the decomposition $\nabla\ell({\mu})-\nabla\ell(\bar{\mu})=\sum_{l\in[s]}u_l(\mu)$ due to the partition $[m]=\bigcup_{l\in[s]}\tilde{J}_l(\mu)$, where $\tilde{J}_l(\mu)\cap \tilde{J}_{l^{\prime}}(\mu)=\emptyset$ if $l\neq l^{\prime}$. Then, the right-hand side of \eqref{eq:distF3} can be further transformed:
\begin{equation}\label{eq:distF4}
    \begin{aligned}
        \dist^2\left(\nabla\ell({\mu})-\nabla\ell(\bar{\mu}),\sum\limits_{l\in[s]}\cC_l(\mu)\right)&=\min\limits_{\lambda_l\in\cC_l(\mu),l\in[s]}\quad \left\|\sum\limits_{l\in[s]}\left(u_l(\mu)-\lambda_l\right) \right\|^2\\
        &=\sum\limits_{l\in[s]}\min\limits_{\lambda_l\in\cC_l(\mu)}\quad\left\|  u_l(\mu)-\lambda_l\right\|^2\\
        &=\sum\limits_{l\in[s]}\dist^2\left(u_l(\mu),\cC_l(\mu) \right).
    \end{aligned}
\end{equation}
Here, the second step is due to the orthogonality of $\{u_l(\mu)-\lambda_l:l\in[s]\}$. This can be seen from the facts that $u_l(\mu)\in{\rm span}\{e_j:j\in \tilde{J}_l(\mu)\}$, $\lambda_l\in\cC_l(\mu)\subseteq {\rm span}\{e_j:j\in \tilde{J}_l(\mu)\}$, and ${\rm span}\{e_j:j\in \tilde{J}_l(\mu)\},\ l\in[s]$ are orthogonal subspaces of $\R^m$ due to Lemma \ref{le:mu} (iii).
We define
\[\cD_{\left|\tilde{J}_l(\mu)\right|}\coloneqq \left\{\lambda\in\R^{\left|\tilde{J}_l(\mu)\right|}:\sum\limits_{j\in\left[\left|\tilde{J}_l(\mu)\right|\right]}\lambda_j=0\right\}.\]
By the definitions of $u_l(\mu)$ and $\cC_l(\mu)$, we have
\begin{equation}\label{eq:dist_ell}
    \begin{aligned}
        \dist^2\left(u_l(\mu),\cC_l(\mu) \right)= \dist^2\left(\nabla_{\tilde{J}_l(\mu)}\ell({\mu})-\nabla_{\tilde{J}_l(\mu)}\ell(\bar{\mu}),\cD_{\left|\tilde{J}_l(\mu)\right|}\right).
    \end{aligned}
\end{equation}
Combining \eqref{eq:distF3}, \eqref{eq:distF4}, and \eqref{eq:dist_ell}, we have
\begin{equation}\label{eq:trans}
     \dist^2\left(0,\partial F(\mu)\right)\geq\sum\limits_{l\in[s]}\dist^2\left(\nabla_{\tilde{J}_l(\mu)}\ell({\mu})-\nabla_{\tilde{J}_l(\mu)}\ell(\bar{\mu}),\cD_{\left|\tilde{J}_l(\mu)\right|}\right).
\end{equation}
\textbf{(ii) Estimation on subspace:}
Let us then estimate the right-hand side of \eqref{eq:dist_ell}. Construct the following stationary point $\mu^*$ associated with $\mu$ that satisfies $\sum_{j\in[m]} e^{\mu^*_j}= \sum_{j\in[m]} e^{\mu_j}$:
\begin{equation}\label{eq:mu*}
    \mu^*\coloneqq \bar{\mu}+\log\left({\sum_{j\in[m]}e^{\mu_j} }\right)\1-\log\left({\sum_{j\in[m]}e^{\bar{\mu}_j} }\right)\1.
\end{equation}
Here , the constructed $\mu^*$ is stationary for \eqref{eq:DC} due to Fact \ref{fact:homo}. Furthermore, we see that $\nabla \ell(\mu^*)=\nabla\ell(\bar{\mu})$ by direct computation and $\mu_j-\mu^*_j$ equal for $j\in\tilde{J}_l(\mu)$ by  Lemma \ref{le:mu} (ii). Let $a_l(\mu)\coloneqq \mu_j-\mu^*_j$ for $j\in\tilde{J}_l(\mu)$. Using $\sum_{j\in[m]} e^{\mu^*_j}= \sum_{j\in[m]} e^{\mu_j}$, we have
\begin{equation}\label{eq:sub1}
    \begin{aligned}
        \dist\left(\nabla_{\tilde{J}_l(\mu)}\ell({\mu})-\nabla_{\tilde{J}_l(\mu)}\ell(\bar{\mu}),\cD_{\left|\tilde{J}_l(\mu)\right|}\right)&=\dist\left(\nabla_{\tilde{J}_l(\mu)}\ell({\mu})-\nabla_{\tilde{J}_l(\mu)}\ell({\mu}^*),\cD_{\left|\tilde{J}_l(\mu)\right|}\right)\\
        &=\dist\left(\frac{\sum_{i\in[n]}B_i\left(e^{\mu_j}-e^{\mu^*_j}\right)_{\tilde{J}_l(\mu)} }{\sum_{j\in[m]} e^{\mu_j}},\cD_{\left|\tilde{J}_l(\mu)\right|}\right)\\
        &=\frac{\sum_{i\in[n]}B_i\left(e^{\mu_j}-e^{\mu^*_j}\right)^{\top}_{\tilde{J}_l(\mu)} }{\sum_{j\in[m]} e^{\mu_j}} \frac{\1}{\sqrt{\left|\tilde{J}_l(\mu)\right|}}\\
        &=\sum_{i\in[n]}B_i\frac{\left|1-e^{-a_l(\mu)}\right|\sum\limits_{j\in\tilde{J}_l(\mu)}e^{\mu_j}}{\sqrt{\left|\tilde{J}_l(\mu)\right|}\sum\limits_{j\in[m]} e^{\mu_j}}.
    \end{aligned}
\end{equation}
We then estimate the terms on the right-hand side of \eqref{eq:sub1}.
For $\mu$ with $\|\mu-\bar{\mu}\|\leq\delta_2$, 
\[\sum\limits_{j\in[m]} e^{\mu_j}\leq \sum\limits_{j\in[m]} e^{\bar{\mu}_j+\delta_2}\leq me^{\max_{j}\bar{\mu}_j+\delta_2}\text{; }\sum\limits_{j\in\tilde{J}_l(\mu)}e^{\mu_j}\geq \sum\limits_{j\in\tilde{J}_l(\mu)}e^{\bar{\mu}_j-\delta_2}\geq \left|\tilde{J}_l(\mu)\right|e^{\min_{j}\bar{\mu}_j-\delta_2}, \]
and for $l\in[s],\ j\in \tilde{J}_l(\mu)$,
\[|a_l(\mu)|=|\mu_j-\bar{\mu}_j+\bar{\mu}_j-\mu^*_j|\leq|\mu_j-\bar{\mu}_j|+|\bar{\mu}_j-\mu^*_j|\leq \delta_2+\left|\log\left(\frac{\sum_{j\in[m]} e^{\bar{\mu}_j}}{\sum_{j\in[m]} e^{{\mu}_j}}\right) \right|\leq 2\delta_2.\]
The latter further implies that for $l\in[s],\ j\in \tilde{J}_l(\mu)$,
\[|1-e^{-a_l(\mu)}|\geq e^{-|a_l(\mu)|}|a_l(\mu)|\geq e^{-2\delta_2}|\mu_j-\mu_j^*|= \frac{e^{-2\delta_2}}{\sqrt{\left|\tilde{J}_l(\mu)\right|}}\left\|\mu_{\tilde{J}_l(\mu)}-\mu^*_{\tilde{J}_l(\mu)}\right\|.\]
These estimates, together with \eqref{eq:sub1}, imply that
\begin{equation}\label{eq:sub2}
      \dist\left(\nabla_{\tilde{J}_l(\mu)}\ell({\mu})-\nabla_{\tilde{J}_l(\mu)}\ell(\bar{\mu}),\cD_{\left|\tilde{J}_l(\mu)\right|}\right)\geq  \frac{ e^{-4\delta_2}}{me^{\max_j\bar{\mu}_j-\min_{j}\bar{\mu}_j}}\left\|\mu_{\tilde{J}_l(\mu)}-\mu^*_{\tilde{J}_l(\mu)}\right\|.
\end{equation}
Let $\tau_2\coloneqq {(me^{\max_j\bar{\mu}_j-\min_{j}\bar{\mu}_j})}/e^{-4\delta_2}$. Then \eqref{eq:trans} and \eqref{eq:sub2} imply that for type II points $\mu$ with $\|\mu-\bar{\mu}\|\leq\delta_2$,
\begin{equation}\label{eq:dist_square}
    \dist^2\left(0,\partial F(\mu)\right)\geq\sum\limits_{l\in[s]}\frac{1}{\tau_2^2}\left\|\mu_{\tilde{J}_l(\mu)}-\mu^*_{\tilde{J}_l(\mu)}\right\|^2=\frac{1}{\tau_2^2}\left\|\mu-\mu^*\right\|^2.
\end{equation}
Since $\mu^*$ is stationary, we have $\|\mu-\mu^*\|\geq\dist(\mu,\cU^*)$, and then the error bound follows:
\begin{equation}\label{eq:typeIIerror}
    \dist\left(\mu,\cU^*\right)\leq\tau_2\dist\left(0,\partial F(\mu)\right).
\end{equation}
Finally, set $\delta\coloneqq\min\{\delta_1/(2\sum_{i\in[n]}B_i),\delta_2\}$ and $\tau\coloneqq\max\{\tau_1,\tau_2\}$. The desired error bound for all $\mu$ with $\|\mu-\bar{\mu}\|\leq\delta$ is ensured by inequalities \eqref{eq:typeIerror} and \eqref{eq:typeIIerror}.
\subsection{Proof of Theorem \ref{th:KL}}
It suffices to consider the K{\L} exponents of stationary points due to \cite[Lemma 2.1]{li2018calculus}. 
Similar to the proof of Proposition \ref{pro:EB}, we separate points $\mu$ into types I and II, and prove the K{\L} property with exponent $1/2$ respectively.

\paragraph{Type I:} We revisit \eqref{eq:dist_F1}, which says that there exists a scalar $\delta_1>0$ such that for all type I points $\mu$ with $\|\mu-\bar{\mu}\|\leq\delta_1/{(2\sum_{i\in[n]}B_i)}$, 
\begin{equation}\label{eq:distdelta1}
    \dist\left(0,\partial F(\mu)\right)\geq\frac{\delta_1}{2}. 
\end{equation}
Noticing the $2\sum_{i\in[n]}B_i$-Lipschitz continuity of $F$, for type I points $\mu$ with $\|\mu-\bar{\mu}\|\leq\delta_1/(2\sum_{i\in[n]}B_i)$, we have
\begin{equation}\label{eq:typeIKL}
    F(\mu)-F(\bar{\mu})\leq 2\sum_{i\in[n]}B_i\|\mu-\bar{\mu}\|\leq  \delta_1\leq \frac{4}{\delta_1} \dist^2\left(0,\partial F(\mu)\right),
\end{equation}
where the last inequality uses \eqref{eq:distdelta1}.
\paragraph{Type II:} 
By Fact \ref{fact:J}, there exists a scalar $\delta_2>0$ such that for all $\mu$ with $\|\mu-\bar{\mu}\|\leq \delta_2$, it holds that $J_i(\mu)\subseteq J_i(\bar{\mu})$. Adopt the definition \eqref{eq:mu*}. Then $J_i(\bar{\mu})=J_i(\mu^*)\supseteq J_i(\mu)$.
Subsequently, for a type II point $\mu$ and $j\in J_i(\mu)$, we have
\[h_i(\mu)-h_i({\mu^*})=B_i(\mu_j-\mu_j^{*})=(\mu-{\mu^*})^{\top}\hat{v}_i,\]
where the second equality is due to $\hat{v}_i\in\partial h_i(\mu)=\conv\{B_ie_j:j\in J_i(\mu)\}$ and that $\mu_j-\mu_j^{*}$ equal for $j\in J(\mu)$. It follows that
\begin{equation}\label{eq:nonsmooth}
    \sum\limits_{i\in[n]}h_i(\mu)-  \sum\limits_{i\in[n]}h_i({\mu^*})=  \sum\limits_{i\in[n]}(\mu-{\mu^*})^{\top}\hat{v}_i.
\end{equation}
On the other hand, since $\ell$ is a $\sum_{i\in[n]}B_i$-smooth function, it holds that
\begin{equation*}
    -\ell(\mu)+\ell\left(\mu^*\right)\leq -\nabla\ell(\mu^*)^{\top}(\mu-\mu^*)+\frac{\sum_{i\in[n]}B_i}{2}\|\mu-\mu^*\|^2.
\end{equation*}
This, together with \eqref{eq:dist_square}, yields that
\begin{equation}\label{eq:smooth}
     -\ell(\mu)+\ell\left(\mu^*\right)\leq -\nabla\ell(\mu^*)^{\top}(\mu-\mu^*)+\frac{\sum_{i\in[n]}B_i\tau_2^2}{2}\dist^2\left(0,\partial F(\mu)\right)\text{ if }\|\mu-\bar{\mu}\|\leq\delta_2.
\end{equation}
 Summing \eqref{eq:nonsmooth} and \eqref{eq:smooth}, we see that for $\mu$ with $\|\mu-\bar{\mu}\|\leq\delta_2$,
 \begin{equation}\label{eq:F_F*}
     F(\mu)-F(\mu^*)\leq \left(\sum\limits_{i\in[n]}\hat{v}_i-\nabla\ell(\mu^*)\right)^{\top}(\mu-\mu^*)+\frac{\sum_{i\in[n]}B_i\tau_2^2}{2}\dist^2\left(0,\partial F(\mu)\right).
 \end{equation}
 Recall that the definition of $\mu^*$, i.e., \eqref{eq:mu*}, implies that $F(\mu^*)=F(\bar{\mu})$ and $\nabla\ell(\mu^*)=\nabla\ell(\bar{\mu})$. These, together with \eqref{eq:F_F*}, imply that for all type II points $\mu$ with $\|\mu-\bar{\mu}\|\leq\delta_2$, it holds that $\sum_{i\in[n]}\hat{v}_i-\nabla\ell(\mu^*)=0$ and further
 \begin{equation}\label{eq:typeIIKL}
F(\mu)-F(\bar{\mu})\leq \frac{\sum_{i\in[n]}B_i\tau_2^2}{2}\dist^2\left(0,\partial F(\mu)\right).  
 \end{equation}
Set $\epsilon=\min\{\delta_1/(2\sum_{i\in[n]}B_i),\delta_2\}$, $\eta=\max\{4/\delta_1,\sum_{i\in[n]}B_i\tau_2^2/2\}$, and $\nu\in(0,\infty)$. Then, \eqref{eq:typeIKL} and \eqref{eq:typeIIKL} prove the K{\L} property  with exponent $1/2$ at $\bar{\mu}$ with the objects $\epsilon,\eta,$ and $\nu$.
\section{Missing Proofs for DCA}\label{appen:dca}
We first introduce a useful lower bound $\underline{\mu}$, which plays a crucial role in analyzing the DCA iterates, stationary points of \eqref{eq:DC}, and bounds on the function $F$. 
\begin{Fact}\label{fact:underlined}
    For $\mu\in\R^m$ satisfying $\sum_{i\in[n]}e^{\mu_j}\geq\sum_{i\in[n]}B_i$, if $\mu_j<\underline{\mu}$ for some $j\in[m]$, where $\underline{\mu}\coloneqq \log (\sum_{i\in[n]}B_i/m )-\max_{i,j}\log(d_{ij})+\min_{i,j}\log(d_{ij})$, then for $i\in[n]$, all vectors $v_i\in\partial h_i(\mu)$ satisfy $v_{ij}=0$. Furthermore, each vector $u\in\partial F(\mu)$ satisfies $u_j<0$.
\end{Fact}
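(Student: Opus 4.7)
The plan is to establish the two claims in sequence. I will first show that the hypothesis $\mu_j < \underline{\mu}$ excludes $j$ from every maximizing set $J_i(\mu)$, from which $v_{ij}=0$ follows immediately since $\partial h_i(\mu) = \conv\{B_i e_{j'} : j' \in J_i(\mu)\}$. I will then use this to evaluate the $j$-th component of an arbitrary subgradient of $F$.

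For the exclusion step, the key is a uniform lower bound on $\max_{j'}\{\mu_{j'}-\log(d_{ij'})\}$ that does not depend on $i$. The elementary bound $\sum_{j'}e^{\mu_{j'}} \leq m\cdot e^{\max_{j'}\mu_{j'}}$, combined with the standing hypothesis $\sum_{j'} e^{\mu_{j'}} \geq \sum_{i\in[n]} B_i$, yields $\max_{j'}\mu_{j'}\geq \log(\sum_{i\in[n]} B_i / m)$. Picking an index $j^{\star}$ attaining this maximum and using $\log(d_{ij^{\star}})\leq \max_{i,j}\log(d_{ij})$ gives, for every $i\in[n]$,
\[
\max_{j'\in[m]}\bigl\{\mu_{j'}-\log(d_{ij'})\bigr\} \;\geq\; \mu_{j^{\star}} - \log(d_{ij^{\star}}) \;\geq\; \log\!\left(\frac{\sum_{i\in[n]} B_i}{m}\right) - \max_{i,j}\log(d_{ij}).
\]
Since $j\in J_i(\mu)$ is equivalent to $\mu_j \geq \max_{j'}\{\mu_{j'}-\log(d_{ij'})\}+\log(d_{ij})$, and $\log(d_{ij})\geq \min_{i,j}\log(d_{ij})$, any such membership would force $\mu_j \geq \underline{\mu}$. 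Contrapositively, $\mu_j < \underline{\mu}$ precludes $j\in J_i(\mu)$ for every $i\in[n]$, so each $v_i\in\partial h_i(\mu)$ satisfies $v_{ij}=0$.

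For the second claim, any $u\in\partial F(\mu)$ decomposes as $u = \sum_{i\in[n]} v_i - \nabla\ell(\mu)$ with $v_i\in\partial h_i(\mu)$. The previous step gives $\sum_{i\in[n]} v_{ij}=0$, so
\[
u_j \;=\; -\nabla_j\ell(\mu) \;=\; -\left(\sum_{i\in[n]} B_i\right)\frac{e^{\mu_j}}{\sum_{j'\in[m]} e^{\mu_{j'}}} \;<\; 0,
\]
which is strictly negative because the fraction is positive and $B_i>0$. The only substantive point is the uniform max-lower-bound; the specific form of $\underline{\mu}$ is calibrated so that the worst case $\log(d_{ij}) = \min_{i,j}\log(d_{ij})$ in the membership criterion still yields the desired exclusion, so no additional case analysis is required.
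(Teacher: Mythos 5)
Your proof is correct and follows essentially the same route as the paper: bound $\max_{j'}\mu_{j'}\geq\log(\sum_{i\in[n]}B_i/m)$ from the hypothesis, deduce that $\mu_j<\underline{\mu}$ excludes $j$ from every $J_i(\mu)$ so that $v_{ij}=0$, and then conclude $u_j=-\nabla_j\ell(\mu)<0$ since $\nabla\ell>0$. Your write-up merely makes explicit the membership criterion for $J_i(\mu)$ that the paper's terser argument leaves implicit; no substantive difference.
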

\begin{proof}
  By $\sum_{j\in[m]}e^{\mu_j}\geq\sum_{i\in[n]}B_i$, we have $\max_j{\mu_j}\geq\log(\sum_{i\in[n]}B_i/m)$. It follows that: If $\mu_j-\min_{i,j}\log(d_{ij})<\log(\sum_{i\in[n]}B_i/m)-\max_{i,j}\log(d_{ij})$, then $j\notin J_i(\mu)$ for all $i\in[n]$. Consequently, for all $\mu$ with $\sum_{j\in[m]}e^{\mu_j}\geq\sum_{i\in[n]}B_i$, 
    \begin{equation}\label{eq:notinJ}
     j\notin J_i(\mu)\text{ for all }   i\in[n] \text{ if }\mu_j<\underline{\mu}.
    \end{equation}
    
    If $\mu_j<\underline{\mu}$, then by \eqref{eq:notinJ},  we have $v_i\in\partial h_i(\mu)=0$ satisfying $v_{ij}=0$ for all $i\in[n]$. Since $\nabla\ell>0$ always holds on $\R^m$ and $\partial F=\sum_{i\in[n]}\partial h_i-\nabla\ell$, we see that $u_j<0$ for all $u\in \partial F(\mu)$, which completes the proof.
\end{proof}
Fact \ref{fact:underlined} yields $\mu_j\geq\underline{\mu}$ for all stationary $\mu$ with $\sum_{j\in[m]}e^{\mu_j}=\sum_{i\in[n]}B_i$.
Let $q_j(\mu)\coloneqq \sum_{i\in[n]}B_ie^{\mu_j}/\sum_{j\in[m]}e^{\mu_j}$ for $j\in[m]$. Notice that $q_j(\mu)=e^{\mu_j}$ for $\mu$ with $\sum_{j\in[m]}e^{\mu_j}=\sum_{i\in[n]}B_i$ and $q_j(\mu+t\1)=q_j(\mu)$ for all $t\in\R$. We have the following estimate for all stationary points of $F$.
\begin{corollary}\label{co:mu}
    If $\mu$ is a stationary point of \eqref{eq:DC}, then for all $j\in[m]$
    \[q_j(\mu)\geq e^{\underline{\mu}}=\frac{\sum_{i\in[n]}B_i\min_{i,j}d_{ij}}{m\max_{i,j}d_{ij}}.\]
\end{corollary}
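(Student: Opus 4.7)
The plan is to combine the additive homogeneity established in Fact \ref{fact:homo} with the coordinate-wise sign information of the subdifferential given by Fact \ref{fact:underlined}. The key observation is that both the objective $F$ and the function $q_j$ are translation invariant along the direction $\1$: we have $F(\mu + t\1) = F(\mu)$ and $\partial F(\mu + t\1) = \partial F(\mu)$ by Fact \ref{fact:homo}, and a direct computation shows $q_j(\mu + t\1) = q_j(\mu)$ for all $t \in \R$. Hence without loss of generality we may normalize a stationary point.

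First, given a stationary point $\mu \in \cU^*$, I would choose $t = \log(\sum_{i\in[n]} B_i) - \log(\sum_{j\in[m]} e^{\mu_j})$ and set $\tilde{\mu} := \mu + t\1$. By construction, $\sum_{j\in[m]} e^{\tilde{\mu}_j} = \sum_{i\in[n]} B_i$, and by Fact \ref{fact:homo}, $\tilde{\mu}$ remains stationary, i.e., $0 \in \partial F(\tilde{\mu})$. In particular, this normalization places $\tilde{\mu}$ inside the regime where Fact \ref{fact:underlined} is applicable.

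Second, I would argue by contradiction: suppose $\tilde{\mu}_{j_0} < \underline{\mu}$ for some $j_0 \in [m]$. Since $\sum_{j\in[m]} e^{\tilde{\mu}_j} \geq \sum_{i\in[n]} B_i$ holds with equality, Fact \ref{fact:underlined} forces every $u \in \partial F(\tilde{\mu})$ to satisfy $u_{j_0} < 0$. This contradicts $0 \in \partial F(\tilde{\mu})$, so $\tilde{\mu}_j \geq \underline{\mu}$ for all $j \in [m]$. Consequently, using the normalization,
\[
q_j(\mu) = q_j(\tilde{\mu}) = \frac{\sum_{i\in[n]} B_i \cdot e^{\tilde{\mu}_j}}{\sum_{j'\in[m]} e^{\tilde{\mu}_{j'}}} = e^{\tilde{\mu}_j} \geq e^{\underline{\mu}}.
\]

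Finally, the closed-form expression follows by plugging in the definition of $\underline{\mu}$ from Fact \ref{fact:underlined}:
\[
e^{\underline{\mu}} = \exp\!\left(\log\!\left(\tfrac{\sum_{i\in[n]} B_i}{m}\right) - \max_{i,j}\log(d_{ij}) + \min_{i,j}\log(d_{ij})\right) = \frac{\sum_{i\in[n]} B_i \min_{i,j} d_{ij}}{m \max_{i,j} d_{ij}}.
\]
There is essentially no technical obstacle here; the only care needed is verifying that $q_j$ is translation invariant and that the normalization preserves the stationarity and the hypothesis $\sum_j e^{\tilde{\mu}_j} \geq \sum_i B_i$ required by Fact \ref{fact:underlined}. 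Both are immediate, so the corollary reduces cleanly to a contradiction argument based on the sign information already extracted in Fact \ref{fact:underlined}.
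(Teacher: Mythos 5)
Your proposal is correct and follows essentially the same route as the paper: the paper derives the corollary by noting that Fact \ref{fact:underlined} forces $\mu_j\geq\underline{\mu}$ for stationary points normalized to $\sum_{j\in[m]}e^{\mu_j}=\sum_{i\in[n]}B_i$ (a contradiction with $0\in\partial F(\mu)$ otherwise), and then uses $q_j(\mu)=e^{\mu_j}$ under this normalization together with the translation invariance $q_j(\mu+t\1)=q_j(\mu)$ and Fact \ref{fact:homo} to extend the bound to all stationary points. Your normalization-plus-contradiction write-up is exactly this argument, made explicit.
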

Furthermore, we can utilize the lower bound $\underline{\mu}$ to give bounds on $\inf_{\mu}F(\mu)$ and $\sup_{\mu}F(\mu)$.
\begin{Fact}[Lower \& Upper Bounds on $F$]\label{fact:F}
    \[\inf_{\mu}F(\mu)\geq \sum_{i\in[n]}B_i\left(\underline{\mu}-\max_{i,j}\log(d_{ij})\right)-\sum_{i\in[n]}B_i\log\left(\sum_{i\in[n]}B_i\right); \]
    \[ \sup_{\mu}F(\mu)\leq \sum_{i\in[n]}B_i\left(\log\left(\sum_{i\in[n]}B_i\right)-\min_{i,j}\log(d_{ij})\right)-\sum_{i\in[n]}B_i\log\left(\sum_{i\in[n]}B_i\right),\]
    where $ \underline{\mu}\coloneqq \log (\sum_{i\in[n]}B_i/m )-\max_{i,j}\log(d_{ij})+\min_{i,j}\log(d_{ij}).$
\end{Fact}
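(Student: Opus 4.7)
The plan is to exploit the additive homogeneity of $F$ (Fact~\ref{fact:homo}) to normalize the argument, then bound the two convex pieces of $F = \sum_i h_i - \ell$ separately using the lower-bound lemma (Fact~\ref{fact:underlined}).

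First I would observe that for any $\mu \in \R^m$, choosing $t = \log(\sum_{i\in[n]} B_i) - \log(\sum_{j\in[m]} e^{\mu_j})$ gives $\mu' := \mu + t\1$ satisfying $\sum_{j\in[m]} e^{\mu'_j} = \sum_{i\in[n]} B_i$, while $F(\mu') = F(\mu)$ by Fact~\ref{fact:homo}. So it suffices to bound $F(\mu)$ over vectors $\mu$ that obey this normalization. Under it, the smooth term becomes a constant:
\begin{equation*}
\ell(\mu) = \sum_{i\in[n]} B_i \log\left(\sum_{i\in[n]} B_i\right),
\end{equation*}
which supplies the last term appearing in both displayed bounds.

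For the upper bound on $F$, the normalization forces $\max_{j\in[m]} \mu_j \leq \log(\sum_{i\in[n]} B_i)$. Hence for each $i \in [n]$,
\begin{equation*}
h_i(\mu) = B_i \max_{j\in[m]}\{\mu_j - \log(d_{ij})\} \leq B_i\left(\log\left(\sum_{i\in[n]} B_i\right) - \min_{i,j}\log(d_{ij})\right),
\end{equation*}
and summing over $i$ then subtracting the constant $\ell(\mu)$ yields exactly the claimed upper bound.

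For the lower bound, I would invoke Fact~\ref{fact:underlined}. Because $\sum_j e^{\mu_j} = \sum_i B_i$, its contrapositive states that any index $j$ attaining the maximum $h_i(\mu)/B_i$ (i.e., $j \in J_i(\mu)$) must satisfy $\mu_j \geq \underline{\mu}$; since $J_i(\mu) \neq \emptyset$, picking any such $j_i \in J_i(\mu)$ gives
\begin{equation*}
h_i(\mu) = B_i(\mu_{j_i} - \log(d_{i,j_i})) \geq B_i\left(\underline{\mu} - \max_{i,j}\log(d_{ij})\right).
\end{equation*}
Summing and subtracting the constant value of $\ell(\mu)$ produces the lower bound. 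The only subtle step here is correctly deducing $\mu_{j_i} \geq \underline{\mu}$ from Fact~\ref{fact:underlined}, which depends crucially on having normalized $\mu$ first so that Fact~\ref{fact:underlined}'s hypothesis $\sum_j e^{\mu_j} \geq \sum_i B_i$ holds; I do not foresee any deeper obstacle beyond this bookkeeping.
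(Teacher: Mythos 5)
Your proof is correct. The normalization via Fact~\ref{fact:homo}, the observation that $\ell$ becomes the constant $\sum_{i\in[n]}B_i\log(\sum_{i\in[n]}B_i)$ on the slice $\{\mu:\sum_{j\in[m]}e^{\mu_j}=\sum_{i\in[n]}B_i\}$, and the upper bound coincide with the paper's argument. Where you genuinely diverge is the lower bound: the paper first argues that the infimum over the normalized slice is attained at a finite $\mu^*$ with $\mu^*_j\geq\underline{\mu}$ for all $j$ --- it shows that if some coordinate falls below $\underline{\mu}$, raising it to $\underline{\mu}$ leaves every $h_i$ unchanged (this is \eqref{eq:notinJ}) while strictly increasing $\ell$, so $F$ strictly decreases --- and only then evaluates $F(\mu^*)$. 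You instead bound $F(\mu)$ pointwise for \emph{every} normalized $\mu$: any $j_i\in J_i(\mu)$ must satisfy $\mu_{j_i}\geq\underline{\mu}$, which is indeed a valid contrapositive of Fact~\ref{fact:underlined} once you combine its conclusion with $\partial h_i(\mu)=\conv\{B_ie_j:j\in J_i(\mu)\}$ (if $\mu_{j_i}<\underline{\mu}$, then every $v_i\in\partial h_i(\mu)$ would have $v_{ij_i}=0$, contradicting $B_ie_{j_i}\in\partial h_i(\mu)$); hence $h_i(\mu)=B_i(\mu_{j_i}-\log(d_{ij_i}))\geq B_i(\underline{\mu}-\max_{i,j}\log(d_{ij}))$. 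Your route is slightly more economical, since it sidesteps the attainment-of-the-infimum step (the paper's coordinate-raising argument plus the implicit claim that a minimizer exists on the restricted set), at the mild cost of invoking the subdifferential description of $h_i$ explicitly; the paper's route, in exchange, produces the structural byproduct that minimizers on the normalized slice have all coordinates at least $\underline{\mu}$, in the spirit of Corollary~\ref{co:mu}. Both arguments deliver exactly the stated bounds.
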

\begin{proof}[Proof of Fact \ref{fact:F}]
    To begin, we prove that the function $F$ could attain the minimal value at a finite point. It suffices to consider $\mu\in\R^m$ with $\sum_{j\in[m]}e^{\mu_j}=\sum_{i\in[n]}B_i$ due to Fact \ref{fact:homo} and show $F(\mu)> F(\max\{\mu,\underline{\mu}\})$ if $\mu_j<\underline{\mu}$ for some $j\in[m]$. Here, $\max$ is an element-wise operator on $\mu$. If so, then the optimal solution $\mu^*$ with $\sum_{j\in[m]}e^{\mu^*_j}=\sum_{i\in[n]}B_i$ must satisfy $\underline{\mu}\leq\mu^*_j$ for all $j\in[m]$.

        We then show  $F(\mu)> F(\max\{\mu,\underline{\mu}\})$ for all $\mu\in\R^m$ with $\sum_{j\in[m]}e^{\mu_j}=\sum_{i\in[n]}B_i$ and $\mu_j<\underline{\mu}$ for some $j\in[m]$.
  By \eqref{eq:notinJ} and definition of $J_i$, we see that for all $a\in\R$ with 
 $\mu_j<a<\underline{\mu}$, if we increase $\mu_j$ to $a$, it still holds that $ J_i(\max\{\mu,a\})=J_i(\mu)$ and further $h_i(\mu)=h_i(\max\{\mu,a\})$. It follows that  
    \[h_i(\mu)=h_i(\max\{\mu,\underline{\mu}\}) \text{ for all } i\in[n].\]
    This, together with $F=\sum_{i\in[n]}h_i-\ell$, $\mu_j<\max\{\mu_j,\underline{\mu}\}$ for some $j\in[m]$, and the strict monotonic increasing property of $\ell$ w.r.t. each coordinate, implies $F(\mu)> F(\max\{\mu,\underline{\mu}\})$ as desired. We conclude that there is a finite optimal solution $\mu^*$  with $\sum_{j\in[m]}e^{\mu_j}=\sum_{i\in[n]}B_i$ satisfying $\underline{\mu}\leq\mu^*_j$ for all $j\in[m]$.

    Now, using $\underline{\mu}\leq\mu^*_j$, the lower bound for $\inf_{\mu}F(\mu)$ follows from 
    \[\begin{aligned}
    \inf_{\mu}F(\mu)=F(\mu^*)&=\sum_{i\in[n]}B_i\max_{j\in[m]}\{\mu^*_j-\log(d_{ij})\}-\sum_{i\in[n]}B_i\log\left(\sum_{i\in[n]}B_i\right) \\
    &\geq \sum_{i\in[n]}B_i\left(\underline{\mu}-\max_{i,j}\log(d_{ij})\right)-\sum_{i\in[n]}B_i\log\left(\sum_{i\in[n]}B_i\right).
    \end{aligned}
  \]
For the upper bound, it suffices to notice that for all $\mu\in\R^m$ with $\sum_{j\in[m]}e^{\mu_j}=\sum_{i\in[n]}B_i$, we have $\mu_j\leq \log(\sum_{i\in[n]}B_i)$, and hence
\[F(\mu)\leq\sum_{i\in[n]}B_i\left(\log\left(\sum_{i\in[n]}B_i\right)-\min_{i,j}\log(d_{ij})\right)-\sum_{i\in[n]}B_i\log\left(\sum_{i\in[n]}B_i\right). \]
\end{proof}
\subsection{Proof of Lemma \ref{le:dist}}
Noticing $\nabla_j\ell(\mu)=\sum_{i\in[n]}B_ie^{\mu_j}/\sum_{j\in[m]}e^{\mu_j}=q_j(\mu)$, the condition  $|u_j/q_j(\mu)|\leq \epsilon$  is equivalent to 
\[\left|\sum_{i\in[n]} \frac{v_{ij}}{q_j(\mu)}-1\right|\leq \epsilon.\]
    This, together with $p_j=q_j(\mu)$ and $x_{ij}=v_{ij}/p_j$, yields $|\sum_{i\in[n]}x_{ij}-1|\leq \epsilon$. Subsequenctly, we have
    \[1-\epsilon\leq\sum_{i\in[n]}x_{ij}\leq 1+\epsilon\leq \frac{1}{1-\epsilon},\]
    which accords with (A3) of Definition \ref{def:aCE}. To prove (A1), it suffices to combine $p_j=q_j(\mu)$ and $x_{ij}=v_{ij}/p_j$ to give $p^{\top}x_i=\sum_{j\in[m]}v_{ij}=B_i$. 
    
    It is left to check (A2). Indeed, via condition $v_i\in\partial h_i(\mu)$, we can prove an exact version of (A2), i.e., (E2), which is equivalent to \eqref{eq:e2} by Fact \ref{le:e2}.  Consider indices $j\in[m]$ satisfying $x_{ij}>0$. It holds that $v_{ij}>0$ by $x_{ij}=v_{ij}/p_j$, and we further have $j\in J_i(\mu)$ by $v_i\in\partial h_i(\mu)=\conv\{e_j:j\in J_i(\mu)\}$. Note that $J_i(\mu)=\argmax_j\{j\in[m]:p_j/d_{ij}\}$ by \eqref{eq:Jmax}.  We see that \eqref{eq:e2} holds, which completes the proof.

\subsection{Proof of Lemma \ref{le:bound}}
By the optimality of \eqref{eq:SQP}, it holds that
\begin{equation}\label{eq:op}
    \sum_{i\in[n]}v^k_i-\nabla\ell(\mu^k)+\eta(\mu^{k+1}-\mu^k)=0,\ \text{ where }v^k_i\in\partial h_i(\mu^{k+1}). 
\end{equation}
This, together with observation $\sum_{i\in[n]}v_i^{\top}\1=\sum_{i\in[n]}B_i=\nabla\ell(\mu^k)^{\top}\1$, implies that $\1^{\top}\mu^{k+1}=\1^{\top}\mu^k$ for all $k\geq0$. Hence, we see that $\1^{\top}\mu^k\equiv \1^{\top}\mu^0$ for all $k\in[n]$. Given this, to show the boundedness of $\{\mu^k\}_{k\geq0}$, it suffices to show that there is $a\in\R$ such that $\mu^k_j\geq a$ for all $k\geq0$ and $j\in[m]$.

Fact \ref{fact:underlined} and Fact \ref{fact:homo} ensure that for the point $\mu\in\R^m$ and the index $j\in[m]$ that satisfy $\mu_j<\underline{\mu}-\log(\sum_{i\in[n]}B_i/\sum_{i\in[n]}e^{\mu_j})$,  each $v_i\in \partial h_i(\mu)$ satisfies $v_{ij}=0$ for all $i\in[n]$. Notice that by $\1^{\top}\mu^k\equiv \1^{\top}\mu^0$ and the convexity of exponential functions, we have  $\sum_{i\in[n]}e^{\mu^{k+1}_j}\geq ne^{\1^{\top}\mu^0/n}$. We conclude that for all $k\geq0$,
\begin{equation}\label{eq:mua}
    \text{ if } \mu^{k+1}_j<a\coloneqq \underline{\mu}-\log\left(\frac{\sum_{i\in[n]}B_i}{ne^{\1^{\top}\mu^0/n}}\right), \text{ then for all }i\in[n],  \text{ every }v_i\in \partial h_i(\mu^{k+1})\text{ satisfies }  v_{ij}=0.
\end{equation}
Combining \eqref{eq:op} and \eqref{eq:mua}, we see that if $\mu^{k+1}_j<a$, then $\eta(\mu^{k+1}_j-\mu^k_j)=\nabla_j\ell(\mu^k)>0$, and hence $\mu^{k+1}_j>\mu^k_j$. It follows that $\mu^{k+1}_j\geq \min\{\mu^k_j,a\}$. Using $\mu^{k+1}_j\geq \min\{\mu^k_j,a\}$ for $k$ times, we have $\mu^k_j\geq \min\{\mu^0_j,a\}$, which completes the proof. 

\section{Missing Proofs for SGR}\label{appen:sgr}
The following lemmas serve as a basis of the proof of Lemma \ref{le:SP_CE}.
\begin{lemma}\label{le:tech0}
    Given an integer $m>1$ and  a constant $\gamma>2$,  we define the univariate function
    \[\phi:b\mapsto \frac{(1+\sum_{j\in[m-1]} b^{\gamma-1}_j) }{( 1+\sum_{j\in[m-1]} b^{\gamma}_j) }, \quad b\in[0,1].\] It holds that $\phi^*=\sup_{0\leq b\leq1 }\phi(b)\leq \frac{(m-1)^{1/\gamma}}{1-\frac{1.3}{\gamma} }$.
\end{lemma}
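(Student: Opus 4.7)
\textbf{Proof plan for Lemma \ref{le:tech0}.} My plan is to apply H\"older's inequality to reduce the multivariate supremum to a one-dimensional problem, and then to analyze the resulting univariate function through its unique critical point. For any $b\in[0,1]^{m-1}$, H\"older applied to $\sum_{j\in[m-1]} b_j^{\gamma-1}\cdot 1$ with conjugate exponents $\gamma/(\gamma-1)$ and $\gamma$ yields $\sum_{j\in[m-1]}b_j^{\gamma-1}\le (m-1)^{1/\gamma}\bigl(\sum_{j\in[m-1]}b_j^{\gamma}\bigr)^{(\gamma-1)/\gamma}$. Setting $C\coloneqq (m-1)^{1/\gamma}$ and $t\coloneqq (\sum_{j\in[m-1]} b_j^{\gamma})^{1/\gamma}\ge 0$, this gives $\phi(b)\le \psi(t)\coloneqq (1+Ct^{\gamma-1})/(1+t^{\gamma})$, so it suffices to bound $\sup_{t\ge 0}\psi(t)$.

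Next, I would locate the unique maximizer of $\psi$. Setting $\psi'(t)=0$ and clearing common factors reduces to $G(t)\coloneqq Ct^{\gamma}+\gamma t = C(\gamma-1)$; since $G$ is strictly increasing from $0$ to $\infty$, there is a unique positive root $t^{*}$. Using the relation $Ct^{*\gamma}=C(\gamma-1)-\gamma t^{*}$, the numerator and denominator of $\psi(t^{*})$ simplify to $(\gamma-1)(C-t^{*})/t^{*}$ and $\gamma(C-t^{*})/C$ respectively, giving the clean closed form $\psi(t^{*})=(\gamma-1)C/(\gamma t^{*})$. Since $\psi(0)=1$ and $\psi\to 0$ as $t\to\infty$, $t^{*}$ is the global maximizer.

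To finish, I would lower-bound $t^{*}$ by $t_{0}\coloneqq (\gamma-1)(\gamma-1.3)/\gamma^{2}$. By monotonicity of $G$, this is equivalent to $G(t_{0})\le C(\gamma-1)$, i.e., $Ct_{0}^{\gamma}\le (\gamma-1)(C\gamma-\gamma+1.3)/\gamma$. For $\gamma>2$ a direct check shows $t_{0}\in(0,1)$, hence $t_{0}^{\gamma}\le t_{0}$, and after plugging in $t_{0}$ and cancelling $(\gamma-1)/\gamma^{2}$ the inequality collapses to $C(\gamma^{2}-\gamma+1.3)\ge \gamma(\gamma-1.3)$. This is immediate from $C\ge 1$ (since $m>1$) together with the elementary identity $\gamma^{2}-\gamma+1.3-\gamma(\gamma-1.3)=0.3\gamma+1.3>0$. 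Combining everything, $\phi^{*}\le \psi(t^{*})=(\gamma-1)C/(\gamma t^{*})\le (\gamma-1)C/(\gamma t_{0})=C/(1-1.3/\gamma)=(m-1)^{1/\gamma}/(1-1.3/\gamma)$.

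The main obstacle is the final algebraic reduction: the constant $1.3$ appears to be tuned precisely so that the loose relaxation $t_{0}^{\gamma}\le t_{0}$ still leaves enough slack for the residual inequality to close under the mild bound $C\ge 1$. A smaller constant (say, replacing $1.3$ by $1$) would require handling $t_{0}^{\gamma}$ more carefully via $(1-x)^{\gamma}$-type estimates, so this choice is what keeps the proof elementary.
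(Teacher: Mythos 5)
Your proof is correct, and it reaches the paper's bound by a route that differs in its key reduction step. The paper first argues that a maximizer $b^*$ of $\phi$ must have all coordinates equal (by observing that each $b^*_j$ maximizes $b_j^{\gamma-1}-\phi^* b_j^{\gamma}$, forcing $b^*_j=(\gamma-1)/(\gamma\phi^*)$), which reduces the problem to the univariate function $\theta(t)=\bigl(1+(m-1)t^{\gamma-1}\bigr)/\bigl(1+(m-1)t^{\gamma}\bigr)$ on $[0,1]$; it then locates the unique critical point via $\zeta(t)=\gamma-1-\gamma t-(m-1)t^{\gamma}$, lower-bounds it by $\bar t=(m-1)^{-1/\gamma}(1-1.3/\gamma)$ using the estimate $(1-1.3/\gamma)^{\gamma}\le e^{-1.3}$, and bounds $\theta(t^*)\le 1/t^*$. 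You instead collapse the multivariate supremum via H\"older's inequality, obtaining $\phi(b)\le\psi(t)$ with $t=\bigl(\sum_j b_j^{\gamma}\bigr)^{1/\gamma}$, then carry out the analogous critical-point analysis for $\psi$ on $[0,\infty)$, with the exact closed form $\psi(t^*)=(\gamma-1)C/(\gamma t^*)$ and the purely algebraic lower bound $t^*\ge t_0=(\gamma-1)(\gamma-1.3)/\gamma^2$ via $t_0^{\gamma}\le t_0$. Your route buys a cleaner reduction (no need to argue existence and structure of an exact maximizer of $\phi$) and replaces the exponential estimate $e^{-1.3}$ by elementary algebra; the paper's route keeps the 1D variable inside $[0,1]$ and tracks the $(m-1)$-dependence directly in $\bar t$. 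Two small points you should make explicit in a write-up: the cancellation giving $\psi(t^*)=(\gamma-1)C/(\gamma t^*)$ requires $t^*\neq C$, which holds since $G(C)=C^{\gamma+1}+\gamma C>C(\gamma-1)$ so $t^*<C$; and the claim that $t^*$ is the global maximizer is cleanest from the sign of $\psi'$ (its numerator is $t^{\gamma-2}\bigl(C(\gamma-1)-G(t)\bigr)$, positive on $(0,t^*)$ and negative on $(t^*,\infty)$), rather than from the boundary values alone.
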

\begin{proof}
We first show that the optimal solution $b^*\in\argmax_{b\in[0,1]}\phi(b)$ have equal coordinates, i.e., $b^*_1=b^*_2=\cdots=b^*_{m-1}.$
Since $\phi(b)\leq \phi^*$ holds for all $b\in[0,1]$, we have
\[\tilde{\phi}(b)\coloneqq 1-\phi^*+\sum_{j\in[m-1]}\left( b^{\gamma-1}_j -\phi^*b^{\gamma}_j \right) \leq0 \text{ for all }b\in[0,1]. \]
Notice that $\tilde{\phi}(b^*)=0$ by ${\phi}(b^*)=\phi^*$. This means $b^*$ is an optimal solution of $\tilde{\phi}$, i.e., $b^*\in\argmax_{b\in[0,1]}\tilde{\phi}(b).$
Using the separable structure of $\tilde{\phi}$, we see that $b^*_j$ maximizes $ b^{\gamma-1}_j -\phi^*b^{\gamma}_j$.  Observe that $\phi^*\geq\phi(\frac25\1)>1$. It follows that for all $j\in[m-1]$,
$b^*_j={(\gamma-1})/{(\gamma\phi^*)}\in[0,1].$

Let $\eta=b^*_1$. Then, we have 
\[\phi^*=\phi(b^*)=\frac{1+(m-1)\eta^{\gamma-1}}{1+(m-1)\eta^{\gamma}}. \]
Let $\theta(t)\coloneqq (1+(m-1)t^{\gamma-1})/(1+(m-1)t^{\gamma})$ and $\theta^*=\sup_{t\in[0,1]}\theta(t)$. Clearly, $\theta^*$ is an upper bound for $\phi^*$. We then estimate $\theta^*$. The derivative of $\theta$ is given by
\[\theta^{\prime}(t)= \frac{(m-1)t^{\gamma-2} }{(1+(m-1)t^{\gamma})^2}\left(\gamma-1-\gamma t-(m-1)t^{\gamma}\right) .   \]
Observe that here the fraction is always positive for $t\in[0,1]$, and $\zeta(t)\coloneqq \gamma-1-\gamma t-(m-1)t^{\gamma}$ is strictly monotonically decreasing w.r.t. $t$ on $[0,1]$ with $\zeta(0)=\gamma-1>0$, 
and $\zeta(1)=-m<0$. We know that there is a unique maximizer $t^*\in(0,1)$ for $\theta$ on $[0,1]$ satisfying $\zeta(t^*)=0$.

Let $\bar{t}=(m-1)^{-1/\gamma}(1-\frac{1.3}{\gamma})$. 
Then, using $\log(1+a)\leq a$ for all $a>-1$, we have 
\[(m-1)\bar{t}^{\gamma}=\left(1-\frac{1.3}{\gamma}\right)^{\gamma}=e^{\gamma\log(1-\frac{1.3}{\gamma})}\leq e^{-1.3},\] and further
\begin{equation*}
        \zeta(\bar{t}) \geq  \gamma-1-\gamma(m-1)^{1/\gamma} \bar{t}-(m-1)\bar{t}^{\gamma} \geq 1.3-1-e^{-1.3} >0.
\end{equation*}
This, together with the monotonic decreasing of $\zeta$ and $\zeta(t^*)=0$, implies $t^*\geq \bar{t}$. It follows that
\[\theta^*=\theta(t^*)\leq \frac{\frac1{t^*}+(m-1)(t^*)^{\gamma-1}}{1+(m-1)(t^*)^{\gamma}}= \frac{1}{t^*}\leq \frac{1}{\bar{t}}, \]
where the first inequality uses $t^*\in(0,1)$. Recall that $\theta^*$ is an upper bound for $\phi^*$. We conclude that
$\phi^*\leq \theta^*\leq \frac{(m-1)^{1/\gamma}}{1-\frac{1.3}{\gamma} }$. 
\end{proof}

\begin{lemma}\label{le:tech}
    Consider $a_1,a_2,\ldots,a_m>0$ with $m>1$ and $\gamma>2+\log(m-1)$. The following inequality holds:
    \begin{equation*}
        \frac{\sum_{j\in[m]}a_j^{\gamma-1}}{\sum_{j\in[m]}a_j^{\gamma}}\leq \frac{1}{\max_{j\in[m]}\{a_j\} }\cdot\frac{(m-1)^{1/\gamma}}{1-\frac{1.3}{\gamma} }.
    \end{equation*}
\end{lemma}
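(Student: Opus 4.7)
The plan is to reduce Lemma \ref{le:tech} directly to Lemma \ref{le:tech0} by a scaling/normalization argument, since the expression on the left-hand side is nearly homogeneous and the bound on the right-hand side is exactly of the shape produced by Lemma \ref{le:tech0}.

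First I would let $a^* \coloneqq \max_{j\in[m]} a_j > 0$ and, without loss of generality (relabeling), assume $a^* = a_1$. I would then factor $a_1^{\gamma-1}$ out of the numerator and $a_1^{\gamma}$ out of the denominator, which gives
\[
\frac{\sum_{j\in[m]} a_j^{\gamma-1}}{\sum_{j\in[m]} a_j^{\gamma}}
= \frac{1}{a_1}\cdot\frac{1+\sum_{j=2}^m (a_j/a_1)^{\gamma-1}}{1+\sum_{j=2}^m (a_j/a_1)^{\gamma}}.
\]
Setting $b_j \coloneqq a_j/a_1 \in [0,1]$ for $j = 2, \ldots, m$, the right-hand side is precisely $\frac{1}{a_1}\phi(b)$, where $\phi$ is the function defined in Lemma \ref{le:tech0}.

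Next I would verify that the hypothesis of Lemma \ref{le:tech0} is satisfied: here $\gamma > 2+\log(m-1) \geq 2$ (since $m \geq 2$), so Lemma \ref{le:tech0} applies and gives
\[
\phi(b) \le \phi^* \le \frac{(m-1)^{1/\gamma}}{1-\tfrac{1.3}{\gamma}}.
\]
Combining this with $a_1 = \max_{j\in[m]} a_j$ immediately yields the claimed inequality. Note that the stronger assumption $\gamma > 2+\log(m-1)$ in Lemma \ref{le:tech} is not needed for this reduction step; it is presumably there because the target bound itself only becomes useful (e.g., smaller than $1/\max_j a_j$ after some margin) when $\gamma$ is large enough compared with $\log(m-1)$, which is how the lemma gets invoked later in the analysis of SGR.

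There is essentially no obstacle: the main lemma hides the entire optimization argument inside Lemma \ref{le:tech0}, and the content here is just recognizing that the ratio $\sum a_j^{\gamma-1}/\sum a_j^{\gamma}$, once rescaled by the largest coordinate, is exactly $\phi(b)/a^*$. The only thing to be careful about is the edge case $m=1$, which is excluded by hypothesis, so the empty sum in $\phi$ does not arise.
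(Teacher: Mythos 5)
Your proposal is correct and is essentially identical to the paper's own proof: both factor out the maximal coordinate $\max_j a_j$, rewrite the ratio as $\frac{1}{\max_j a_j}\cdot\frac{1+\sum_{j} b_j^{\gamma-1}}{1+\sum_{j} b_j^{\gamma}}$ with $b_j\in(0,1]$, and invoke Lemma \ref{le:tech0}. Your side remark that only $\gamma>2$ is needed for this reduction is also consistent with how the paper uses the hypotheses.
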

\begin{proof}
    Let $\overline{a}=\max_{j\in[m]}\{a_j\}$ and suppose $a_m=\overline{a}$ without loss of generality. Write 
    \[b_j=a_j/\overline{a}\text{ and }\varphi(\gamma)={\sum_{j\in[m]}a_j^{\gamma-1}}/{\sum_{j\in[m]}a_j^{\gamma}}.\] Then, we have 
    \[ \varphi(\gamma)=\frac{1}{\overline{a}}\cdot \frac{1+\sum_{j\in[m-1]} b^{\gamma-1}_j }{1+\sum_{j\in[m-1]} b^{\gamma}_j }\text{ and }b_j\in(0,1]. \]
The desired inequality directly follows from Lemma \ref{le:tech0}.
    
\end{proof}
\subsection{Proof of Lemma \ref{le:SP_CE}}
 By the definition of $v_{ij}$, we see that $\sum_{j\in[m]}v_{ij}=B_i$. This , together with $x_{ij}=v_{ij}/p_j$, implies $p^{\top}x_i=B_i$, which proves (A1) of Definition \ref{def:aCE}. 
    
    Notice that $\nabla_jF_{\delta}(\mu)=\sum_{i\in[n]}v_{ij}-q_j(\mu)$. The condition $|\nabla_j F_{\delta}(\mu)/q_j(\mu)|\leq \epsilon $ implies \[\left|\sum_{i\in[n]}\frac{v_{ij}}{q_j(\mu)}-1\right|\leq\epsilon.\] Using $p_j=q_j(\mu)$ and $x_{ij}=v_{ij}/p_j$ , we further have $|\sum_{i\in[n]}x_{ij}-1|\leq\epsilon$, and hence
    \[ 1-\epsilon\leq \sum_{i\in[n]}x_{ij}\leq 1+\epsilon\leq\frac{1}{1-\epsilon}, \]
    which accords with (A3).
    It is left to verify (A2) of Definition \ref{def:aCE}. Recall that $p^{\top}x_i=B_i$. For all $y_i\in\R^m_+$ satisfying $p^{\top}y_i\geq p^{\top}x_i=B_i$, a simple estimate 
    $d_i^{\top}y_i\geq \frac{B_i}{\max_{j\in[m]}\{\frac{p_j}{d_{ij}}\} }$ holds. 
    Hence, to show (A2), it suffices to show 
    \begin{equation}\label{eq:dijxj}
        (1-\epsilon)d_i^{\top}x_i\leq \frac{B_i}{\max_{j\in[m]}\{\frac{p_j}{d_{ij}}\} } .
    \end{equation}
    We then estimate $d_i^{\top}x_i$:
    \begin{equation}\label{eq:dijxj2}
        \begin{aligned}
            d_i^{\top}x_i = \sum_{j\in[m]} d_{ij}\frac{v_{ij}}{p_j} 
            & = \sum_{j\in[m]} \frac{d_{ij}}{p_j}\cdot \frac{B_ie^{\frac{\mu_j-\log(d_{ij})}{\delta}} }{{\sum}_{j\in[m]} e^{\frac{\mu_j-\log(d_{ij})}{\delta}} } \\
            & = B_i\sum_{j\in[m]} \frac{d_{ij}}{p_j}\cdot  \frac{\left(\frac{e^{\mu_j}}{d_{ij}}\right)^{\frac1{\delta}} }{\sum_{j\in[m]}\left(\frac{e^{\mu_j}}{d_{ij}}\right)^{\frac1{\delta}} }\\
            & =  B_i\sum_{j\in[m]} \frac{d_{ij}}{p_j}\cdot  \frac{\left(\frac{p_j}{d_{ij}}\right)^{\frac1{\delta}} }{\sum_{j\in[m]}\left(\frac{ p_j }{d_{ij}}\right)^{\frac1{\delta}} }\\
            & =  B_i\sum_{j\in[m]}  \frac{\left(\frac{p_j}{d_{ij}}\right)^{\frac1{\delta}-1} }{\sum_{j\in[m]}\left(\frac{ p_j }{d_{ij}}\right)^{\frac1{\delta}} }, \\
        \end{aligned}
    \end{equation}
where the second equality uses the expression of $v_{ij}$ and the forth one uses $p_j={\sum_{i\in[n]}B_i}e^{\mu_j}/{\sum_{j\in[m]} e^{\mu_j}}.$
Further, applying Lemma \ref{le:tech} to \eqref{eq:dijxj2} and noticing the condition $\epsilon\geq(1.3+\log(m-1))\delta$ ensures
\[\epsilon\geq 1.3\delta\cdot e^{-\delta\log(m-1)}+1-e^{-\delta\log(m-1)}=1-(m-1)^{-\delta}(1-1.3\delta) ,\]
we directly have
\[ d_i^{\top}x_i\leq \frac{B_i}{\max_{j\in[m]}\{\frac{p_j}{d_{ij}}\}}\cdot\frac{(m-1)^{\delta}}{1-{1.3}\delta }\leq \frac{B_i}{\max_{j\in[m]}\{\frac{p_j}{d_{ij}}\}}\cdot\frac1{1-\epsilon}.  \]
This accords with \eqref{eq:dijxj}, which completes the proof.
\subsection{Proof of Fact \ref{fact:underlined2}}
    By the expression of $\nabla F_{\delta}$, the inequality $\nabla_{j_0}F_{\delta}(\mu)<0$ is reduced to
    \[\mathlarger{\sum}\limits_{i\in[n]}B_i\left(\frac{e^{\frac{\mu_{j_0}-\log(d_{ij_0})}{\delta}}}{{\sum}_{j\in[m]} e^{\frac{\mu_j-\log(d_{ij})}{\delta}} } \right)<q_{j_0}(\mu)=\mathlarger{\sum}\limits_{i\in[n]}B_i\left(\frac{e^{\mu_{j_0}}}{{\sum}_{j\in[m]}e^{\mu_j}}\right),\]
    which can be further reformulated as
     \[ \mathlarger{\sum}\limits_{i\in[n]}B_i\left(\frac{1}{{\sum}_{j\in[m]} e^{\frac{\mu_j-\mu_{j_0}+\log(d_{ij_0j_0})-\log(d_{ij})}{\delta}} } \right)<q_{j_0}(\mu)=\mathlarger{\sum}\limits_{i\in[n]}B_i\left(\frac{1 }{{\sum}_{j\in[m]}e^{\mu_j-\mu_{j_0}}}\right). \]
    Let $a=\max_{i,j}\{\log(d_{ij})\}-\min_{i,j}\{\log(d_{ij})\}$. It suffices to show  
    \begin{equation}\label{eq:mujmu0}
        \sum_{j\in[m]} e^{\frac{\mu_j-\mu_{j_0}-a}{\delta}}> \sum_{j\in[m]}e^{\mu_j-\mu_{j_0}}.
    \end{equation}
    We partition $[m]$ into two sets:
     \[J_0\coloneqq \{j:\mu_j-\mu_{j_0}\leq a/(1-\delta)+\log(2)\}\quad\text{ and }\quad J_1=\{j:\mu_j-\mu_{j_0}> a/(1-\delta)+\log(2)\}.\]
     We then show $J_1\neq\emptyset$.
     Note that by $\sum_{j\in[m]}q_j(\mu)=\sum_{i\in[n]}B_i$ and $q_j>0$, there exists an indexs $j_1\in[m]$ such that $q_{j_1}(\mu)\geq\sum_{i\in[n]}B_i/m$, i.e., 
     \begin{equation}\label{eq:muj1}
         \mu_{j_1}\geq\log\left(\sum_{j\in[m]}e^{\mu_j}\right)-\log(m). 
     \end{equation}
     On the other hand, by $q_{j_0}(\mu)<e^{\underline{\mu_{\delta}}}$, we have
     \begin{equation}\label{eq:muj0}
     \mu_{j_0}<\log\left(\sum_{j\in[m]}e^{\mu_j}\right)-\log\left(\sum_{i\in[n]}B_i\right)+\underline{\mu_{\delta}}.    
     \end{equation}
     Combining \eqref{eq:muj1} and \eqref{eq:muj0}, using the definition of $\underline{\mu_{\delta}}$, we see that 
     \begin{equation}\label{eq:muj1_muj0}
         \mu_{j_1}-\mu_{j_0}\geq \log\left(\frac{\sum_{i\in[n]}B_i}{m}\right)-\underline{\mu_{\delta}}>\frac{a}{(1-\delta)}+\log(2),
     \end{equation}
     which implies $j_1\in J_1$. Therefore, the index set $J_1$ is non-empty.
     
     The definition of $J_1$, together with $\delta\leq 1/(2+\log(m-1))< \frac12$, yields that for all $j\in J_1$, 
     \[\frac{\mu_{j}-\mu_{j_0}-a}{\delta}+\log\left(\frac12\right)\geq\mu_{j}-\mu_{j_0}.\]
     It follows that for all $j\in J_1$,
     \begin{equation}\label{eq:J1mu}
         \frac12e^{\frac{(\mu_{j}-\mu_{j_0}-a)}{\delta}}>e^{\mu_{j}-\mu_{j_0}}.
     \end{equation}
     We then estimate $ {\sum}_{j\in[m]} e^{\frac{\mu_j-\mu_{j_0}-a}{\delta}}- {\sum}_{j\in[m]}e^{\mu_j-\mu_{j_0}}$ to prove \eqref{eq:mujmu0}.
     \begin{equation*}
         \begin{aligned}
          &\sum\limits_{j\in[m]} e^{\frac{\mu_j-\mu_{j_0}-a}{\delta}}- \sum\limits_{j\in[m]}e^{\mu_j-\mu_{j_0}}\\
             =&\sum_{j\in J_0}\left( e^{\frac{\mu_j-\mu_{j_0}-a}{\delta}}-e^{\mu_j-\mu_{j_0}}   \right)+\sum_{j\in J_1}\left( e^{\frac{\mu_j-\mu_{j_0}-a}{\delta}}-e^{\mu_j-\mu_{j_0}}   \right) \\
             \geq&  \sum_{j\in J_0}-e^{a/(1-\delta)+\log(2)}+\sum_{j\in J_1}\frac12e^{\frac{\mu_{j}-\mu_{j_0}-a}{\delta}} \\
             \geq &-2me^{a/(1-\delta)}+\frac12e^{\frac{\mu_{j_1}-\mu_{j_0}-a}{\delta}}\\
             \geq & -2me^{a/(1-\delta)}+\frac12e^{\frac{-\underline{\mu_{\delta}}+\log(\sum_{i\in[n]}B_i/m)-a}{\delta}}\geq0.
         \end{aligned}
     \end{equation*}
     Here, the first inequality uses $e^{\frac{\mu_j-\mu_{j_0}-a}{\delta}}\geq0$, $\mu_j-\mu_{j_0}\leq a/(1-\delta)+\log(2)$ for $j\in J_0$, and \eqref{eq:J1mu}, the second one uses $j_1\in J_1$, the third one is due to the first inequality of \eqref{eq:muj1_muj0}, and the last one follows from the definition of $\underline{\mu_{\delta}}$. We complete the proof.
\subsection{Proof of Lemma \ref{le:round}}
 We first show that Algorithm \ref{al:round} stops in $m$ steps.  If $J_0=\emptyset$, then the algorithm directly stops. For case $J_0\neq\emptyset$, 
   notice $J_k\subseteq[m]$ and the fact that $J_k\subsetneqq J_{k+1}$ by Line 8. We see that Algorithm \ref{al:round} stops in $m$ steps.

\textit{Proving {\rm (i)}:} The conclusion is trivial when $J_0=\emptyset$. For the case $J_0\neq\emptyset$,  Algorithm \ref{al:round} stops after conducting Line 11, which ensures $\1^{\top}\mu=\1^{\top}\mu^0$.

\textit{Proving {\rm (ii)}:} Notice $F_{\delta}(\mu+t\1)=F_{\delta}(\mu)$ for all $t\in\R$. In Line 11 we have $F_{\delta}(\mu^{out})=F_{\delta}(\mu^{new})$. In Line 1, we show $F(\mu^1)\leq F(\mu^0)$. 
Let $\mu^s$ denote $s\mu^1+(1-s)\mu^0$. By the mean value function theorem and noticing $\mu^1_j=\mu_j^0$ for $j\notin J_{0}$, there is $\bar{s}\in(0,1)$ such that 
\[F(\mu^1)-F(\mu^0)=\nabla F(\mu^{\bar{s}})^{\top}(\mu^1-\mu^0)=\sum_{j\in J_0}\nabla_j F(\mu^s)(\mu^1_j-\mu^0_j).\]
By $\mu^1_j\geq\mu^0_j$ for $j\in J_0$, to show $F(\mu^1)-F(\mu^0)\leq0$, it suffices to show $\nabla_j F(\mu^s)\leq0$. Let $j_0\in\argmax_{j\in J_0}\{\mu^0_j\}$. Then, for $j\in J_0$, it holds that $e^{\mu^1_j}=e^{\mu^1_{j_0}}$ by definition of $\mu^1$ and further
\begin{equation}\label{eq:qmus}
    q_j(\mu^s)\leq b\cdot\frac{e^{\mu^0_{j_0}}}{\sum_{j\in[m]}e^{\mu^s_j} }\leq b\cdot\frac{e^{\mu^0_{j_0}}}{\sum_{j\in[m]}e^{\mu^0_j} } = q_{j_0}(\mu^0)<e^a,\text{ for all }s\in[0,1]
\end{equation}
where the first inequality uses $\mu^s_j\leq \max_{j\in J}\{\mu^0_j\}=\mu^0_{j_0}$ for $j\in J_0$, the second one uses $\mu^s_j\geq\mu^0_j$, and the third one uses the definition of $J_0$ and $j_0\in J_0$. It follows from Fact \ref{fact:underlined2} that $\nabla_jF_{\delta}(\mu)<0$, which proves $F(\mu^1)\leq F(\mu^0)$.

It is left to show: $F_{\delta}(\max\{\mu^k,c_k\})\leq F_{\delta}(\mu^k)$ in Line 7 of each iteration and $F_{\delta}(\max\{\mu^k,{\rm thres}_k\})\leq F_{\delta}(\mu^k)$ in Line 10 of last iteration, where $\max$ is an element-wise operator on $\mu^k$. We only provide a proof for $F_{\delta}(\max\{\mu^k,c_k\})\leq F_{\delta}(\mu^k)$ in Line 7 since the other one uses the same arguments.

To begin, by the definition of $c_k$, we have $\mu^k_j\geq c_k$ for all $j\notin J_{k-1}$. It follows that 
$\max\{\mu^k_j,c_k\}=\mu^k_j$ for $j\notin J_{k-1}$. Moreover, one can verify by induction that in Line 7 of each iteration, 
\begin{equation}\label{eq:induction}
\max_{j\in J_{k-1}}\{\mu^k_j\}\leq \min_{j\notin J_{k-1}}\{\mu^k_j\}=c_k;\quad \mu^k_j \text{ equals for all } j\in J_{k-1}.    
\end{equation}
It follows that $\max\{\mu^k_j,c_k\}=c_k$ for $j\in J_{k-1}$ and $\max\{\mu^k_j,c_k\}=\mu^k_j$ for $j\notin J_{k-1}$ . 
Let $\mu^t=t\max\{\mu^k,c_k\}+(1-t)\mu^k$, where $t\in[0,1]$. Then, for $j\notin J_{k-1}$, we have $\mu^t_j=\mu^k_j$;  for $j\in J_{k-1}$,  $\mu^t_j$ equals and $\mu^t_j\leq c_k$.

Next, we prove that $q_j(\mu^t)\leq e^a$ for $j\in J_{k-1}$ for all $t\in[0,1]$ via the following estimate: 
\begin{equation}\label{eq:qmut}
    \begin{aligned}
    q_j(\mu^t)=b\cdot \frac{e^{\mu^t_j} }{{\sum_{j\in J_{k-1}}e^{\mu^t_j }+\sum_{j\notin J_{k-1}}e^{\mu^t_j} }}&=b\cdot \frac{e^{\mu^t_j}}{{ |J_{k-1}|e^{\mu^t_j}+\sum_{j\notin J_{k-1}}e^{\mu_j}}}\\
    &<b\cdot \frac{e^{{\rm thres}_k}}{{ |J_{k-1}|e^{{\rm thres}_k}+\sum_{j\notin J_{k-1}}e^{\mu_j}}}=e^a, \\
\end{aligned} 
\end{equation}
where the second equality follows from  $\mu^t_j=\mu^k_j$ for $j\notin J_{k-1}$ and $\mu^t_j$ equals for $j\in J_{k-1}$; the inequality uses the strict monotonic increasing property of the function $t\mapsto e^t/(|J_{k-1}|e^t+\kappa)$ ($\kappa>0$) and $\mu^t_j\leq c_k<{\rm thres}_k$ for $j\in J_{k-1}$, which is ensured by definition of $c_{k}$.

Now, by the mean value function theorem, there exists $\bar{t}\in[0,1]$ such that
\[F_{\delta}(\max\{\mu^k,c_k\})-F_{\delta}(\mu^k)=\nabla F_{\delta}\left(\mu^{\bar{t}}\right)^{\top} (\max\{\mu^k,c_k\}-\mu^k)=\sum_{j\in J_{k-1}}\nabla_j F_{\delta}\left(\mu^{\bar{t}}\right)(\max\{\mu^k_j,c_k\}-\mu^k_j).\]
Here, the second equality is due to $\max\{\mu_j,c_k\}=\mu_j$ for $j\notin J_{k-1}$.
To show $F_{\delta}(\max\{\mu^k,c_k\})\leq F_{\delta}(\mu^k)$, by $\mu^k_j\leq c_k$ for $j\in J_{k-1}$, it suffices to show $\nabla_j F_{\delta}(\mu^{\bar{t}})\leq0$ for $j\in J_{k-1}$. This can be seen from \eqref{eq:qmut}, $a\leq \underline{\mu_{\delta}}$, and Fact \ref{fact:underlined2}.

   \textit{Proving {\rm (iii)} :} If $J_0=\emptyset$, then by definition $q_j(\mu^0)\geq e^a$ and $\mu^{out}=\mu^0$. We consider $J_0\neq\emptyset$. We first show that $\mu^k_j\leq {\rm thres}_k$ for $j\in J_{k-1}$. 
   
   To begin, we need to show that  $\mu^1_j\leq {\rm thres}_1$ for $j\in J_0$. Let $s=1$ in \eqref{eq:qmus}. We have $q_j(\mu^1)<e^a$ for $j\in J_0$. This, together with the definition of ${\rm thres}_1$ and  the fact that $\mu^1_j$ equals for $j\in J_0$, implies that for $j\in J_0$,
   \[b\cdot\frac{e^{{\rm thres}_1 }}{|J_0|e^{{\rm thres}_1}+\sum_{j\notin J_0}e^{\mu^1_j}}=e^a>q_j(\mu^1)=b\cdot\frac{e^{\mu^1_j}}{|J_0|e^{\mu^1_j}+\sum_{j\notin J_0}e^{\mu^1_j}}. \]
  This, together with the the strict monotonic increasing of the function $t\mapsto e^t/(|J|e^t+\kappa)$ ($\kappa>0$), implies $\mu^1_j<{\rm thres}_1$, $j\in J_0$. 
  
  On the other hand, recall that \eqref{eq:induction} ensures that in Line 7, $\mu_j^k\leq c_k$ for $j\in J_{k-1}$. It follows from the definition of $\Delta J_k$ that $\mu_j^k\leq c_k<{\rm thres}_k$ for $j\in J_{k-1}$ for $k\geq2$. 
   
   Therefore, we always have $\mu^k_j<{\rm thres}_k$ for $j\in J_{k-1}$. Notice that in Line 10, it holds that $\Delta J_{k}=\emptyset$, which implies $\mu_j^k\geq {\rm thres}_k$ for $j\notin J_{k-1}$. It follows that for $\mu^{new}=\max\{\mu^k,{\rm thres}_k\}$ and $j\in J_{k-1}$, 
\[q_j(\mu^{new})=b\cdot\frac{e^{\max\{\mu^k_j,{\rm thres}_k\}}}{\sum_{j\in J_{k-1}}e^{\max\{\mu_j^k,{\rm thres}_k\}}+\sum_{j\notin J_{k-1}}e^{\mu^k_j}}=b\cdot\frac{e^{{\rm thres}_k}}{\sum_{j\in J_{k-1}}e^{{\rm thres}_k  }+\sum_{j\notin J_{k-1}}e^{\mu^k_j}}=e^a. \]
For $j\notin J_{k-1}$, using $\mu_j^k\geq {\rm thres}_k\geq \mu_{j^{\prime}}^k$ for $j^{\prime}\in J_{k-1}$, we have $q_j(\mu^{new})\geq q_{j^{\prime}}(\mu^k)=e^a$. Hence, in Line 10, one has  $q_j(\mu^{new})\geq e^{a}$ for all $j\in[m]$. By $q_j(\mu+t\1)=q_j(\mu)$, we see that  in Line 11,  $q_j(\mu^{out})\geq e^{a}$ for all $j\in[m]$, which completes the proof.
\subsection{Proof of Theorem \ref{th:SGM}}
     By Lemma \ref{le:SP_CE}, the SGR finds a $\epsilon$-CE in the $k$-th iteration if for all $j\in[m]$,
    \begin{equation}\label{eq:SP_stop}
    \left|\frac{\nabla_j F_{\delta}(\hat{\mu}^k)}{q_j(\hat{\mu}^k)}\right|\leq \epsilon.     
    \end{equation}
    Recall  that $q_j(\hat{\mu}^k)\geq e^{\underline{\mu_{\delta}}}$ by Lemma \ref{le:round}. Using the definition of $\underline{\mu_{\delta}}$ in Fact \ref{fact:underlined2}, the following is a sufficient condition for \eqref{eq:SP_stop}:
    \begin{equation}\label{eq:SP_stop2}
        \|\nabla F_{\delta}(\hat{\mu}^k)\|\leq \epsilon\cdot e^{\underline{\mu_{\delta}}}=\cO\left(\frac{n\epsilon}{m} \right).
    \end{equation}
To obtain \eqref{eq:SP_stop2}, we develop the sufficient descent property for SGR. By Fact \ref{fact:F_epsilon}, the gradient $\nabla F_{\delta}$ is $\sum_{i\in[n]}B_i(1/\epsilon+1)$-Lipschitz continuous. We have
\[F_{\delta}(\mu^{k+1})-F_{\delta}(\hat{\mu}^{k})\leq \nabla F_{\delta}(\hat{\mu}^{k})^{\top}(\mu^{k+1}-\hat{\mu}^{k})+\frac{\sum_{i\in[n]}B_i(1/\delta+1)}{2}\|\mu^{k+1}-\hat{\mu}^{k} \|^2.   \]
Using $\mu^{k+1}-\hat{\mu}^{k}=-\eta   \nabla F_{\delta}(\hat{\mu}^{k})$ and $\eta=\delta/(2\sum_{i\in[n]}B_i(1+\delta))$, we further have
\begin{equation}\label{eq:SP_sufficient}
    F_{\delta}(\mu^{k+1})-F_{\delta}(\hat{\mu}^{k})\leq-\frac{\eta}{2} \| \nabla F_{\delta}(\hat{\mu}^{k})\|^2. 
\end{equation}
On the other hand, by Lemma \ref{le:round}, we have
\begin{equation}\label{eq:SP_descent}
    F_{\delta}(\hat{\mu}^{k+1})\leq F_{\delta}(\mu^{k+1})\quad\text{ for all }k\geq0.
\end{equation}
Summing \eqref{eq:SP_sufficient} and \eqref{eq:SP_descent} over $k=0,1,\ldots,K-1$, we have
\[F_{\delta}(\hat{\mu}^{K})-F_{\delta}(\hat{\mu}^{0})\leq-\frac{\eta}{2}\sum_{k=0}^{K-1}  \| \nabla F_{\delta}(\hat{\mu}^{k})\|^2\leq -\frac{\eta}{2}K\cdot\min_{0\leq k\leq K-1}\{\| \nabla F_{\delta}(\hat{\mu}^{k})\|^2\} . \]
This, together with Fact \ref{fact:F_epsilon} (i) and Fact \ref{fact:F}, implies that
\begin{equation}\label{eq:minFeps}
\begin{aligned}
    \min_{0\leq k\leq K-1}\{\| \nabla F_{\delta}(\hat{\mu}^{k})\|^2\}& \leq 2\frac{\sup_{\mu}F(\mu)-\inf_{\mu}F(\mu)+\delta\log(m)\sum_{i\in[n]}B_i}{\eta K}\\
    &\leq 2\sum_{i\in[n]}B_i\cdot \frac{2\log\left(\frac{\max_{i,j}\{d_{ij}\}}{\min_{i,j}\{d_{ij}\}}\right)+(1+\delta)\log(m)}{\eta K}\\
    & = 2(1+\delta)\left(\sum_{i\in[n]}B_i\right)^2\cdot \frac{2\log\left(\frac{\max_{i,j}\{d_{ij}\}}{\min_{i,j}\{d_{ij}\}}\right)+(1+\delta)\log(m)}{\gamma\delta K},\\
    &\leq 2(1+\delta)n^2\max_{i\in[n]}B_i^2\cdot \frac{2\log\left(\frac{\max_{i,j}\{d_{ij}\}}{\min_{i,j}\{d_{ij}\}}\right)+(1+\delta)\log(m)}{\gamma\delta K},
\end{aligned}
\end{equation}
where the equality uses definition of $\eta$.

Based on \eqref{eq:SP_stop2} and definition of $\delta$, to ensure \eqref{eq:minFeps}, it suffices to let $K=\tilde{\cO}\left(\frac{m^2}{\epsilon^3}\right).$
Recall that the computation of gradient and rounding in each iteration require $\cO(mn)$ and $\cO(m^2)$ time, respectively. The total time complexity is at most $\tilde{\cO}(m^3(m+n)/\epsilon^3)$.

\section{Rounded DCA}\label{appen:round_dca}
In this section, we present the formal algorithm framework of rounded DCA, i.e., Algorithm \ref{al:round_dca}, and its non-asymptotic rate for finding an approximate CE. Since the proof of non-asymptotic rate of rounded DCA is similar to that of SGR, we only provide a brief proof here.
\begin{algorithm}[htbp]
	\caption{Rounded DCA}
	\begin{algorithmic}[1]
		\REQUIRE{Parameters $\epsilon$, $\underline{\mu}$, initial point $\mu^0$, maximum iteration number $k_{max}$, stepsize $\eta$  }
        \STATE Let $k=0$, $a<\underline{\mu}$, $b=\sum_{i\in[n]}B_i$
        \WHILE{ $k\leq k_{max}$ }
        \STATE $\hat{\mu}^k=$Round$(a,\mu^k,b)$
         \IF{ $|\nabla_j g(\hat{\mu}^k)-\nabla_j g(\mu^{k+1})|/q_j(\hat{\mu}^k)<\epsilon$ for all $j\in[m]$ }
          \STATE  \textbf{break}
         \ENDIF
          \STATE Solve \eqref{eq:SQP} via Proposition \ref{pro:subproblem} with $\mu^k$ replaced by $\hat{\mu}^k$
        \STATE $k=k+1$
        \ENDWHILE{ }
		\ENSURE{$\hat{\mu}^k$}
	\end{algorithmic}
	\label{al:round_dca} 
\end{algorithm}

The rounding procedure for DCA is the same as that of SGR except that $\underline{\mu_\epsilon}$ is replaced with $\underline{\mu}\coloneqq \log (\sum_{i\in[n]}B_i/m )-\max_{i,j}\log(d_{ij})+\min_{i,j}\log(d_{ij})$. Using similar arguments of Lemma \ref{le:round}, we can get: (i) $\1^{\top}\hat{\mu}^k=\1^{\top}{\mu}^0$; (ii) $F(\hat{{\mu}}^k)\leq F(\mu^k)$; (iii) $q_j(\hat{\mu}^k)\geq e^a$ for all $j\in[m]$. Then, using Lemma \ref{le:suff} and following the arguments of Theorem \ref{th:SGM}, we have
\begin{equation}\label{eq:round_dca_1}
    \min_{0\leq k\leq K-1}\{\|\hat{\mu}^k-\mu^{k+1}\|^2\}\leq 2\cdot\frac{\sup_{\mu}F(\mu)-\inf_{\mu}F(\mu) }{\eta K}\leq 2\sum_{i\in[n]}B_i\frac{\log\left(\frac{\max_{i,j}\{d_{ij}\}}{\min_{i,j}\{d_{ij}\}}\right)+\log(m) }{\eta K}. 
\end{equation}
Let $k^*\in\argmin_{0\leq k\leq K-1}\{\|\hat{\mu}^k-\mu^{k+1}\|^2\}$. When $K\geq K_0\coloneqq 2\eta\sum_{i\in[n]}B_i/(\log(\frac{\max_{i,j}\{d_{ij}\}}{\min_{i,j}\{d_{ij}\}})+\log(m))$, we have $\|\hat{\mu}^{k^*}-\mu^{k^*+1}\|\leq1$. Then, by Lemma \ref{le:suff}, for $\mu^{k^*+1}=\nabla g(\hat{\mu}^k)-\nabla g(\mu^{k+1})\in \partial F(\mu^{k+1})$,
\begin{equation}\label{eq:round_dca_2}
    |u^{k^*+1}_j|\leq (\eta+e^2q_j(\mu^{k^*+1}))\|\hat{\mu}^{k^*}-\mu^{k^*+1}\|.
\end{equation}
Combining \eqref{eq:round_dca_1} and \eqref{eq:round_dca_2}, we see that for all $j\in[m]$,
\begin{equation}\label{eq:round_dca_3}
    \left|\frac{u^{k^*+1}_j}{q_j(\mu^{k^*+1})} \right|^2\leq 2n\max_{i\in[n]}B_i\left(\frac{\sqrt{\eta}}{q_j(\mu^{k^*+1})}+\frac{e^2}{\sqrt{\eta}}\right)^2\frac{\log\left(\frac{\max_{i,j}\{d_{ij}\}}{\min_{i,j}\{d_{ij}\}}\right)+\log(m) }{K}.
\end{equation}
Let $a=\underline{\mu}-1$. Then, we have $q_j(\hat{\mu}^{k^*})\geq e^{\underline{\mu}-1}$.  By $\|\hat{\mu}^{k^*} - \mu^{k^*+1}\|\leq1$, we further have \[q_j(\mu^{k^*+1})=\frac{\sum_{i\in[n]}B_ie^{\mu^{k^*+1}_j }}{\sum_{j\in[m]}e^{\mu^{k^*+1}_j }}\geq e^{-2}q_j(\hat{\mu}^{k^*})\geq e^{\underline{\mu}-3}=\Theta\left(\frac{n}{m}\right).\]
This, together with \eqref{eq:round_dca_3}, implies that for $\eta=\Theta(\frac{n}{m})$, it suffices to set $K=\tilde{\cO}(\frac{m}{\epsilon^2})$ (by ignoring constant $K_0$) to ensure 
$|{u^{k^*+1}_j}/{q_j(\mu^{k^*+1})}|\leq\epsilon\text{ for all }j\in[m],$ i.e., $\mu^{k^*+1}$ is an $\epsilon$-CE. Hence, the rounded DCA needs $\tilde{\cO}(\frac{m}{\epsilon^2})$ iterations to find an $\epsilon$-CE. We formally state it in following proposition.
\begin{proposition}\label{pro:round_dca}
    Suppose that $a=\underline{\mu}-1$ for the round procedure and $\eta=\Theta(\frac{n}{m})$.
 Then rounded DCA finds an $\epsilon$-CE in at most $\tilde{\cO}(\frac{m}{\epsilon^2})$ iterations. Moreover, the iterates of rounded DCA also converges R-linearly to a CE.
\end{proposition}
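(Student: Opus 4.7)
The proposal is to adapt, step by step, the machinery already developed for the DCA (Lemma~\ref{le:suff}, Lemma~\ref{le:bound}, Theorem~\ref{th:linear_DCA}) and for the rounding procedure (Lemma~\ref{le:round}) to the rounded iterates. Concretely, I would first re-derive three analogues of Lemma~\ref{le:round} tailored to $\underline{\mu}$ rather than $\underline{\mu_{\delta}}$, namely (i) $\1^{\top}\hat{\mu}^k = \1^{\top}\mu^0$, (ii) $F(\hat{\mu}^k)\le F(\mu^k)$, and (iii) $q_j(\hat{\mu}^k)\ge e^{a}$ for all $j\in[m]$. The argument is identical to that of Lemma~\ref{le:round}, using Fact~\ref{fact:underlined} in place of Fact~\ref{fact:underlined2}: the key observation is that if $q_j(\mu)<e^{\underline{\mu}}$, then $\partial_j F(\mu)<0$, so raising $\mu_j$ strictly decreases $F$.

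Next, I would establish the $\tilde{\cO}(m/\epsilon^2)$ iteration complexity by combining sufficient descent with the lower bounds produced by rounding. Let $\mu^{k+1}$ solve \eqref{eq:SQP} with $\mu^k$ replaced by $\hat{\mu}^k$. By Lemma~\ref{le:suff}, $F(\mu^{k+1})-F(\hat{\mu}^k)\le -\tfrac{\eta}{2}\|\hat{\mu}^k-\mu^{k+1}\|^2$, and by item~(ii) above, $F(\hat{\mu}^{k+1})\le F(\mu^{k+1})$. Telescoping from $0$ to $K-1$ and using the global bounds of Fact~\ref{fact:F} produces
\begin{equation*}
\min_{0\le k\le K-1}\|\hat{\mu}^k-\mu^{k+1}\|^2 \le \frac{2n\max_{i}B_i\big(\log(\max_{i,j}d_{ij}/\min_{i,j}d_{ij})+\log(m)\big)}{\eta K}.
\end{equation*}
For $K\ge K_0=\Theta(\eta)$, the minimizing index $k^\star$ satisfies $\|\hat{\mu}^{k^\star}-\mu^{k^\star+1}\|\le 1$, so the tighter relative error \eqref{eq:tighter} of Lemma~\ref{le:suff} (with $\gamma=1$) applies to $u^{k^\star+1}=\nabla g(\hat{\mu}^{k^\star})-\nabla g(\mu^{k^\star+1})\in\partial F(\mu^{k^\star+1})$. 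Combined with the rounding lower bound $q_j(\hat{\mu}^{k^\star})\ge e^{\underline{\mu}-1}=\Theta(n/m)$ and the elementary transport estimate
\begin{equation*}
q_j(\mu^{k^\star+1})\ge e^{-2\|\hat{\mu}^{k^\star}-\mu^{k^\star+1}\|}\, q_j(\hat{\mu}^{k^\star})\ge e^{-2}\cdot e^{\underline{\mu}-1}=\Theta(n/m),
\end{equation*}
this yields $|u^{k^\star+1}_j/q_j(\mu^{k^\star+1})|=\tilde{\cO}(\sqrt{m/(nK)})$ for the choice $\eta=\Theta(n/m)$. Setting this $\le\epsilon$ gives $K=\tilde{\cO}(m/\epsilon^2)$; then Lemma~\ref{le:dist} turns $\mu^{k^\star+1}$ into an $\epsilon$-CE.

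Finally, R-linear iterate convergence follows by re-running the proof of Theorem~\ref{th:linear_DCA} for the rounded scheme. Item~(ii) above shows the rounded algorithm still has a global descent property, and the sufficient descent/relative error bounds of Lemma~\ref{le:suff} for each inner DCA step persist verbatim (with $\mu^k$ replaced by $\hat{\mu}^k$). Boundedness of $\{\mu^k\}$ and $\{\hat{\mu}^k\}$ follows from Lemma~\ref{le:bound} together with item~(i) and the fact that rounding only raises coordinates above a fixed threshold. Invoking Theorem~\ref{th:KL} and the abstract convergence result \cite[Theorem~2.3]{schneider2015convergence} then yields $\mu^k\to\mu^\star$, $\hat{\mu}^k\to\mu^\star$ R-linearly, and by Theorem~\ref{th:DC} the limit is an exact CE.

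\textbf{Main obstacle.} The subtle point is the control of $q_j(\mu^{k^\star+1})$: the rounding guarantees the lower bound at $\hat{\mu}^{k^\star}$, but the statistic we actually need is at $\mu^{k^\star+1}$, which is only close to $\hat{\mu}^{k^\star}$ once the sufficient-decrease bound has kicked in. Handling this requires the two-sided coupling between $\|\hat{\mu}^{k^\star}-\mu^{k^\star+1}\|\le 1$ (to keep $\mu^{k^\star+1}$ inside a neighborhood where $q_j$ changes by at most a constant factor) and the stepsize choice $\eta=\Theta(n/m)$ (to balance the relative-error coefficient $\eta+e^2 q_j(\mu^{k^\star+1})$ against the descent term). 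Getting these constants to line up so that the bound on $|u_j^{k^\star+1}/q_j(\mu^{k^\star+1})|$ scales like $\sqrt{m/(n K)}$ is the one place where the calculation is genuinely different from the DCA-only analysis.
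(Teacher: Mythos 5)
Your complexity argument matches the paper's proof essentially step for step: the three rounding properties re-derived with $\underline{\mu}$ and Fact~\ref{fact:underlined} in place of $\underline{\mu_{\delta}}$ and Fact~\ref{fact:underlined2}, the telescoped sufficient descent bound on $\min_k\|\hat{\mu}^k-\mu^{k+1}\|^2$ via Fact~\ref{fact:F}, the use of the tighter relative error \eqref{eq:tighter} with $\gamma=1$ once $\|\hat{\mu}^{k^\star}-\mu^{k^\star+1}\|\le1$, the transport estimate $q_j(\mu^{k^\star+1})\ge e^{-2}q_j(\hat{\mu}^{k^\star})\ge e^{\underline{\mu}-3}=\Theta(n/m)$, and the final invocation of Lemma~\ref{le:dist} with $\eta=\Theta(n/m)$ and $K=\tilde{\cO}(m/\epsilon^2)$. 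This part is fine and is the same route the paper takes.

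The R-linear convergence part, however, has a genuine gap. You assert that the sufficient descent and relative error conditions of Lemma~\ref{le:suff} ``persist verbatim'' for the rounded scheme and then invoke \cite[Theorem~2.3]{schneider2015convergence} directly. But the abstract K{\L} convergence framework requires both conditions to hold for one and the same sequence: sufficient decrease of the form $F(x^{k+1})\le F(x^k)-c\|x^{k+1}-x^k\|^2$ together with a subgradient at $x^{k+1}$ bounded by $\|x^{k+1}-x^k\|$. For rounded DCA the descent and relative-error quantities are expressed in $\|\hat{\mu}^k-\mu^{k+1}\|$, while the consecutive-iterate distance of either sequence you might track ($\{\mu^k\}$ or $\{\hat{\mu}^k\}$) also involves the displacement introduced by the rounding step, which is \emph{not} controlled by $\|\hat{\mu}^k-\mu^{k+1}\|$: rounding can move coordinates by an amount unrelated to the size of the preceding DCA step. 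So neither sequence satisfies the H1/H2 conditions ``verbatim,'' and the abstract theorem cannot be applied wholesale. The paper's proof resolves exactly this issue with an argument you are missing: since every limit point $\mu^*$ of the rounded iterates is stationary (via $\|\hat{\mu}^k-\mu^{k+1}\|\to0$ and the relative error), Corollary~\ref{co:mu} gives $q_j(\mu^*)\ge e^{\underline{\mu}}>e^{\underline{\mu}-1}=e^{a}$, so in a neighborhood of $\mu^*$ the rounding test is never triggered and $\hat{\mu}^k=\mu^k$; the tail of the rounded DCA is therefore a plain DCA sequence, to which \cite[Lemma~2.6]{attouch2013convergence} and then the K{\L}-exponent-$1/2$ rate apply. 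This ``rounding is eventually inactive'' step is also the reason the threshold is taken as $a=\underline{\mu}-1$ strictly below $\underline{\mu}$; without it your appeal to Theorem~\ref{th:linear_DCA} does not go through.
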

\begin{proof}
    It is left to prove the R-linear convergence of rounded DCA. We prove this through the following idea: When the iterates get close to a stationary point, the round procedure would not be performed so that the iterates of rounded DCA act like original DCA iterates, and hence possess R-linear convergence. 
    
To begin, we note that conditions $\1^{\top}\hat{\mu}^k=\1^{\top}{\mu}^0$ and $q_j(\hat{\mu}^k)\geq e^a$ ensure the boundedness of $\{\hat{\mu}^k\}_{k\geq0}$. Let $\mu^*$ be its limiting point. The sufficient descent property ensures that $\|\hat{\mu}^k-\mu^{k+1}\|\to0$. Hence, the point $\mu^*$ is also a limiting point of $\{\mu^{k}\}_{k\geq0}$. Moreover, we have $\dist(0,\partial F(\mu^{k+1})) \to0$ by relative error condition. It follows that $\mu^*$ is a stationary point and then $q_j(\mu^*)\geq e^{\underline{\mu}}$ (for all $j\in[m]$) by Corollary \ref{co:mu}. Therefore, when $\mu^k$ is near $\mu^*$, one has $q_j(\mu^k)\geq e^a$ so that the round procedure is not performed. This means $\hat{\mu}^k=\mu^k$, and that \eqref{eq:suff} and \eqref{eq:relative} hold (for $\mu^k$ near $\mu^*$). Then, \cite[Lemma 2.6]{attouch2013convergence} ensures $\{\mu^k\}_{k\geq0}$ converges to $\mu^*$, and $\mu^k=\hat{\mu}^k$ for sufficiently large $k$. Finally, the $1/2$ K{\L} exponent directly implies the R-linear convergence due to \cite[Theorem 2.3]{schneider2015convergence}.
\end{proof}
\section{Experiment Details and Additional Experiments}\label{appen:additional_exp}
\subsection{Details for Experiments in Sec. \ref{sec:numerical}}
For each instance (with fixed dimension and data type), we repeatedly do the tests $10$ times and calculate the average running time. To ensure a consistent and fair benchmark for algorithm comparison, we standardize the condition number of matrices $D$ and  $B$ (defined as the ratio of the maximum to the minimum) at $100$ through element-wise truncation. 

We use mirror descent to solve the DCA subproblems and adopt constant stepsizes for the SGR. We set parameter $a$ in the rounding procedure small enough so that the rounding is not performed, which does not affect the efficiency of SGR. We heuristically set ${\delta}=\epsilon / 1.3$ for the SGR, which is different from the suggested one in Lemma \ref{le:SP_CE}. Indeed, setting $\delta\leq \epsilon/(1.3+\log(m-1))$ is too conservative and numerical results show that ${\delta}=\epsilon / 1.3$ suffices to ensure the output to be an $\epsilon$-CE; see Sec. \ref{appen:tilde} for the validation.

\begin{table}[htbp]
\centering
\begin{tabular}{>{\bfseries}c*{4}{c}}
\toprule
\multirow{2}{*}{\textbf{Dimension}} & \multicolumn{4}{c}{\textbf{Data Type}} \\
\cmidrule(lr){2-5}
 & Uniform & Log-Normal & Exponential & Integer \\
\midrule
100 & 0.40958 & 0.49373 & 0.41779 & 0.48387 \\
200 & 0.43677 & 0.63498 & 0.49305 & 0.49333 \\
300 & 0.55131 & 0.71102 & 0.54351 & 0.45089 \\
400 & 0.58567 & 0.83008 & 0.6646 & 0.55709 \\
500 & 0.51222 & 0.88317 & 0.69181 & 0.55189 \\
600 & 0.48672 & 0.94475 & 0.66434 & 0.51886 \\
700 & 0.5338 & 0.86891 & 0.72712 & 0.58509 \\
800 & 0.54545 & 1.0289 & 0.76345 & 0.63924 \\
900 & 0.52864 & 0.96698 & 0.77027 & 0.55245 \\
1000 & 0.57917 & 1.0085 & 0.75473 & 0.58537 \\
\bottomrule
\end{tabular}
\caption{Ratio $\frac{{\epsilon}^{\prime}}{\epsilon}$ in Balanced Scenarios}
\label{tb:testing_ratio}
\end{table}
\begin{table}[htbp]
\centering
\begin{tabular}{>{\bfseries}c*{4}{c}}
\toprule
\multirow{2}{*}{\textbf{Dimension}} & \multicolumn{4}{c}{\textbf{Data Type}} \\
\cmidrule(lr){2-5}
 & Uniform & Log-Normal & Exponential & Integer \\
\midrule
500*50 & 0.57196 & 0.91396 & 0.61636 & 0.50733\\
600*50 & 0.48784 & 0.82632 & 0.67539 & 0.51698 \\
700*50 & 0.49921 & 0.84899 & 0.68331 & 0.50150 \\
800*50 & 0.48287 & 0.90268 & 0.62898 & 0.53920 \\
900*50 & 0.52279 & 0.81249 & 0.61100 & 0.48012 \\
1000*50 & 0.53337 & 0.93064 & 0.69852 & 0.44805 \\
\bottomrule
\end{tabular}
\caption{Ratio $\frac{{\epsilon}^{\prime}}{\epsilon}$ in Imbalanced Scenarios}
\label{tb:testing_ratio_imbalance}
\end{table}
\subsection{Validation for Choice of $\delta$}\label{appen:tilde}
We define 
\begin{equation}
   r(\mu)=\max _{i \in[n]}\left\{\frac{\sum_{j \in[m]}\left(\frac{p_j}{d_{i j}}\right)^{\frac{1}{\delta}-1} \max _{j \in [m]}\left\{\frac{p_j}{d_{i j}}\right\}}{\sum_{j \in[m]}\left(\frac{p_j}{d_{i j}}\right)^{\frac{1}{\delta}}}\right\}\text{ with }p_j=\sum_{i\in[n]}B_i\frac{e^{\mu_j}}{\sum_{[j\in[m]}e^{\mu_j} }.
\end{equation}
Recall the proof of Lemma \ref{le:SP_CE}. By \eqref{eq:dijxj} and \eqref{eq:dijxj2}, to ensure the last iterate $\mu^{K}$ to be an $\epsilon$-CE, it suffices to have 
$\epsilon^{\prime}\coloneqq1-\frac1{r(\mu^K)}\leq\epsilon$, or equivalently,
\begin{equation}\label{eq:check}
    \frac{\epsilon^{\prime}}{\epsilon} = \frac{1-\frac{1}{r(\mu^K)}}{\epsilon}\leq1.
\end{equation} 
We then check \eqref{eq:check} in our experiments, where $\delta$ is set to be $\epsilon/1.3$. For $n=m$ case, the ratio $\frac{\epsilon^{\prime}}{\epsilon}$ is reported in Table \ref{tb:testing_ratio} for forty tests. It can be seen that the ratio $\frac{\epsilon^{\prime}}{\epsilon}$ is almost always smaller than $1$ except for a slight excess in two tests.  For the imbalanced dimension case, the ratio $\frac{\epsilon^{\prime}}{\epsilon}$ is reported in Table \ref{tb:testing_ratio_imbalance}, where $\frac{\epsilon^{\prime}}{\epsilon}<1$ always holds. These results validate the rationale of the choice $\delta=\epsilon/1.3$.
%
%
%
%
\end{document}